\theoremstyle{plain}
\newtheorem{thm}{Theorem}[section]
\newtheorem{lem}[thm]{Lemma}
\newtheorem{defn}[thm]{Definition}
\newtheorem{prop}[thm]{Proposition}
\newtheorem{cor}[thm]{Corollary}
\newtheorem{rem}[thm]{Remark}
\newcommand{\C}{{\mathbb C}}
\newcommand{\R}{{\mathbb R}}
\newcommand{\B}{{\mathbb B}}
\newcommand{\bB}{{\mathbb S}}
\newcommand{\T}{{\mathcal T}}
\newcommand{\PP}{\mathbb{P}}
\newcommand{\PN}{{\mathcal P}}
\newcommand{\e}{\varepsilon}
\newcommand{\p}{\partial}
\newcommand{\z}{\zeta}
\title[On multipliers for Hardy-Sobolev spaces]{On multipliers for Hardy-Sobolev spaces  \\and holomorphic potentials}
\author{Carme Cascante, Joan F\`abrega and Joaqu\'\i n M. Ortega}
\address{Dept.\ Matem\`atica Aplicada i An\`alisi, Universitat  de Barcelona, Gran Via 585, 08071 Barcelona, Spain}
\email{cascante@ub.edu,\quad joan$_{-}$fabrega@ub.edu,\quad ortega@ub.edu}
\keywords{Multipliers; Hardy-Sobolev spaces; nonlinear holomorphic potentials}
\subjclass[2010]{32A37, 47B32, 31C15}
\date{\today}
\thanks{Partially supported by  DGICYT Grant MTM2011-27932-C02-01  and DURSI Grant 2009SGR 1303.}
\begin{document}

\begin{abstract} 
We study the action of some generalized integral operators of Bergman type on pointwise multipliers of holomorphic Triebel-Lizorkin spaces. 
We  construct  nontrivial examples of pointwise multipliers in Hardy-Sobolev spaces and give applications of all these results.
\end{abstract}

\maketitle

\section{Introduction}\label{sec:intro} 
The main object of this paper is the study of the space of the pointwise multipliers of Hardy-Sobolev spaces in the unit ball $\B$ of $\C^n$. In particular, we study the action of some integral operators that generalize the Bergman type operators, on
these spaces of mutipliers and we construct interesting nontrivial examples of such multipliers.

We recall that 
if $1\le p<\infty$ and $s\in \R$, then 
the Hardy-Sobolev space $H^p_s$ consists of the holomorphic functions on $\B$ such that 
 if ${\displaystyle f(z)=\sum_k f_k(z)}$ is its homogeneous polynomial expansion, and   the fractional radial derivative is defined by $$ {\displaystyle (1+R)^s f(z):=\sum_k (1+k)^s f_k(z)}, $$ 
 then
$$
\|f\|_{H^p_s}:=\|(1+R)^{s}f\|_{H^p}<\infty.
$$ 
It is well known that any pointwise multiplier of $H_s^p$ is a function in $H^\infty$. If $s\leq 0$, the space of multipliers coincide with $H^\infty$. If $s>n/p$, then the space $H_s^p$ is a multiplicative algebra and, in consequence, it coincides with the algebra of its multipliers. In what follows, we will assume that $0<s\leq n/p$.

If $1<p$ and $0<s<n$, a function $f$ belongs to $H_s^p$ if and only if it can be represented as $\displaystyle{f(z)={\mathcal C}_s(h)(z):= \int_{\bB} \frac{h(\zeta)}{(1-z\overline{\zeta})^{n-s}}}d\sigma(\zeta)
$, 
for some $h\in L^p(\bB)$, where $d\sigma$ is the normalized Lebesgue measure on $\bB$. 
The functions $h$ can be restricted to be boundary values of $H^p$ functions. In this case, ${\mathcal C}_s$ is a bijective operator from $H^p$ to $H_s^p$.

 The first motivation of this paper is, under the assumption of the boundedness of  $g$, to obtain a description of the functions $h\in H^p$ such that $g={\mathcal C}_s(h)$ is  a pointwise multiplier for $H_s^p$.
For such reason we construct an inverse of the operator ${\mathcal C}_s$, which will be given in terms of sums of a generalization of integral operators of Bergman type of the form 
$$
\PN^{N,M}(f)(z)=c_N\int_{\B} \frac{(1-|w|^2)^{N-1}}{(1-z\overline{w})^{n+M}}f(w)d\nu(w),
\quad N>0,\,M>-n.
$$ 
Here   $d\nu$  is the normalized Lebesgue measure on the unit ball $\B$ and $c_N$ is a constant such that $\PN^{N,N}$ is a reproducing kernel for holomorphic functions on $\overline{\B}$. These operators are extended to the case $N=0$ and $M>-n$ by $\PN^{0,M}={\mathcal C}_{-M}$.

These operators, for the particular case where $M=N+k$ with $k$ a positive integer, coincide with differential operators of order $k$ on the holomorphic functions. Since the technical difficulties to handle the space of multipliers of $H_s^p$ are  similar to the ones existing in the context of multipliers of holomorphic Triebel-Lizorkin spaces $F_s^{p,q}$, which will be introduced in Section \ref{sec:preliminaries}, we have chosen to work in this most general framework that allows, for instance, to obtain results simultaneously on Hardy-Sobolev spaces $H_s^p$ ($F_s^{p,q}$ with $q=2$) and on holomorphic Besov spaces $B_s^p$ ($F_s^{p,q}$ with $q=p$).

Given  $X$ and $Y$ Banach spaces of holomorphic functions, we denote by $Mult(X\to Y)$ the space of pointwise multipliers from $X$ to $Y$. If $X=Y$, then we simply write $Mult(X)$.

Our first result deals with the action of the operator $\PN^{N,M}$ on $Mult(F^{p,q}_s\to F^{p,q}_{s'})$, when $s'<s$. In the case $s=s'$, the space  $Mult(F^{p,q}_s)$  must be substituted by the space   
 $$
CF^{p,q}_s:=\left\{g\in F^{p,q}_s:\,\|g\|_{CF^{p,q}_s}<\infty\right\},
$$
where
$
\|g\|_{CF^{p,q}_s}:=\left\|(1+R)^{k_s} g\right\|_{Mult(F^{p,q}_s\to F^{p,q}_{s-k_s})},
$
$k_s=\max([s],0)+1$ and $[s]$ is the integer part of $s$.

Clearly,
$g\in CF^{p,q}_s$ if and only if $d\mu_g(z):= |(1+R)^{k_s} g(z)|^q(1-|z|^2)^{(k_s-s)q} d\nu(z)$ is a Carleson measure for $F^{p,q}_s$
 in the sense $F^{p,q}_s\subset T^{p,q}(\mu_g)$.
 
 Here $T^{p,q}(\mu_g)$ is a tent space that will be defined in Section \ref{sec:preliminaries}. 
 
A standard technique to study differential operators on a space of multipliers, uses  Leibnitz's formula $f Rg= R(fg)-gRf$. Here, in this more general context on the study of the action of the operators $\PN^{N,M}$ on the spaces of multipliers, we prove a generalized Leibnitz's type formula for $\PN^{N,M}$, which has a complementary error term. This formula allows us to prove the following result:

\begin{thm}\label{thm:bijectMF}
Let $1< p,q<\infty$, $0\le s\le n/p$, $N\ge 0$, $M>-n$ and $s'<  s$, satisfying either 
$-N<s'$ or $N= s'=0$ and $q\le 2$.

Then we have:
\begin{enumerate}
	\item \label{item:bijectMF1} If $N-M< s-s'$, then $\PN^{N,M}$ is a bijective map from 
$$Mult(F^{p,q}_s\to F^{p,q}_{s'}) \quad\text{to}\quad  
Mult(F^{p,q}_s\to F^{p,q}_{s'+N-M}).$$ 

 \item \label{item:bijectMF3} If $N-M<0$, then $\PN^{N,M}$ is a bijective map from 
$$CF^{p,q}_s\quad\text{to}\quad Mult(F^{p,q}_s\to F^{p,q}_{s+N-M}).$$

 \item \label{item:bijectMF2} If $N-M>0$, then $\PN^{N,M}$ is a bijective map from 
$$
Mult(F^{p,q}_s\to F^{p,q}_{s-N+M})\quad\text{to}\quad CF^{p,q}_s.
$$
\end{enumerate}
\end{thm}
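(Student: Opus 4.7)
The plan is to combine two principal ingredients. First, the generalized Leibnitz-type formula for $\PN^{N,M}$ promised in the introduction, of the schematic shape
$$
\PN^{N,M}(fg)(z)=f(z)\,\PN^{N,M}(g)(z)+E_{N,M}(f,g)(z),
$$
where $E_{N,M}$ is an explicit bilinear integral operator built from the weight $(1-|w|^2)^{N-1}$ and the kernel $(1-z\bar w)^{-(n+M)}$. Second, the standard fact that under the hypothesis $-N<s'$ (or the boundary case $N=s'=0$, $q\le 2$), $\PN^{N,M}$ is an isomorphism from $F^{p,q}_t$ onto $F^{p,q}_{t+N-M}$; in other words, on the Triebel--Lizorkin scale it realizes a fractional smoothness lift by $N-M$. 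These two tools sit in direct analogy with the classical identity $fRg=R(fg)-gRf$ together with the fact that $R$ shifts the scale by one unit, but the error term is no longer pointwise.

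To prove (i), fix $g\in Mult(F^{p,q}_s\to F^{p,q}_{s'})$ and $f\in F^{p,q}_s$. Since $g$ is a multiplier, $fg\in F^{p,q}_{s'}$ with norm controlled by $\|g\|_{Mult}\|f\|_{F^{p,q}_s}$, so the isomorphism gives $\PN^{N,M}(fg)\in F^{p,q}_{s'+N-M}$. Rewriting $f\cdot\PN^{N,M}(g)=\PN^{N,M}(fg)-E_{N,M}(f,g)$, the remaining task is the bilinear estimate
$$
\|E_{N,M}(f,g)\|_{F^{p,q}_{s'+N-M}}\lesssim \|g\|_{Mult(F^{p,q}_s\to F^{p,q}_{s'})}\|f\|_{F^{p,q}_s},
$$
which, since $E_{N,M}$ integrates $fg$ against a weighted Bergman-type kernel, can be obtained by inserting the multiplier inequality under the integral, absorbing the surplus smoothness $N-M$ into the weight $(1-|w|^2)^{N-1}$, and closing with a Forelli--Rudin type bound. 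Bijectivity then follows from the same formula applied to the Triebel--Lizorkin inverse of $\PN^{N,M}$: given $h$ in the target multiplier space, the preimage $g$ produced by that inverse lies in $F^{p,q}_{s'}$, and the reverse Leibnitz identity shows that $fg\in F^{p,q}_{s'}$ for every $f\in F^{p,q}_s$.

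Parts (ii) and (iii) follow the same scheme, except that the shift $N-M$ now crosses the threshold at which the natural multiplier space would coincide with $Mult(F^{p,q}_s)$; the correct replacement is $CF^{p,q}_s$, characterized in the introduction by the condition $(1+R)^{k_s}g\in Mult(F^{p,q}_s\to F^{p,q}_{s-k_s})$, equivalently by a Carleson embedding into $T^{p,q}(\mu_g)$. For (iii) one starts with $g\in CF^{p,q}_s$, factors $(1+R)^{k_s}$ through $\PN^{N,M}$ by composing integral operators, and applies part (i) with the multiplier $(1+R)^{k_s}g$ in place of $g$; the Leibnitz formula converts the Carleson condition on $g$ into the multiplier property for $\PN^{N,M}(g)$ at the shifted smoothness. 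Part (ii) is the mirror statement and is proved by reversing this route.

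The hardest step is the estimate for $E_{N,M}(f,g)$: unlike the classical $fRg=R(fg)-gRf$, this is a genuine bilinear integral operator and establishing its sharp mapping requires a Schur-type argument combined with Carleson-measure control for the multiplier $g$. A secondary difficulty is the boundary case $N=s'=0$ with $q\le 2$: here $\PN^{N,M}$ degenerates into $\mathcal{C}_{-M}$, the Triebel--Lizorkin isomorphism must be replaced by a tent-space duality argument, and the restriction $q\le 2$ enters precisely so that the $T^{p,q}$-duality and the corresponding area-function estimates still close.
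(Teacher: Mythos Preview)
Your overall architecture is right: a Leibnitz-type identity for $\PN^{N,M}$ together with the fact that $\PN^{N,M}$ realizes a smoothness shift by $N-M$ on the $F^{p,q}$ scale. Parts (ii) and (iii) do indeed reduce to (i) by factoring $R^k_{n+N}$ (equivalently $(1+R)^{k_s}$) through $\PN^{N,M}$, which is how the paper proceeds. The gap is in your treatment of the error term in (i).

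You write the formula as $\PN^{N,M}(fg)=f\,\PN^{N,M}(g)+E_{N,M}(f,g)$ and then propose to estimate $E_{N,M}$ by ``inserting the multiplier inequality under the integral'' and a Schur/Carleson argument. This is where the plan breaks down. Explicitly,
\[
E_{N,M}(f,g)(z)=c_N\int_\B g(w)\bigl(f(w)-f(z)\bigr)\frac{(1-|w|^2)^{N-1}}{(1-z\bar w)^{n+M}}\,d\nu(w),
\]
so the integrand is not $fg$ against a kernel; it contains the oscillation $f(w)-f(z)$. The multiplier inequality for $g$ is a global $F^{p,q}_s\to F^{p,q}_{s'}$ bound and cannot be ``inserted'' pointwise. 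Nor is $g\in Mult(F^{p,q}_s\to F^{p,q}_{s'})$ a Carleson condition when $s'<s$; Proposition~2.7(i) only gives the growth bound $g\in B^\infty_{s'-s}$, and a single Schur estimate using that growth does not handle the difference $f(w)-f(z)$ at the required regularity.

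What the paper does instead is substantially more structured. One Taylor-expands $f(z)$ about $w$ to order $k>s$ and uses an integration-by-parts identity (Proposition~\ref{prop:intparts} / Corollary~\ref{cor:intparts}) that converts each factor $(z-w)^\alpha$ against the Bergman kernel into $w^\alpha$ against a kernel with \emph{reduced} power $(1-z\bar w)^{-(n+M-|\alpha|)}$. After an additional integration by parts in $w$ to raise $N$ to $N+J$, this produces the expansion of Theorem~\ref{thm:mf}:
\[
f\,\PN^{N,M}(g)=\sum a_i\,f\,\PN^{N+J,M+i}(g)
+\sum a_{i,j}\,\PN^{N+J,M+i-j}\bigl(g\,d^jf(R,\dots,R)\bigr)
+Q^{N,M,k}(f,g).
\]
Now the multiplier property is applied in the \emph{second} group of terms, where one has $\PN^{\tilde N,\tilde M}(g\cdot d^jf)$ with $d^jf\in F^{p,q}_{s-j}$, so $g\cdot d^jf\in F^{p,q}_{s'-j}$ by Proposition~2.7(iv), and then $\PN^{\tilde N,\tilde M}$ maps into $F^{p,q}_{s'+N-M}$. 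The first group of terms has $\PN^{N+J,M+i}(g)\in B^\infty_k$ for $J$ large, hence is itself a multiplier of $F^{p,q}_s$. The remainder $Q^{N,M,k}$ is then estimated in $F^{p,q}_{s'+N-M}$ using \emph{only} $\|g\|_{B^\infty_{s'-s}}$ and a genuine Forelli--Rudin/tent-space bound (Lemma~\ref{lem:PPF}); no Carleson control of $g$ enters there. In short: your single ``$j=0$'' term $\PN^{N,M}(fg)$ is only the first piece of the correct expansion, and without the higher Taylor terms and the $(z-w)^\alpha\mapsto w^\alpha$ integration by parts you cannot close the estimate on $E_{N,M}$.
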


Using this result, it can be obtained descriptions for $Mult(F_s^{p,q})$. In particular, we have that
$Mult(F_s^{p,q})=H^\infty \cap CF_s^{p,q}$ and we can answer our first question.

\begin{thm}\label{thm:bijectMH}
Let $1<p<\infty$ and $0< s\le n/p$. Then the following assertions are equivalent.

\begin{enumerate}
	\item\label{item:bijectMH1}$g\in Mult(H^{p}_s)$.
	\item\label{item:bijectMH2} $g\in H^\infty$ and there exists $h\in Mult(H^{p}_s\to H^p)$ such that 
$g=\mathcal{C}_s(h)$. 
That is, $g\in H^\infty$ and there exists $h\in   H^p$ such that 
$g=\mathcal{C}_s(h)$ and $|h|^pd\sigma$ is a Carleson measure for $H_s^p$ on $\bB$.
   \item\label{item:bijectMH4} $g\in H^\infty$ and $\PN^{0,s}(g)\in Mult(H^{p}_s\to H^p)$.
\end{enumerate}
\end{thm}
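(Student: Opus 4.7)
The plan is to deduce Theorem~\ref{thm:bijectMH} from Theorem~\ref{thm:bijectMF}, applied with $q=2$ so that $F^{p,2}_s=H^p_s$, together with the identification $Mult(F^{p,q}_s)=H^\infty\cap CF^{p,q}_s$ mentioned just after Theorem~\ref{thm:bijectMF}. Write $CH^p_s:=CF^{p,2}_s$ for brevity. The essential step is to specialize $\mathcal{P}^{N,M}$ to $\mathcal{C}_{\pm s}$ and read off the corresponding bijections.

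Concretely, I would apply Theorem~\ref{thm:bijectMF}\,(iii) with $(N,M)=(0,-s)$, so that $\mathcal{P}^{N,M}=\mathcal{C}_s$ and $s-N+M=0$. The parameter constraints are straightforward to check: $M=-s>-n$ because $s\le n/p<n$ (using $p>1$); the auxiliary index $s':=s-N+M=0$ satisfies $s'<s$ because $s>0$; and since $-N=0=s'$, the hypothesis ``$-N<s'$ or $N=s'=0,\,q\le 2$'' is satisfied through the second alternative, which is exactly where $q=2$ is used. This yields the bijection
$$
\mathcal{C}_s\colon Mult(H^p_s\to H^p)\longrightarrow CH^p_s.
$$
Applying Theorem~\ref{thm:bijectMF}\,(ii) with $(N,M)=(0,s)$---whose hypotheses are verified analogously, now with $s'=s+N-M=0$---produces the reverse bijection $\mathcal{P}^{0,s}=\mathcal{C}_{-s}\colon CH^p_s\to Mult(H^p_s\to H^p)$. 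Since the two maps go between the same pair of spaces in opposite directions, $\mathcal{P}^{0,s}$ inverts $\mathcal{C}_s$ in this context.

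With this in place, (i)~$\Leftrightarrow$~(ii) is essentially bookkeeping. By the intro identity, $g\in Mult(H^p_s)$ is equivalent to $g\in H^\infty\cap CH^p_s$. Using the bijection $\mathcal{C}_s\colon H^p\to H^p_s$ recalled in the introduction, write $g=\mathcal{C}_s(h)$ with $h\in H^p$ uniquely determined; the first bijection above then gives $g\in CH^p_s\Leftrightarrow h\in Mult(H^p_s\to H^p)$, which is the first formulation of (ii). The Carleson reformulation follows from the identity $\|hf\|_{H^p}^p=\int_{\bB}|h|^p|f|^p\,d\sigma$ on boundary values: $h\in Mult(H^p_s\to H^p)$ is precisely the continuous embedding $H^p_s\hookrightarrow L^p(|h|^p d\sigma)$, and the membership $h\in H^p$ is automatic by testing the multiplier on $f\equiv 1\in H^p_s$. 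Finally, (ii)~$\Leftrightarrow$~(iii) is immediate from the second bijection: since $\mathcal{P}^{0,s}$ inverts $\mathcal{C}_s$, the relation $g=\mathcal{C}_s(h)$ is equivalent to $h=\mathcal{P}^{0,s}(g)$, so (iii) merely restates (ii).

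The only real obstacle, as I see it, is the careful verification of the parameter regime of Theorem~\ref{thm:bijectMF} in the degenerate case $(N,s')=(0,0)$, which is admissible precisely because $q=2$; this is the one point in the argument where the Hardy-Sobolev hypothesis is genuinely used. Once the two bijections of the second paragraph are secured, the rest of the proof is a direct translation of the problem through the Cauchy-type isomorphism $\mathcal{C}_s\colon H^p\to H^p_s$.
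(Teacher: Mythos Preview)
Your overall approach matches the paper's: both rely on Theorem~\ref{thm:bijectMF} with $q=2$, $N=0$, $M=\pm s$, combined with the identification $Mult(H^p_s)=H^\infty\cap CH^p_s$. Your derivation of (i)$\Leftrightarrow$(ii) via the bijection $\mathcal{C}_s\colon Mult(H^p_s\to H^p)\to CH^p_s$ is correct, and so is the verification of the parameter constraints in Theorem~\ref{thm:bijectMF}.

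There is, however, a genuine gap in your argument for (ii)$\Leftrightarrow$(iii). You write that ``since the two maps go between the same pair of spaces in opposite directions, $\PN^{0,s}$ inverts $\mathcal{C}_s$ in this context,'' and then conclude that $g=\mathcal{C}_s(h)$ is equivalent to $h=\PN^{0,s}(g)$. But two bijections $A\to B$ and $B\to A$ need not be mutual inverses, and in fact $\PN^{0,s}\circ\mathcal{C}_s$ is \emph{not} the identity on $H^p$: already for $n=1$, a direct computation on the monomial $h(\zeta)=\zeta$ gives $\mathcal{C}_s(\zeta)=(1-s)z$ and then $\PN^{0,s}((1-s)z)=(1-s)(1+s)z=(1-s^2)z\neq z$. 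The actual inverse of $\mathcal{C}_s=\PN^{0,-s}$ on holomorphic functions is the operator $\T^{0,-s}$ described in Proposition~\ref{prop:PNMbij}, which is a nontrivial linear combination of operators $\PN^{-s+l,j}$ and not $\PN^{0,s}$.

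The fix is easy and uses only ingredients you already have: close the cycle through (i) instead. From your second bijection $\PN^{0,s}\colon CH^p_s\to Mult(H^p_s\to H^p)$ together with the injectivity of $\PN^{0,s}$ on the ambient space $H^p_s\supset CH^p_s$ (Proposition~\ref{prop:PNMbij}), one sees that for $g\in H^\infty\subset H^p_s$ the condition $\PN^{0,s}(g)\in Mult(H^p_s\to H^p)$ forces $g\in CH^p_s$, and conversely. Combined with $Mult(H^p_s)=H^\infty\cap CH^p_s$ this gives (i)$\Leftrightarrow$(iii) directly; this is exactly how the paper argues it, via Theorem~\ref{thm:charmult}.
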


In the particular case that $0<n-sp<1$, we  show that in condition \eqref{item:bijectMH2}, we can substitute $h\in H^p$ by $h \in L^p(\bB)$. Also, it is worth to recall that, in this case, the Carleson measures $|h|^pd\sigma$ on $\bB$, can be characterized in terms of nonisotropic capacities.

A second focus of interest of this paper comes out from a unpublished work of \cite{boe} that deals with Bessel real potential spaces $L_{s,p}$. It is proved there that for the nonlinear potentials of positive measures, it is enough to impose the boundedness to assure that the function is a pointwise multiplier of $L_{s,p}$.

In our context of holomorphic functions and using completely different techniques, we describe different nontrivial examples of multipliers, which are summarized in the following theorems.

In the first one,    $\mathcal{I}_s$ denotes the nonisotropic Riesz operator given by
$$\mathcal{I}_s(\varphi)(z):=\int_\bB \frac{\varphi(\z)}{|1-z\overline \z|^{n-s}}d\sigma(\z).$$

\begin{thm}\label{prop:multiplierscasp=2}
Let $1<p<\infty$, $0<s<n/p$ and $\mu$ a finite positive Borel measure on $\bB$. We then have:
\begin{enumerate}
\item \label{item:multiplierscasp=21} 
 Assume that $\mathcal{I}_s(\mathcal{I}_s(\mu)^{p'-1})$ is bounded on $\bB$. Then ${\mathcal C}_s(\mathcal{I}_s(\mu)^{p'-1})$ is a multiplier for $H_s^p$.
\item \label{item:multiplierscasp=22}
Let $p=2$, and in addition, assume that $0<n-2s<1$. If  ${\mathcal C}_s({\mathcal C}_s(\mu))$ (respectively ${\mathcal C}_{2s}(\mu)$) is bounded, then ${\mathcal C}_s({\mathcal C}_s(\mu))$ (respectively  ${\mathcal C}_{2s}(\mu)$) is a multiplier for $H_s^2$.
\end{enumerate}
\end{thm}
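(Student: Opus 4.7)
The plan is to reduce both claims to Theorem~\ref{thm:bijectMH}, which characterizes pointwise multipliers of $H_s^p$ as bounded functions of the form $g=\mathcal{C}_s(h)$ with $|h|^p\,d\sigma$ a Carleson measure for $H_s^p$ on $\bB$. The key auxiliary input throughout is a nonisotropic Kerman--Sawyer/Maz'ya-type characterization of such Carleson measures in terms of nonlinear holomorphic potentials, which I expect the paper to have established as a prior lemma.

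For part~\eqref{item:multiplierscasp=21}, the natural choice is $h:=\mathcal{I}_s(\mu)^{p'-1}$, so that $g=\mathcal{C}_s(h)$. Then $|g(z)|\le \mathcal{I}_s(h)(z)=\mathcal{I}_s(\mathcal{I}_s(\mu)^{p'-1})(z)\le C$ by hypothesis, giving $g\in H^\infty$. Since $(p'-1)p=p'$, the candidate Carleson measure is $|h|^p\,d\sigma=\mathcal{I}_s(\mu)^{p'}\,d\sigma$, and by the Kerman--Sawyer characterization alluded to above, this measure is Carleson for $H_s^p$ if and only if---via duality and a Wolff-type inequality---the nonlinear potential $\mathcal{I}_s(\mathcal{I}_s(\mu)^{p'-1})$ is pointwise bounded, which is precisely the hypothesis. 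To fit the holomorphic form of Theorem~\ref{thm:bijectMH}, replace $h$ by its Szeg\H{o} projection $P_+h\in H^p$: the representation $\mathcal{C}_s(h)=\mathcal{C}_s(P_+h)$ survives because the kernel $(1-z\overline\zeta)^{s-n}$ is anti-holomorphic in $\zeta$, and the Carleson condition transfers under this projection.

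For part~\eqref{item:multiplierscasp=22}, specialize to $p=p'=2$ and $0<n-2s<1$. The assumption $0<n-sp<1$ activates the $L^p(\bB)$-version of Theorem~\ref{thm:bijectMH}, so $h$ need not be holomorphic. For $\mathcal{C}_{2s}(\mu)$: in this range $\arg(1-z\overline\zeta)^{2s-n}\in(-(n-2s)\pi/2,(n-2s)\pi/2)\subset(-\pi/2,\pi/2)$, so $|\mathcal{C}_{2s}(\mu)|\gtrsim \mathcal{I}_{2s}(\mu)$, and the Forelli--Rudin estimate yields $\mathcal{I}_{2s}(\mu)\sim \mathcal{I}_s(\mathcal{I}_s(\mu))$; thus boundedness of $\mathcal{C}_{2s}(\mu)$ reduces to the hypothesis of part~\eqref{item:multiplierscasp=21} with $p=2$, where $p'-1=1$. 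For $\mathcal{C}_s(\mathcal{C}_s(\mu))$: Fubini combined with the identity $\int(1-z\overline\zeta)^{s-n}d\mu(z)=\overline{\mathcal{C}_s(\mu)(\zeta)}$ yields $\int \mathcal{C}_s(\mathcal{C}_s(\mu))\,d\mu=\|\mathcal{C}_s(\mu)\|_{H^2}^2$, so the hypothesis forces $\mathcal{C}_s(\mu)\in H^2$, providing the representation $\mathcal{C}_s(\mathcal{C}_s(\mu))=\mathcal{C}_s(h)$ with $h=\mathcal{C}_s(\mu)\in H^2$. One then verifies that $|\mathcal{C}_s(\mu)|^2\,d\sigma$ is Carleson for $H_s^2$ via the complex-valued analogue of the Kerman--Sawyer equivalence, where the pointwise bound on $\mathcal{C}_s(\mathcal{C}_s(\mu))$ now plays the role of the nonlinear potential condition.

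The main technical obstacle I anticipate is the Carleson characterization at the heart of both parts: the equivalence between $\mathcal{I}_s(\mu)^{p'}\,d\sigma$ (respectively $|\mathcal{C}_s(\mu)|^2\,d\sigma$) being Carleson for $H_s^p$ (resp.\ $H_s^2$) and the pointwise boundedness of the corresponding nonlinear holomorphic potential $\mathcal{I}_s(\mathcal{I}_s(\mu)^{p'-1})$ (resp.\ $\mathcal{C}_s(\mathcal{C}_s(\mu))$). This relies on a nonisotropic Wolff inequality on $\bB$ together with a Sawyer-type testing argument adapted to the nonisotropic Riesz kernel $|1-z\overline\zeta|^{s-n}$ and its holomorphic variant $(1-z\overline\zeta)^{s-n}$.
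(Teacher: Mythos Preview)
Your outline for part \eqref{item:multiplierscasp=21} is close to the paper's: set $h=\mathcal{I}_s(\mu)^{p'-1}$, get $g=\mathcal{C}_s(h)\in H^\infty$ by domination, and show $|h|^p\,d\sigma=\mathcal{I}_s(\mu)^{p'}d\sigma$ is a trace measure. Two points of divergence. First, the paper does not invoke an ``iff'' Kerman--Sawyer statement; it argues that boundedness of $\mathcal{I}_s(\mathcal{I}_s(\mu)^{p'-1})$ forces $\mathcal{W}_{s,p}(\mu)$ bounded, whence $\mu$ itself is a trace measure (because $d\mu/\mathcal{W}_{s,p}(\mu)^{p-1}$ is always one), and then applies the Maz'ya--Verbitsky equivalence ($\mu$ trace $\Leftrightarrow$ $\mathcal{I}_s(\mu)^{p'}d\sigma$ trace). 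Second, your line ``the Carleson condition transfers under this projection'' is exactly the nontrivial step: the paper does it via the $A_1$-weight stability criterion for trace measures (if $\int|h'|^p w\lesssim\int|h|^p w$ for every $w\in A_1$ then $|h'|^p d\sigma$ inherits the trace property), using boundedness of the Cauchy transform on $L^p(w)$ for $w\in A_1$.

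Part \eqref{item:multiplierscasp=22} has genuine gaps. For $\mathcal{C}_{2s}(\mu)$: reducing to the hypothesis of part \eqref{item:multiplierscasp=21} with $p=2$ only tells you that $\mathcal{C}_s(\mathcal{I}_s(\mu))$ is a multiplier---a different holomorphic function from $\mathcal{C}_{2s}(\mu)$; you never exhibit $h$ with $\mathcal{C}_{2s}(\mu)=\mathcal{C}_s(h)$ and $|h|^2 d\sigma$ Carleson. For $\mathcal{C}_s(\mathcal{C}_s(\mu))$: the ``complex-valued analogue of the Kerman--Sawyer equivalence'' you appeal to---deducing directly that $|\mathcal{C}_s(\mu)|^2 d\sigma$ is Carleson from a pointwise bound on $\mathcal{C}_s(\mathcal{C}_s(\mu))$---is not a standard result, and cancellation in the holomorphic kernel makes it unclear how to prove it. The paper avoids both difficulties by first using a kernel identity $\mathcal{C}_s\circ\mathcal{C}_s=c\,\mathcal{C}_{2s}+\mathcal{L}$ with $\mathcal{L}$ bounded, so that boundedness of $\mathcal{C}_s(\mathcal{C}_s(\mu))$ forces $\mathcal{C}_{2s}(\mu)$ bounded; then, crucially using $0<n-2s<1$, it has $|\mathcal{C}_{2s}(\mu)|\approx\mathcal{I}_{2s}(\mu)\approx\mathcal{I}_s(\mathcal{I}_s(\mu))$, reducing everything to the positive setting, where $|\mathcal{C}_s(\mu)|^2\le\mathcal{I}_s(\mu)^2$ and the latter is trace by the argument of part \eqref{item:multiplierscasp=21}. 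For $\mathcal{C}_{2s}(\mu)$ itself the paper does not pass through Theorem \ref{thm:bijectMH} at all, but estimates $(1+R)^k\mathcal{C}_{2s}(\mu)$ directly by a Wolff-type integral and runs the capacitary trace-measure argument from the proof of Theorem \ref{thm:multpot}.
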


 We also study the case  $p\neq 2$, and prove a similar result of the second statement of the above theorem, for the nonlinear potentials of positive measures introduced by \cite{cohnverbitsky} ${\mathcal U}_{s,p,\lambda}(\mu)$ and ${\mathcal V}_{s,p,\lambda}(\mu)$, which will be defined precisely in Section \ref{sec:multholpot}. In these cases, the boundedness is again enough to ensure the fact of being a pointwise multiplier for $H_s^p$. 
 
 \begin{thm}\label{thm:multpot}
Let $1<p<\infty$, $0<n-sp< \lambda<1$ and $\mu$ a finite positive Borel measure on $\bB$. Assume that either
the holomorphic potential ${\mathcal U}_{s,p,\lambda}(\mu)$ is bounded if $p< 2$ or
 the holomorphic potential ${\mathcal V}_{s,p,\lambda}(\mu)$ is bounded if $p\geq 2$.

We then have that if $p< 2$ the function ${\mathcal U}_{s,p,\lambda}(\mu)$ is a multiplier for $H_s^p$ and if $p\geq 2$ the function ${\mathcal V}_{s,p,\lambda}(\mu)$ is a multiplier for $H_s^p$.
\end{thm}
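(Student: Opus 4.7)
The plan is to apply the equivalence $(\ref{item:bijectMH1})\Leftrightarrow(\ref{item:bijectMH2})$ in Theorem \ref{thm:bijectMH}. Since the holomorphic potential $g:=\mathcal{U}_{s,p,\lambda}(\mu)$ (for $p<2$) or $g:=\mathcal{V}_{s,p,\lambda}(\mu)$ (for $p\geq 2$) belongs to $H^\infty$ by hypothesis, and we are in the range $0<s<n/p$ with $0<n-sp<1$, it suffices to produce $h\in H^p$ with $g=\mathcal{C}_s(h)$ such that $|h|^p\,d\sigma$ is a Carleson measure for $H^p_s$ on $\bB$.

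The natural choice is $h=\PN^{0,s}(g)=\mathcal{C}_{-s}(g)$. The holomorphic potentials of \cite{cohnverbitsky} are constructed precisely so that this fractional derivative admits an explicit integral representation as the boundary value of a holomorphic function whose modulus is pointwise comparable to a real-variable nonisotropic nonlinear potential of $\mu$ on $\bB$. The two choices $\mathcal{U}$ and $\mathcal{V}$ correspond, respectively, to the Riesz-type ($p<2$) and Wolff/square-function-type ($p\geq 2$) linearizations of $|h|^{p}$; this is exactly the reason the statement splits into two cases. Once $h$ is identified, the $H^\infty$-norm of $g$ will control the $L^\infty$-norm of the real-variable nonlinear potential associated with the measure $|h|^p\,d\sigma$.

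Since $n-sp<\lambda<1$, Carleson measures for $H^p_s$ on $\bB$ are characterized by a nonisotropic $(s,p)$-capacity (as noted in the discussion following Theorem \ref{thm:bijectMH}), so the task reduces to verifying the capacitary testing inequality
\[
\int_{B(\z,r)} |h|^p \, d\sigma \lesssim \|g\|_{H^\infty}\cdot \mathrm{cap}_{s,p}(B(\z,r))
\]
on every nonisotropic ball $B(\z,r)\subset\bB$. This will be obtained by testing the bounded holomorphic potential against the truncated measures $\mu|_{B(\z,r)}$ and invoking the Cohn--Verbitsky/Wolff inequality that relates the $L^\infty$-norm of the holomorphic potential to the capacitary norm of $\mu$.

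The main obstacle will be precisely this Wolff-type passage: converting the uniform boundedness of a holomorphic nonlinear potential into a capacity-type Carleson estimate for the boundary derivative $h$. The sharp range $n-sp<\lambda<1$ is used so that the relevant Wolff inequality holds with the correct exponents, and the dichotomy $p<2$ versus $p\geq 2$ is forced by the two natural ways of linearizing the nonlinear quantity $|h|^{p}$ -- H\"older's inequality in the subquadratic regime (which dictates the form of $\mathcal{U}_{s,p,\lambda}$) and Minkowski's integral inequality in the superquadratic regime (which dictates the form of $\mathcal{V}_{s,p,\lambda}$). Once the capacitary testing inequality is established, condition $(\ref{item:bijectMH2})$ of Theorem \ref{thm:bijectMH} is verified and the conclusion $g\in Mult(H^p_s)$ follows.
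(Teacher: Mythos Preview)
Your proposal has two genuine gaps.

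First, the central claim that $h=\PN^{0,s}(g)$ ``admits an explicit integral representation \dots whose modulus is pointwise comparable to a real-variable nonisotropic nonlinear potential of $\mu$'' is asserted but not justified, and for $\mathcal{V}_{s,p,\lambda}(\mu)$ it is almost certainly not available. Look at the definition: the $(p'-1)$-th power sits \emph{inside} the $r$-integral, so applying $\PN^{0,s}$ (a linear operator in $z$) does not commute with the nonlinearity, and there is no clean formula for $h$ on $\bB$. The Cohn--Verbitsky potentials are engineered so that their \emph{radial derivatives} (after integrating in $\rho$) are dominated by the real Wolff potential; they are not engineered so that $\PN^{0,s}$ of them has this property. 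This is precisely why the paper does \emph{not} go through condition (\ref{item:bijectMH2}) of Theorem~\ref{thm:bijectMH}. Instead it uses the embedding $F^{p,1}_s\subset H^p_s$, bounds $|f(\rho\zeta)|$ by $M_{\mathrm{rad}}f(\zeta)$, and reduces matters to showing that the boundary measure $\bigl(\int_0^1 |(1+R)^k g(\rho\zeta)|(1-\rho)^{k-s}\,\frac{d\rho}{1-\rho}\bigr)^p d\sigma(\zeta)$ is a trace measure for $\mathcal{I}_s[L^p]$. For $p<2$ this is done via the Maz'ya--Verbitsky $A_1$-weight criterion against the reference measure $(\mathcal{I}_s\mu)^{p'}d\sigma$, using H\"older with exponents $1/(p-1)$, $1/(2-p)$ and the weighted Wolff theorem; for $p\ge 2$ it is done by direct capacitary testing against the extremal measure $\mu_G$ of an arbitrary open set $G$, using that $(\mathcal{I}_s(\mathcal{I}_s\mu_G)^{p'-1})^\delta\in A_1$.

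Second, even granting an identification of $|h|$, your capacitary testing is stated only on nonisotropic balls $B(\zeta,r)$. For the capacitary characterization of trace measures for $\mathcal{I}_s[L^p]$ (equivalently, Carleson measures for $H^p_s$ when $0<n-sp<1$) one must test on \emph{all} compact (or open) sets; ball-testing is well known to be insufficient in nonlinear potential theory. The paper's $p\ge 2$ argument indeed tests on an arbitrary open $G$ via its extremal capacitary measure, and the $p<2$ argument circumvents testing altogether by the $A_1$-weight comparison.
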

In particular, if the measure is the capacitary measure associated to compact subsets of $\bB$, which will be defined later, we have:

\begin{thm}\label{thm:multsextremal}
Let $1<p<\infty$, $0<s<n/p$  and let $E$ be a compact subset in $\bB$.   If $\mu_E$ is the capacitary extremal measure associated to $E$, we  have:
\begin{enumerate}
\item If $1<p<2$ and in addition $n-s<\lambda<1 $, the holomorphic potential ${\mathcal U}_{s,p,\lambda}(\mu_E)$ is a multiplier for $H_s^p$.
\item  If $p\geq 2$, and $n-sp<\lambda<1$, the holomorphic potential ${\mathcal V}_{s,p,\lambda}(\mu_E)$ is a multiplier for $H_s^p$.
\end{enumerate}
\end{thm}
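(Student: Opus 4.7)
My plan is to deduce Theorem \ref{thm:multsextremal} directly from Theorem \ref{thm:multpot} specialized to $\mu = \mu_E$. Theorem \ref{thm:multpot} yields the multiplier conclusion as soon as the relevant holomorphic potential is bounded on $\B$, so the whole argument reduces to verifying
$$
{\mathcal U}_{s,p,\lambda}(\mu_E) \in L^\infty(\B) \quad (1<p<2), \qquad
{\mathcal V}_{s,p,\lambda}(\mu_E) \in L^\infty(\B) \quad (p\ge 2),
$$
under the stated ranges of $\lambda$. No new machinery beyond what has already been developed in the paper should be required.

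The key input would be the dual variational characterization of the capacity $C_{s,p}$ associated to $H^p_s$: it is obtained as a supremum of $\mu(E)$ over positive measures supported in $E$ subject to a uniform bound on an associated nonlinear potential, and the capacitary extremal measure $\mu_E$ realizes this supremum. By the classical pattern of Adams--Hedberg, transplanted to the holomorphic Hardy--Sobolev setting in the Cohn--Verbitsky framework, $\mu_E$ satisfies a quasi-everywhere bound on $E$ for the dual nonlinear potential, with a constant depending only on $E$. The next step is to identify the Cohn--Verbitsky holomorphic potentials ${\mathcal U}_{s,p,\lambda}$ and ${\mathcal V}_{s,p,\lambda}$ with, or dominate them by, this dual object. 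The two potentials are complementary: ${\mathcal U}$ is an iterated Riesz-type object naturally matched to $1<p<2$, while ${\mathcal V}$ is of Wolff type matched to $p\ge 2$, mirroring the real-variable dichotomy; the ranges $n-s<\lambda<1$ in (i) and $n-sp<\lambda<1$ in (ii) are precisely what ensures both that the defining kernels are integrable against $\mu_E$ and that the capacitary estimate transfers faithfully to the holomorphic side.

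The main obstacle I expect is promoting the quasi-everywhere bound on $E$ to a genuine pointwise $L^\infty$ bound on all of $\B$. Inside the ball, holomorphicity smooths a quasi-everywhere bound via mean value estimates on Koranyi subballs; near the boundary one has to combine the bound on $E$ with a tail estimate for the potential, and the upper constraint $\lambda<1$ is exactly what makes this tail part uniformly summable against the finite measure $\mu_E$. Once this pointwise bound is in hand, Theorem \ref{thm:multpot} applies verbatim and yields the multiplier property in both cases.
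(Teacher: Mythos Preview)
Your high-level strategy --- reduce to boundedness of the holomorphic potentials and invoke Theorem~\ref{thm:multpot} --- is exactly what the paper does. The divergence is entirely in how you propose to obtain the boundedness.

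The paper does \emph{not} pass through a quasi-everywhere bound on $E$ followed by a promotion argument. Instead it establishes direct pointwise domination of the holomorphic potentials by the real-variable nonlinear potentials, which are already known to be bounded \emph{everywhere} on $\bB$ by Proposition~\ref{prop:extremalcapacity}(iv). Concretely: for $1<p<2$ one shows $M_{rad}[{\mathcal U}_{s,p,\lambda}(\mu_E)] \lesssim \mathcal{I}_s\bigl[(\mathcal{I}_s(\mu_E))^{p'-1}\bigr]$ by first using $\lambda>n-s$ to replace the kernel $(1-r)^{\lambda-n}/|1-rz\bar\zeta|^{\lambda}$ by $1/|1-z\bar\zeta|^{n-s}$, and then using $p'-1>1$ together with monotonicity of $r\mapsto \mu_E(B(\zeta,1-r))$ to pull the exponent outside the $r$-integral. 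For $p\ge 2$ one shows $M_{rad}[{\mathcal V}_{s,p,\lambda}(\mu_E)] \lesssim {\mathcal W}_{s,p}(\mu_E)$ by exploiting $p'-2\le 0$: one factor of the inner integral is bounded \emph{below} by $\mu_E(B(\eta,\delta))/(\delta+1-r)^\lambda$, the remaining linear factor is bounded \emph{above} by a dyadic sum, and the resulting $r$-integral collapses to the Wolff potential. In both cases the right-hand side is bounded on $\bB$ by the extremal property of $\mu_E$.

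Your proposed route --- a quasi-everywhere bound on $E$, mean-value smoothing inside $\B$, and a tail estimate near $\bB\setminus E$ --- is not obviously executable. The holomorphic potentials live on $\B$ while $\mu_E$ is supported on $\bB$, so the passage from a q.e.\ boundary bound on $E$ to a global $L^\infty$ bound on $\B$ would require a maximum-principle or Poisson-integral type argument that you have not supplied and that is not available in the paper's toolkit. The paper's pointwise-domination argument sidesteps this entirely, and is where the specific conditions $\lambda>n-s$ (case $p<2$) and $p'-2\le 0$ (case $p\ge 2$) actually enter.
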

 
Finally, we give some applications of the results obtained. We give a generalization of some examples of multipliers given by Beatrous and Burbea in \cite{beatrousburbea}, we solve a strong Corona problem for multipliers with data holomorphic potentials of capacitary measures, and we show that the sets of capacity zero are "weak exceptional sets" for the multipliers of $H_s^p$ in a sense that will be specified.

The paper is organized as follows:

 Section \ref{sec:preliminaries} is devoted to the preliminaries on Triebel-Lizorkin spaces and their space of pointwise multipliers. 
 In Section \ref{sec:operatorPNMF}  we study the action of the operators of Bergman type $\PN^{N,M}$ on Triebel-Lizorkin spaces. 
 The action of these operators on spaces of pointwise multipliers is studied in Section  \ref{sec:pointwise} where we also prove Theorem \ref{thm:bijectMF}.

 In Section \ref{sec:multHS} we consider multipliers of Hardy-Sobolev spaces and in particular we prove Theorem \ref{thm:bijectMH}.  
 
 Theorems  \ref{prop:multiplierscasp=2} and \ref{thm:multpot} are proved in Section \ref{sec:multholpot}. In Section \ref{sec:examples} we prove Theorem \ref{thm:multsextremal}.
Section \ref{sec:applications} is devoted to obtain  applications of the results obtained in the previous sections. Finally, in Section \ref{sec:mf}, we give the proof of the above mentioned generalized Leibnitz's formula.

\section{Preliminaries}\label{sec:preliminaries}

\subsection{Spaces of functions on $\B$}

In this section we recall some properties of the holomorphic Besov and Hardy-Sobolev spaces on $\B$.

\begin{defn}
Let $1\le p<\infty$ and $s\in\R$. 
The holomorphic Besov space $B^p_s$ consists of holomorphic functions on $\B$ such that 
$$
\|f\|_{B^p_s}^p:=\int_B |(1+R)^{k}f(z)|^p(1-|z|^2)^{(k-s)p-1}d\nu(z)<\infty,
$$ 
for a nonnegative integer $k>s$. 

If $p=\infty$, then $B^\infty_s$ consists of the holomorphic functions on $\B$ such that for a nonnegative integer $k>s$,
$$
\|f\|_{B^\infty_s}:=\sup_{z\in\B}|(1+R)^{k}f(z)|(1-|z|^2)^{k-s}<\infty.
$$ 
\end{defn}

It is well known that for $p\leq \infty$, different values of $k$ give equivalent norms on $B^p_s$.

The spaces $H^p_s$ and $B^p_s$ are in the scale of Triebel-Lizorkin spaces.
Let us recall the definition of these spaces.

\begin{defn}
For $\,\z\in \bB,\,$ let $\,\Gamma(\z)\,$ be
the admissible region
$$ 
\Gamma(\z)=\left\{ z \in \B:\,|1-z\bar \z |<2(1-|z|^2)\,\right\}.\,
$$

Let $\mu$ be a positive Borel measure on $\B$.
We denote by $T^{p,q}(\mu)$ the non-isotropic tent space of measurable functions $\varphi$ on $\B$ such that 
$$
\|\varphi\|_{T^{p,q}(\mu)}^p :=\int_{\bB}\left(\int_{\Gamma_\z}|\varphi(w)|^q\,\frac{d\mu(w)}{(1-|w|^2)^{n+1}}\right)^{p/q} d\sigma(\z)
$$
is finite.

If $\mu=\nu$, then we write  $T^{p,q}$.
\end{defn}

\begin{defn} 
Let $1\le p,q<\infty$ and $s\in\R$.
The holomorphic Triebel-Lizorkin space $F^{p,q}_s$ consists of holomorphic functions on $\B$ such that 
$$
\|f\|_{F^{p,q}_s}:=\left\|(1-|z|^2)^{k-s}(1+R)^{k} f(z)\right\|_{T^{p,q}}<\infty,
$$
for a  nonnegative integer $k>s$.
\end{defn}

As it happens in other spaces of holomorphic functions,  
 different values of $k$ provide equivalent norms on $F^{p,q}_s$.

The next proposition gives some embeddings between these spaces. For functions in the corresponding real spaces, the proof of such embeddings can be found in \cite{Tr}.
A proof of the embeddings between  Hardy-Sobolev spaces  and  holomorphic Besov spaces  
using techniques of complex variables can be found in \cite{beatrousburbea}. 
Extensions of these results to Triebel-Lizorkin spaces can be found 
for instance in \cite[Theorem 4.1]{Or-Fa2}. 

\begin{prop}\label{prop:embedF}
Let $1\le p,q<\infty$ and $s,t\in\R$. Then:  
\begin{enumerate}
	\item $H^p_s=F^{p,2}_s$ and  $B^p_s=F^{p,p}_s$.
	
\item For any $\e>0$, we have
$$
B^\infty_{s+n/p'+\e}\subset B^1_{s+n/p'}\subset F^{p,q}_s\subset B^\infty_{s-n/p}\subset B^1_{s-n/p-\e}.
$$
\item If $q_0>q$, then $F^{p,q}_s\subset F^{p,q_0}_s$.
\item\label{item:embedF4} If $p_0>p$ and $s_0=s+n/p_0-n/p$, then $F^{p,q}_s\subset F^{p_0,1}_{s_0}\subset H^{p_0}_{s_0}$.
	\item\label{item:embedF5} If $t<s$, then $F^{p,q}_s\subset B^1_{t}$.
\end{enumerate}
\end{prop}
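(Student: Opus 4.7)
The plan is to establish the five assertions in the order $(i)\to(ii)\to(iii)\to(iv)\to(v)$, so that each rests on work done before, with one pointwise growth bound doing the heavy lifting for several of them.

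For~(i), the identity $B^p_s=F^{p,p}_s$ is a Fubini computation: the standard estimate $\sigma\{\zeta\in\bB:w\in\Gamma(\zeta)\}\asymp(1-|w|^2)^n$ cancels the $(1-|w|^2)^{n+1}$ weight inside the $T^{p,p}$-norm and converts it into $\int_\B|(1+R)^kf|^p(1-|z|^2)^{(k-s)p-1}\,d\nu$, which is the Besov norm. The equality $H^p_s=F^{p,2}_s$ is the classical area-function characterization of $H^p$, applied after regularizing the fractional derivative $(1+R)^s f$ by an integer power of $(1+R)$.

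The core of~(ii) is the pointwise bound
\[
|(1+R)^k f(w)|\le C\,(1-|w|^2)^{s-k-n/p}\|f\|_{F^{p,q}_s},\qquad k>s,
\]
obtained from sub-mean value on a small Kor\'anyi ball at $w$ together with the observation that a definite portion of $\bB$ has $w$ in its admissible tent, which lets the local $L^q$-average be absorbed into the tent norm. This yields $F^{p,q}_s\subset B^\infty_{s-n/p}$ at once, while the outer inclusions $B^\infty_{t+\varepsilon}\subset B^1_t$ reduce to checking that $(1-|z|^2)^{\varepsilon-1}$ is $\nu$-integrable (after choosing the derivative order above $t+\varepsilon$). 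The reverse middle embedding $B^1_{s+n/p'}\subset F^{p,q}_s$ comes from a Bergman-type reproducing formula for $(1+R)^k f$ (in the spirit of the operators $\PN^{N,M}$ driving this paper): express the holomorphic derivative through a reproducing kernel against a higher-order derivative, then bound the tent-space norm by Minkowski and Forelli--Rudin on the kernel integrals.

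Item~(iii) is the step I expect to be the main technical obstacle, because the naive inclusion $L^q\subset L^{q_0}$ for $q<q_0$ goes the \emph{wrong} way on finite-measure spaces; the argument must use holomorphy essentially. For the holomorphic factor $\varphi=(1+R)^k f$, sub-mean value makes the $L^{q_0}$- and $L^q$-means on a small Kor\'anyi ball comparable, and after a mild enlargement of $\Gamma(\zeta)$ and a Fubini-type rearrangement one converts a $q_0$-tent integral into a $q$-tent integral at the price of a constant. Given~(iii), part~(iv) is obtained by using the pointwise estimate above to trade integrability for smoothness (going from $p$ to $p_0>p$ at the cost of $n/p-n/p_0$ derivatives), producing the embedding into $F^{p_0,1}_{s_0}$; the second half $F^{p_0,1}_{s_0}\subset H^{p_0}_{s_0}$ is then (iii) combined with the $q=2$ identification from~(i). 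Finally, for~(v) with $t<s$, I would pick $p_0>p$ close enough to $p$ that $s_0:=s+n/p_0-n/p$ still satisfies $t<s_0$, apply~(iv) followed by~(iii) and~(i) to chain $F^{p,q}_s\subset F^{p_0,1}_{s_0}\subset F^{p_0,p_0}_{s_0}=B^{p_0}_{s_0}$, and finish with a direct H\"older argument showing $B^{p_0}_{s_0}\subset B^1_t$ whenever $s_0>t$.
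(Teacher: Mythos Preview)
The paper does not actually prove this proposition. It is stated as a known result, with the remark preceding it pointing to Triebel's book for the real-variable analogues, to Beatrous--Burbea for the Hardy--Sobolev/Besov embeddings, and to \cite{Or-Fa2} for the holomorphic Triebel--Lizorkin extensions. So there is no in-paper argument to compare your proposal against.

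Your sketch is a sound outline of the standard direct approach and is essentially what one finds in the cited references: the Fubini/area-function identifications for~(i), the sub-mean-value growth bound driving the $B^\infty$ inclusions in~(ii), the subharmonicity-based comparison of admissible $q$- and $q_0$-averages for~(iii), and the trading of integrability for smoothness in~(iv). The only place I would ask you to be more explicit is the step $F^{p,q}_s\subset F^{p_0,1}_{s_0}$ in~(iv): the phrase ``use the pointwise estimate to trade integrability for smoothness'' hides a genuine computation (one typically writes $|(1+R)^kf|^{1}$ as $|(1+R)^kf|^{p/p_0}\cdot|(1+R)^kf|^{1-p/p_0}$, controls the second factor by the pointwise growth bound, and then applies an admissible maximal/area argument to pass from the inner $L^1$ to the outer $L^{p_0}$); as written it is plausible but not yet a proof. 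Your route to~(v) through~(iv) and H\"older is correct and in fact necessary, since the chain via~(ii) alone only reaches $B^1_{s-n/p-\varepsilon}$ and misses the range $s-n/p\le t<s$.
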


The next result can be found, for instance, in \cite{ortegafabrega} and \cite{Or-Fa3}.
\begin{thm}
Let $1<p<\infty$ and $s>0$. Then, $Mult(H^p_s)=H^\infty\cap CF^{p,2}_s$ and $Mult(B^p_s)=H^\infty\cap CF^{p,p}_s$.
\end{thm}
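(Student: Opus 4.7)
The plan is to derive both inclusions from the classical Leibniz identity $R(fg) = fRg + gRf$ iterated $k = k_s$ times, combined with the elementary fact that $H^\infty \subset Mult(F^{p,q}_{s-k})$, which holds since $s - k < 0$. Throughout, use the expansion
\begin{equation*}
(1+R)^k(fg) = g \cdot (1+R)^k f + f \cdot (1+R)^k g + \sum_{j=1}^{k-1} c_j\, R^j f \cdot R^{k-j} g,
\end{equation*}
with suitable binomial-type coefficients $c_j$. The two extreme terms are tractable; the mixed sum is the subtle point.

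For the inclusion $H^\infty \cap CF^{p,q}_s \subseteq Mult(F^{p,q}_s)$, take $g$ in the right-hand side and $f \in F^{p,q}_s$. Proving $fg \in F^{p,q}_s$ reduces to $(1+R)^k(fg) \in F^{p,q}_{s-k}$. In the identity above, the first term lies in $F^{p,q}_{s-k}$ because $g \in H^\infty$ multiplies that negative-smoothness space, and the second lies there by the very definition of $CF^{p,q}_s$. Conversely, for $Mult(F^{p,q}_s) \subseteq H^\infty \cap CF^{p,q}_s$, testing with $f \equiv 1$ gives $g \in F^{p,q}_s$; the boundedness $g \in H^\infty$ follows from a reproducing-kernel estimate as in the case of $H^p_s$ recalled in the introduction; and solving the identity above for $f \cdot (1+R)^k g$ shows that $(1+R)^k g \in Mult(F^{p,q}_s \to F^{p,q}_{s-k})$, again contingent on the mixed terms being controlled.

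The principal obstacle in both directions is the handling of the mixed terms $R^j f \cdot R^{k-j} g$ for $1 \leq j \leq k-1$. The intended strategy is to establish the intermediate multiplier relations
\begin{equation*}
(1+R)^{k-j} g \in Mult\bigl(F^{p,q}_{s-j} \to F^{p,q}_{s-k}\bigr), \qquad 1 \leq j \leq k-1,
\end{equation*}
starting from whichever characterisation of $g$ is being assumed. Combined with the trivial mapping $R^j : F^{p,q}_s \to F^{p,q}_{s-j}$, each mixed term then lands in $F^{p,q}_{s-k}$. These intermediate relations should be obtainable from Theorem \ref{thm:bijectMF}, which bijectively links multiplier spaces at neighbouring smoothness indices via the operators $\PN^{N,M}$, or from a direct complex interpolation argument between the top level $s$ (where $g$ multiplies into itself) and level $0$ (where $g$ is merely bounded). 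Once this intermediate step is in place, assembling the $k+1$ pieces of the Leibniz expansion yields both inclusions.
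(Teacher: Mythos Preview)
The paper does not give its own proof of this theorem: it appears in the preliminaries with a citation to \cite{ortegafabrega} and \cite{Or-Fa3}. The paper's own contribution is the more general Theorem~\ref{thm:charmult}, whose proof passes through Corollary~\ref{cor:charMFs} and Proposition~\ref{prop:charMssp} and does rely on the $\PN^{N,M}$ machinery you invoke via Theorem~\ref{thm:bijectMF}. So your first proposed route for the mixed terms---appeal to Theorem~\ref{thm:bijectMF}---is essentially the paper's method, and it works.

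Your alternative route, complex interpolation ``between the top level $s$ (where $g$ multiplies into itself) and level $0$'', has a genuine gap in the direction $H^\infty \cap CF^{p,q}_s \subseteq Mult(F^{p,q}_s)$. At that stage you do not yet know that $g$ multiplies $F^{p,q}_s$ into itself; that is precisely the conclusion. What you are given is only $g \in H^\infty$ and $(1+R)^{k_s}g \in Mult(F^{p,q}_s \to F^{p,q}_{s-k_s})$, and interpolation alone cannot manufacture from these the lower-order statement $(1+R)^{k_s-1}g \in Mult(F^{p,q}_s \to F^{p,q}_{s-k_s+1})$: one needs a device that \emph{integrates} the multiplier condition by one order. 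This is exactly what Proposition~\ref{prop:charMssp} supplies, via the representation of Corollary~\ref{cor:RkvPN} together with Proposition~\ref{prop:PNMmaps}. By contrast, for the reverse inclusion $Mult(F^{p,q}_s) \subseteq H^\infty \cap CF^{p,q}_s$ your Leibniz-plus-interpolation idea is adequate: the paper carries it out one order at a time in Proposition~\ref{prop:grown}\eqref{item:grown4},\eqref{item:grown5}, and no $\PN^{N,M}$ input is required there.
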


 \subsection{Properties of the space  $ Mult(F^{p,q}_s\to F^{p,q}_{s'})$}\quad\par

In this section we summarize some properties of the space of pointwise multipliers  
$ Mult(F^{p,q}_s\to F^{p,q}_{s'})$. Some of them are well known, 
specially the ones corresponding to Hardy-Sobolev spaces ($q=2$ with $s=s'$) or 
Besov spaces $(q=p)$ with $s=s'$. Since the possible references  are scattered, and not always in the generality that we require, we have thought proper to give a sketch of the proof of all them, using several usual techniques.

\begin{prop}\label{prop:grown}
Let $1< p,q<\infty$ and $s,s'\in\R$.  Then, we have:   

\begin{enumerate}
	\item \label{item:grown1} If $s'<s\le n/p$, then $ Mult(F^{p,q}_s\to F^{p,q}_{s'})\subset B^\infty_{s'-s}\cap F^{p,q}_{s'}$. If $s'<s<0$, then $ Mult(F^{p,q}_s\to F^{p,q}_{s'})= B^\infty_{s'-s}$
	\item \label{item:grown2}If  $s\le n/p$, then $ Mult(F^{p,q}_s)\subset H^\infty\cap F^{p,q}_{s}$. If $s<0$, then $ Mult(F^{p,q}_s)= H^\infty$.
	\item \label{item:grown7} If $s'\le s$ and $s>n/p$,  then $Mult(F^{p,q}_s\to F^{p,q}_{s'})=F^{p,q}_{s'}$. In particular, $Mult(F^{p,q}_s)$ is a multiplicative algebra.
	\item \label{item:grown3} If $k>s\ge s'$, then $ B^\infty_k\subset Mult(F^{p,q}_s\to F^{p,q}_{s'})$.
	\item \label{item:grown4} If $s'\le s
	$ and $\tau>0$, then 
	$$
	Mult(F^{p,q}_{s}\to F^{p,q}_{s'})\subset Mult(F^{p,q}_{s-\tau}\to F^{p,q}_{s'-\tau}).
	$$
 	\item \label{item:grown5} If $s'\le s$ and $g\in Mult(F^{p,q}_s\to F^{p,q}_{s'})$, then 
$g$ and $(1+R)^k g\in Mult(F^{p,q}_s\to F^{p,q}_{s'-k})$ for any positive integer $k$.
	\item \label{item:grown6} If $s'>s$, then $Mult(F^{p,q}_s\to F^{p,q}_{s'})=\{0\}$.

\end{enumerate}
\end{prop}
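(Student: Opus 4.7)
The plan is to combine three standard tools: (a) a test-function argument using normalized reproducing-kernel atoms $K_w(z)=(1-|w|^2)^a/(1-z\bar w)^{n+a-s}$ peaking at $w\in\B$; (b) the embeddings recorded in Proposition~\ref{prop:embedF}; and (c) the generalized Leibniz formula for $\PN^{N,M}$ announced in the introduction, which in the integer-parameter case reduces to the ordinary Leibniz rule for the holomorphic derivation $R$.

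To handle the pointwise growth items \eqref{item:grown1}, \eqref{item:grown2} and \eqref{item:grown6}, I would normalize so that $\|K_w\|_{F^{p,q}_s}\lesssim 1$, note that a Forelli--Rudin type integral estimate gives $K_w(w)\sim (1-|w|^2)^{s-n/p}$, and for $g\in Mult(F^{p,q}_s\to F^{p,q}_{s'})$ combine $\|gK_w\|_{F^{p,q}_{s'}}\lesssim 1$ with the embedding $F^{p,q}_{s'}\subset B^\infty_{s'-n/p}$ of Proposition~\ref{prop:embedF} to obtain $|g(w)K_w(w)|\lesssim (1-|w|^2)^{s'-n/p}$, whence $|g(w)|\lesssim (1-|w|^2)^{s'-s}$. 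Taking $s'=s$ yields $g\in H^\infty$ in \eqref{item:grown2}; taking $s'<s$ yields the $B^\infty_{s'-s}$ growth in \eqref{item:grown1}; and membership in $F^{p,q}_{s'}$ in both cases follows by applying the multiplier to the constant function $1\in F^{p,q}_s$. For \eqref{item:grown6} the bound $|g(w)|\lesssim (1-|w|^2)^{s'-s}$ with $s'-s>0$ forces $g$ to extend continuously to $\overline\B$ with zero boundary values, so the maximum modulus principle yields $g\equiv 0$. The equalities $Mult(F^{p,q}_s\to F^{p,q}_{s'})=B^\infty_{s'-s}$ for $s'<s<0$ and $Mult(F^{p,q}_s)=H^\infty$ for $s<0$ I would finish by combining the necessary condition just obtained with the sufficient-condition arguments below.

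Items \eqref{item:grown7}--\eqref{item:grown5} I plan to derive from Leibniz-type manipulations. For \eqref{item:grown7}, Proposition~\ref{prop:embedF}\eqref{item:embedF4} places $F^{p,q}_s$ inside $H^\infty$ when $s>n/p$ and makes it a multiplicative algebra, so multiplication by any $g\in F^{p,q}_{s'}$ preserves the scale $F^{p,q}_{s'}$; the converse is testing on $1$. For \eqref{item:grown3}, I fix an integer $m>s$ and write
\begin{equation*}
(1+R)^m(gf)=\sum_{j=0}^m\binom{m}{j}(1+R)^jg\cdot(1+R)^{m-j}f,
\end{equation*}
using $g\in B^\infty_k$ with $k>s$ to bound each factor $|(1+R)^jg(z)|\lesssim (1-|z|^2)^{k-j}$ and controlling each summand in the tent-space quasi-norm defining $F^{p,q}_{s'}$. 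Item \eqref{item:grown5} splits: the assertion for $g$ is immediate from $F^{p,q}_{s'}\subset F^{p,q}_{s'-k}$, and the assertion for $(1+R)^kg$ follows by solving the same Leibniz identity for the top-order term and absorbing the lower-order ones via the first assertion. For item \eqref{item:grown4}, given $f\in F^{p,q}_{s-\tau}$, I would use that $(1+R)^\tau$ is an isomorphism $F^{p,q}_s\to F^{p,q}_{s-\tau}$ to write $f=(1+R)^\tau F$ with $F\in F^{p,q}_s$, and invoke the generalized Leibniz identity to rewrite $g\cdot(1+R)^\tau F=(1+R)^\tau(gF)+E$, where the error $E$ is a sum of $\PN^{N,M}$-terms whose $F^{p,q}_{s'-\tau}$ norms are controlled by $\|g\|_{Mult(F^{p,q}_s\to F^{p,q}_{s'})}\|f\|_{F^{p,q}_{s-\tau}}$.

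The main obstacle I anticipate is precisely the fractional scale-shift \eqref{item:grown4}: the naive Leibniz rule fails for non-integer $\tau$, forcing reliance on the generalized formula and on proving that its error term lives in a strictly better space so that it can be absorbed. This is where the technology developed in the later sections of the paper becomes essential, and it makes the proof of \eqref{item:grown4} logically dependent on material not yet introduced at this point; the remaining items, by contrast, are routine adaptations of techniques already present in the Hardy-Sobolev and Besov literature.
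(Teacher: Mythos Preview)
Your test-function and Leibniz arguments for \eqref{item:grown1}, \eqref{item:grown2} and \eqref{item:grown3} match the paper. The ingredient you are missing in the remaining items is \emph{complex interpolation} of the scale $F^{p,q}_t$, and it resolves exactly the steps where your sketch either breaks or reaches forward. For \eqref{item:grown4} the paper does not use the generalized Leibniz formula at all: once \eqref{item:grown1}--\eqref{item:grown7} are available one knows $g\in B^\infty_{s'-s}$, and by the negative-index equality in \eqref{item:grown1} this gives $g\in Mult(F^{p,q}_{-s_0}\to F^{p,q}_{s'-s-s_0})$ for every $s_0>0$; interpolating the multiplication operator $M_g$ between this endpoint and the original hypothesis yields \eqref{item:grown4} directly, with no fractional Leibniz and no forward dependency on Section~\ref{sec:mf}.

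The same device closes the other gaps. In \eqref{item:grown7} with $s'<s$, your step ``$f\in F^{p,q}_s\subset H^\infty$ and $g\in F^{p,q}_{s'}$ force $gf\in F^{p,q}_{s'}$'' fails for $0\le s'\le n/p$, since $H^\infty\not\subset Mult(F^{p,q}_{s'})$ in that range; the paper instead interpolates between the algebra case $s'=s$ and $Mult(F^{p,q}_{s_0})=H^\infty$ for some $s_0<0$ to obtain $F^{p,q}_s\subset Mult(F^{p,q}_{s'})$, and then commutes the factors. In \eqref{item:grown6} your growth bound is only available when $s'<n/p$, because the $B^\infty_{s'-n/p}$-norm controls point values of $h$ only when one may take $k=0$; the paper reduces the case $s'\ge n/p$ to this range by yet another interpolation. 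Finally, in \eqref{item:grown5} the term $gRf$ in the identity $f(1+R)g=(1+R)(fg)-gRf$ requires $g\in Mult(F^{p,q}_{s-1}\to F^{p,q}_{s'-1})$, which is precisely \eqref{item:grown4} and not the trivial inclusion $F^{p,q}_{s'}\subset F^{p,q}_{s'-k}$ you invoke.
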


\begin{rem}
In Proposition \ref{prop:charMssp} we  will prove the following result which completes the above statement \eqref{item:grown5}:

\begin{enumerate}\setcounter{enumi}{7}
	\item  If $s'<s$ and $(1+R)^k g\in Mult(F^{p,q}_s\to F^{p,q}_{s'-k})$ for some positive integer $k$, then $g\in Mult(F^{p,q}_s\to F^{p,q}_{s'})$.  
\end{enumerate}
\end{rem}

\begin{proof}
Let us prove \eqref{item:grown1}. Assume $s'<s\le n/p$.
Since $F^{p,q}_s$ contains the constant functions, then it is clear that 
$Mult(F^{p,q}_s\to F^{p,q}_{s'})\subset  F^{p,q}_{s'}$.

In order to estimate the growth of $g\in Mult(F^{p,q}_s\to F^{p,q}_{s'})$, 
we use that $g\in Mult(B^1_{s+n/p'}\to B^\infty_{s'-n/p})$, which follows 
from the embeddings $B^1_{s+n/p'}\subset F^{p,q}_s$ and 
$F^{p,q}_{s'}\subset B^\infty_{s'-n/p}$ stated in Proposition \ref{prop:embedF}.

Fixed $z\in\B$, let $f_z(w)=\frac{(1-|z|^2)^{n+N}}{(1-w\overline z)^{n+N}}$.  By Proposition 1.4.10 in \cite{rudin} (see also Lemma \ref{lem:estP} below)
for  $N$ large enough 
we have that $\|f_z\|_{B^1_{s+n/p'}}\approx (1-|z|^2)^{n/p-s}$.
 Thus,  
\begin{equation}\label{eqn:grown1}\begin{split}
&(1-|z|^2)^{n/p-s'}|g(z)|\\
&\le \|(1-|w|^2)^{n/p-s'}g(w)f_z(w)\|_{L^\infty}
\approx \|g f_z\|_{B^\infty_{s'-n/p}}\\
&\lesssim  \|g\|_{Mult(B^1_{s+n/p'}\to B^\infty_{s'-n/p})}(1-|z|^2)^{n/p-s},
\end{split}\end{equation}
which gives $|g(z)|\lesssim \|g\|_{Mult(F_s^{p,q}\to F_{s'}^{p,q})}(1-|z|^2)^{s'-s}$. 
Hence $\|g\|_{B^\infty_{s'-s}}\lesssim \|g\|_{Mult(F_s^{p,q}\to F_{s'}^{p,q})}$.

If, now,  $s'<s<0$ and $g\in B^\infty_{s'-s}$, we have  
\begin{equation}\label{eqn:grown2}\begin{split}
\|gf\|_{F^{p,q}_{s'}}&\approx\|g(z)f(z)(1-|z|^2)^{-s'}\|_{T^{p,q}}\\
&\lesssim \|g\|_{B^\infty_{s'-s}}\|f(z)(1-|z|^2)^{-s}\|_{T^{p,q}}\\
&\approx \|g\|_{B^\infty_{s'-s}}\|f\|_{F^{p,q}_{s}},
\end{split}\end{equation}
which gives $\|g\|_{Mult(F_s^{p,q}\to F_{s'}^{p,q})}\lesssim \|g\|_{B^\infty_{s'-s}}$.

In order to prove \eqref{item:grown2},  let $g\in Mult(F^{p,q}_s)$. For any $f\in F^{p,q}_s$ and $z\in\B$, then
$$
|(gf)(z)|\le \sup_{\substack{h\in F^{p,q}_s\\h\ne 0}}\frac{|h(z)|}{\|h\|_{F^{p,q}_s}}\|gf\|_{F^{p,q}_s}
\le \sup_{\substack{h\in F^{p,q}_s\\h\ne 0}}\frac{|h(z)|}{\|h\|_{F^{p,q}_s}} \|g\|_{Mult(F^{p,q}_s)}\|f\|_{F^{p,q}_s},
$$
which gives $\|g\|_\infty\le \|g\|_{Mult(F^{p,q}_s)}$.

The same arguments  used to prove \eqref{eqn:grown2} show that if $s<0$, 
then the converse inequality holds. Thus \eqref{item:grown2} is proved.

In order to prove \eqref{item:grown7},  we first consider the case $s'=s>n/p$, and we must show that $F^{p,q}_s$ is a multiplicative algebra.

For a nonnegative integer $j$, $t\ge 0$ and $h\in H$, 
let $D_{j,t}(h)(z):=(1-|z|^2)^{j-t}(1+R)^j h(z)$. 
Since $F^{p,q}_s\subset B^\infty_{s-n/p}\subset H^\infty\subset B^\infty_0$, Leibnitz's formula for $k>s$ gives 
\begin{align*}
&\|fg\|_{F^{p,q}_s}\approx \|(1-|z|^2)^{2k-s}(2+R)^{2k}(fg)(z)\|_{T^{p,q}}\\
&\lesssim\sum_{j=0}^{k} \|D_{j,0}(f)\|_{L^\infty} \|D_{2k-j,s}g\|_{T^{p,q}}
+\sum_{j=k+1}^{2k} \|D_{2k-j,0}g\|_{L^\infty} \|D_{j,s} (f)\|_{T^{p,q}}\\
&\lesssim \|f\|_{H^\infty}\|g\|_{F^{p,q}_s}+
\|g\|_{H^\infty}\|f\|_{F^{p,q}_s}\lesssim \|f\|_{F^{p,q}_s}\|g\|_{F^{p,q}_s},
\end{align*}
which proves that $Mult(F^{p,q}_s)=F^{p,q}_s$.

Let $s'<s$ and let $s_0<0$ and $s_0<s'$. The Calderon complex method in the theory of interpolation spaces gives  
$F^{p,q}_{s'}=(F^{p,q}_s,F^{p,q}_{s_0})_{[\theta]}$ for $\theta$ satisfying $s'=(1-\theta)s+\theta s_0$
(see for instance \cite[Corollary 3.4]{Or-Fa1}). 

Thus, since for $s>n/p$, we have $F^{p,q}_s\subset H^\infty=Mult(F^{p,q}_{s_0})$ 
and $F^{p,q}_s\subset Mult(F^{p,q}_{s'})$, which proves  
$F^{p,q}_{s'}\subset Mult(F^{p,q}_{s}\to F^{p,q}_{s'})$ which finishes the proof of \eqref{item:grown7}.

Observe that if $s>n/p$, then $F^{p,q}_{s'}\subset B^\infty_{s'-n/p}\subset B^\infty_{s'-s}$. Therefore assertion \eqref{item:grown1} is also satisfied for $s>n/p$. The same argument holds for \eqref{item:grown2}.

Assertion \eqref{item:grown3} follows easily from Leibnitz's formula
\begin{equation}\label{eqn:Leibnitz}
(1+R)^k(gf)=\sum_{j=0}^k c_j (1+R)^j g R^{k-j}f\,
\end{equation}
and the fact that if $g\in B^\infty_k$, 
then $(1+R)^jg\in H^\infty$ for $0\le j<k$ and $(1+R)^k g\in B^\infty_0$.
 
In order to prove \eqref{item:grown4}, note that assertions 
\eqref{item:grown1}, \eqref{item:grown2} and \eqref{item:grown7} 
show that if $g \in Mult(F^{p,q}_s\to F^{p,q}_{s'})$,  
then $g\in Mult(F^{p,q}_{-s_0}\to F^{p,q}_{s'-s-s_0})$ for any $s_0>0$. Therefore \eqref{item:grown4} follows from the above mentioned interpolation result.

Consider now \eqref{item:grown5}. Let  $g\in Mult(F^{p,q}_s\to F^{p,q}_{s'})$. 
Since $F^{p,q}_{s'}\subset F^{p,q}_{s'-k}$, it is clear that $g\in Mult(F^{p,q}_s\to F^{p,q}_{s'-k})$.
By \eqref{item:grown4}, we also have  
$g\in  Mult(F^{p,q}_{s-1}\to F^{p,q}_{s'-1})$. Thus,
if $f\in F^{p,q}_{s'}$, then the identity $f(1+R)g=(1+R)(fg)-gRf$ gives 
 $f(1+R)g\in F^{p,q}_{s'-1}$, which proves that  
 $(1+R)g\in Mult(F^{p,q}_{s}\to F^{p,q}_{s'-1})$. Iterating this result we obtain \eqref{item:grown5}.  

To conclude we prove \eqref{item:grown6}. Consider first the case $s'<n/p$. If 
$g\in Mult(F^{p,q}_s\to F^{p,q}_{s'})$, $s'>s$, then by \eqref{eqn:grown1}, 
$(1-|z|^2)^{s-s'}|g(z)|$ is bounded, and consequently $g=0$. 
 
The proof for the case $s'\geq n/p$ can be reduced to this case. Since $F^{p,q}_{s'}\subset F^{p,q}_s$, we have 
$$
Mult(F^{p,q}_s\to F^{p,q}_{s'})\subset Mult(F^{p,q}_{s'})\subset H^\infty=Mult(F^{p,q}_{-s'}\to F^{p,q}_{-s'}).
$$
Therefore, by interpolation with $\theta=1/2$ we obtain 
$$
Mult(F^{p,q}_s\to F^{p,q}_{s'})\subset Mult(F^{p,q}_{(s-s')/2}\to F^{p,q}_{0})=\{0\}
$$
which concludes the proof.
\end{proof}

\begin{rem}
Proposition \ref{prop:grown} shows that the space 
$Mult(HF^{p,q}_s\to HF^{p,q}_{s'})$  coincides with $B_{s'-s}^\infty$ if $s'<s<0$ and with $H^\infty$ if $s=s'<0$. On the other hand, if $s'\leq s$  and $s>n/p$,  it coincides with $F_{s'}^{p,q}$,  and is identically zero if $s'>s$.
In all these situations the space of multipliers has a simple description. 
Therefore, in the rest of the paper we only consider the remainder case  $0\le s\le n/p$ and $s'\le s$.
\end{rem}

\begin{cor}\label{cor:charMFs}
Let $1<p,q<\infty$ and $0\le s\le n/p$. Then $g\in Mult(F^{p,q}_s)$ if and only if 
$g\in H^\infty$ and for some (any) nonnegative integer $k>s$,  
$(1+R)^j g\in Mult(F^{p,q}_{s}\to F^{p,q}_{s-j})$, for $j=1,\cdots,k$.
\end{cor}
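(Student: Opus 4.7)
The plan is to prove the two implications separately. The forward direction follows at once from Proposition~\ref{prop:grown}: if $g \in Mult(F^{p,q}_s)$, part~\eqref{item:grown2} forces $g \in H^\infty$, while part~\eqref{item:grown5} applied with $s'=s$ yields $(1+R)^j g \in Mult(F^{p,q}_s \to F^{p,q}_{s-j})$ for \emph{every} positive integer $j$. This works simultaneously for any $k > s$, which is exactly the strength of the ``for some (any)'' formulation.

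For the converse, fix $k > s$ and assume $g \in H^\infty$ together with $(1+R)^j g \in Mult(F^{p,q}_s \to F^{p,q}_{s-j})$ for $j=1,\dots,k$. Given $f \in F^{p,q}_s$, the strategy is to use the equivalent norm
$$
\|gf\|_{F^{p,q}_s} \approx \bigl\|(1-|z|^2)^{k-s}(1+R)^k(gf)\bigr\|_{T^{p,q}},
$$
expand $(1+R)^k(gf)$ by Leibnitz's formula \eqref{eqn:Leibnitz}, and bound each summand
$$
A_j := \bigl\|(1-|z|^2)^{k-s}(1+R)^j g \cdot R^{k-j} f\bigr\|_{T^{p,q}}, \qquad 0 \le j \le k.
$$
For $j=0$ the idea is to pull $\|g\|_\infty$ out of the tent-space norm and use the standard equivalence between $R^k$ and $(1+R)^k$ on holomorphic functions, yielding $A_0 \lesssim \|g\|_\infty \|f\|_{F^{p,q}_s}$. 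For $1 \le j \le k$, since $s-k<0$, the weight $(1-|z|^2)^{k-s}$ is precisely the one defining the $F^{p,q}_{s-k}$-norm at derivative order zero, so $A_j \approx \|(1+R)^j g \cdot R^{k-j} f\|_{F^{p,q}_{s-k}}$.

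The closing step is where Proposition~\ref{prop:grown}\eqref{item:grown4} enters: for $j<k$, applying it with $\tau = k-j > 0$ promotes the hypothesis $(1+R)^j g \in Mult(F^{p,q}_s \to F^{p,q}_{s-j})$ to $(1+R)^j g \in Mult(F^{p,q}_{s-(k-j)} \to F^{p,q}_{s-k})$, while for $j=k$ the hypothesis already gives $(1+R)^k g \in Mult(F^{p,q}_s \to F^{p,q}_{s-k})$. Coupled with the elementary bound $\|R^{k-j} f\|_{F^{p,q}_{s-(k-j)}} \lesssim \|f\|_{F^{p,q}_s}$, this closes each $A_j$, and summing over $j$ yields $\|gf\|_{F^{p,q}_s} \lesssim \|f\|_{F^{p,q}_s}$. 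The main obstacles I expect are the two norm equivalences invoked above -- the $R^k$-versus-$(1+R)^k$ substitution and the identification of the weighted $T^{p,q}$-norm with $F^{p,q}_{s-k}$ -- but both should be routine consequences of the definitions recalled in Section~\ref{sec:preliminaries}.
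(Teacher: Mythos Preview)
Your proposal is correct and follows essentially the same approach as the paper: both directions rely on Proposition~\ref{prop:grown} (parts \eqref{item:grown2}, \eqref{item:grown5} for the forward implication), and the converse uses Leibnitz's formula \eqref{eqn:Leibnitz} together with Proposition~\ref{prop:grown}\eqref{item:grown4} to shift each $(1+R)^j g$ into $Mult(F^{p,q}_{s+j-k}\to F^{p,q}_{s-k})$. Your write-up is simply more explicit than the paper's two-line argument, in particular your separate treatment of the $j=0$ term via $g\in H^\infty$ (equivalently $g\in Mult(F^{p,q}_{s-k})$ since $s-k<0$), and the norm identifications you flag as potential obstacles are indeed routine.
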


\begin{proof}
By Proposition \ref{prop:grown}\eqref{item:grown5}, it is clear that if $g\in Mult(F^{p,q}_s)$ then the conditions are satisfied. The converse follows from the Leibnitz's formula \eqref{eqn:Leibnitz} 
and the fact that, by Proposition \ref{prop:grown}\eqref{item:grown4}, 
$(1+R)^j g\in Mult(F^{p,q}_{s+j-k}\to F^{p,q}_{s-k})$.
\end{proof}

In Section \ref{sec:pointwise} we will obtain some characterizations of $Mult(F^{p,q}_s)$, 
which in particular give that $g\in Mult(F^{p,q}_s)$ if and only if $g\in H^\infty$ 
and for some (any) nonnegative integer $k$, $(1+R)^k g\in Mult(F^{p,q}_{s}\to F^{p,q}_{s-k})$. 
This fact is well known for Hardy-Sobolev and Besov spaces.

\section{The operator $\PN^{N,M}$ on $F^{p,q}_s$}\label{sec:operatorPNMF}

\subsection{Differential and integral operators}

For $N>0$, we denote by $d\nu_N$ the probabilistic measure 
$$
d\nu_N(z):=c_N(1-|z|^2)^{N-1}d\nu(z), \qquad 
c_1=1, \qquad c_{N+1}=\frac{n+N}N c_N
$$
In order to unify the statements, for $N=0$ we define 
$d\nu_0=d\sigma$.

\begin{defn}
For $N>0$ and $M>-n$, we consider the following integral operators:
$$
\PN^{N,M}(\varphi)(z):=\int_\B \varphi(w) \PN^{N,M}(z,w)d\nu(w),\quad \PN^{N,M}(z,w):=c_N\frac{(1-|w|^2)^{N-1}}{(1-z\overline w)^{n+M}}.
$$

$$
\PP^{N,M}(\varphi)(z):=\int_\B \varphi(w)\PP^{N,M}(z,w)d\nu(w),\quad \PP^{N,M}(z,w):=|\PN^{N,M}(z,w)|.
$$

We extend the definition to the case $N=0$ by 
$$
\PN^{0,M}(\varphi)(z):=\int_\bB \frac{\varphi(\z)}{(1-z\overline \z)^{n+M}}d\sigma(\z),\quad \PP^{0,M}(\varphi)(z):=\int_\bB \frac{\varphi(\z)}{|1-z\overline \z|^{n+M}}d\sigma(\z).
$$

If $N=M$, then we denote $\PN^{N,N}$ and $\PP^{N,N}$ by $\PN^N$ and  $\PP^N$, respectively. 
\end{defn}

Observe that  ${\mathcal C}_s=\PN^{0,-s}$ and ${\mathcal I}_s=\PP^{0,-s}$.

Throughout the paper, the operators $\PN^{N,M}$ considered satisfy that $N\geq 0$, $M>-n$. Consequently, in general, we will not include these hypotheses in the statement of our results, except on the cases, where the range has to be restricted.

The next representation formula is well known.
\begin{prop}\label{prop:repPNN}
If either $f\in B^1_{-N}$ for $N>0$ or $f\in H^1$ for $N=0$, then $f=\PN^{N}(f)$.
\end{prop}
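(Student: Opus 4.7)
The identity $f=\PN^N(f)$ is the classical weighted Bergman reproducing formula (for $N>0$) and the Cauchy reproducing formula (for $N=0$). The plan has three steps: verify the identity on monomials, extend to functions holomorphic in a neighbourhood of $\overline\B$ by density, then pass to the limit via dilations using the $B^1_{-N}$ (resp. $H^1$) norm.

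First I would check that $\PN^N(z^\alpha)=z^\alpha$ for every multi-index $\alpha$. Expanding the kernel as
$$
\frac{1}{(1-z\overline w)^{n+N}}=\sum_{\beta}\frac{(n+N)_{|\beta|}}{\beta!}\,z^\beta\overline w^{\,\beta},
$$
substituting into $\PN^N(z^\alpha)(z)$, and using the orthogonality $\int_\B w^\alpha\overline w^{\,\beta}(1-|w|^2)^{N-1}d\nu(w)=0$ for $\alpha\neq\beta$ (obtained by integrating out angular variables on $\bB$), the problem reduces to a single Beta integral in the radial variable. The normalization constants $c_N$ defined via $c_1=1$, $c_{N+1}=\frac{n+N}{N}c_N$ are tailored precisely so that each diagonal term equals $1$, which gives $\PN^N(z^\alpha)=z^\alpha$. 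For $N=0$ the same expansion combined with orthogonality of monomials on $\bB$ under $d\sigma$ gives the Cauchy reproducing formula on polynomials.

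Next, for $0<r<1$, set $f_r(w):=f(rw)$. Since $f_r$ is holomorphic on a neighbourhood of $\overline\B$, it is the uniform limit on $\overline\B$ of its Taylor polynomials. Uniform convergence allows interchanging the integral $\PN^N$ with the sum, so the previous step yields $\PN^N(f_r)=f_r$ pointwise on $\B$. It only remains to take $r\to 1^-$ at a fixed $z\in\B$: the left-hand side converges to $f(z)$ by continuity, and for the right-hand side I would apply dominated convergence. In the case $N=0$, $f\in H^1$ has nontangential boundary values $f^*\in L^1(\bB)$ and $f_r\to f^*$ in $L^1(\bB)$, while the Cauchy kernel $(1-z\overline\z)^{-n}$ is bounded in $\z$ for $z$ fixed. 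In the case $N>0$, the $B^1_{-N}$ norm with $k=0$ reads $\|f\|_{B^1_{-N}}=\int_\B|f(w)|(1-|w|^2)^{N-1}d\nu(w)$, and the kernel $(1-z\overline w)^{-(n+N)}$ is a bounded function of $w\in\B$ for $z$ fixed; an integrable majorant for $f_r$ follows from monotonicity of the weighted integral under dilation.

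The main obstacle is the final $L^1$ convergence of $f_r$, together with the existence of the dominating function, in the two settings. Both are classical: in the Hardy case one invokes the standard $H^1$ convergence of dilates to boundary values; in the weighted Bergman case the continuity of translations/dilations in $L^1\bigl((1-|w|^2)^{N-1}d\nu\bigr)$, combined with $\|f_r\|_{B^1_{-N}}\lesssim\|f\|_{B^1_{-N}}$ uniformly in $r$, provides the needed majorant. Once these standard facts are in hand, the dominated convergence argument closes the proof.
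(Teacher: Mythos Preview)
The paper does not give a proof of this proposition; it is simply introduced with ``The next representation formula is well known'' and left at that. Your argument is the standard one and is essentially correct: check the identity on monomials using orthogonality and the normalizing choice of $c_N$, deduce it for $f_r(w)=f(rw)$ by uniform convergence of Taylor series, and then let $r\to 1^-$.

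One small point deserves tightening. In the case $N>0$ you appeal to dominated convergence via ``an integrable majorant for $f_r$ from monotonicity of the weighted integral under dilation''. A pointwise majorant $\sup_r|f_r|\in L^1(d\nu_N)$ is not obvious (especially for $0<N<1$), and uniform boundedness of $\|f_r\|_{B^1_{-N}}$ alone does not give $L^1$ convergence. The clean way to close this step is to prove $f_r\to f$ in $L^1(d\nu_N)$ directly: write the norm in polar coordinates and use that the spherical means $M(\rho):=\int_{\bB}|f(\rho\zeta)|\,d\sigma(\zeta)$ are increasing in $\rho$ (subharmonicity of $|f|$). Then
\[
\int_{\bB}|f_r(\rho\zeta)-f(\rho\zeta)|\,d\sigma(\zeta)\le M(r\rho)+M(\rho)\le 2M(\rho),
\]
and since $\int_0^1 M(\rho)(1-\rho^2)^{N-1}\rho^{2n-1}\,d\rho<\infty$ and the left side tends to $0$ for each fixed $\rho$, dominated convergence in the radial variable gives $\|f_r-f\|_{L^1(d\nu_N)}\to 0$. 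As the kernel $(1-z\overline w)^{-(n+N)}$ is bounded in $w$ for fixed $z\in\B$, this yields $\PN^N(f_r)(z)\to\PN^N(f)(z)$ and completes the proof. For $N=0$ your $H^1$ argument is fine as stated.
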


It will be useful to introduce the following differential operators, 
which play the same role than  $(1+R)^k$, but allow to simplify some computations.

\begin{defn}
Let $k$ be a positive integer and let $t>0$. We denote by  $R^k_t$ the differential operator of order $k$ defined by 
$$
R^k_{t}:=\left(1+\frac{R}{t+k-1}\right)\cdots\left(1+\frac{R}{t}\right).
$$

If $k=0$, then $R^0_{t}$ denotes the identity operator.
\end{defn}

These operators satisfy the following properties:
\begin{lem} \label{lem:Rkt}
Let $k,m$ be nonnegative integers and let $t>0$. Then, we have:
\begin{enumerate}
	\item $R^m_{t+k}\circ R^k_t=R^{k+m}_t$.
	\item If $n+M>0$, then 
\begin{equation*}
R^{k}_{n+M,z}\frac{1}{(1-z\overline w)^{n+M}}=\frac{1}{(1-z\overline w)^{n+M+k}}.
\end{equation*}
	\item \label{eqn:Rkt3} If $f\in B^1_{-N}$, then $R^k_{n+N}f(z)=R^k_{n+N}\PN^Nf(z)=\PN^{N,N+k}(f)$.
	\item If $1\le p,q<\infty$, then the operator $R^k_t$ is a bijective operator from $F^{p,q}_s$ to $F^{p,q}_{s-k}$
\end{enumerate}
\end{lem}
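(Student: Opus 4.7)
The plan is to handle the four assertions in order, with (i) and (ii) being formal, (iii) following from the representation formula, and (iv) being the substantive claim.

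For (i), I would simply unwind the definition: the product $R^{k+m}_t=\bigl(1+\tfrac{R}{t+k+m-1}\bigr)\cdots\bigl(1+\tfrac{R}{t}\bigr)$ contains $k+m$ factors; grouping the first $m$ gives $R^m_{t+k}$ and the last $k$ give $R^k_t$, so the identity is a telescoping fact (the factors commute because each $1+R/a$ is diagonal on homogeneous components).

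For (ii), I would expand $(1-z\overline{w})^{-(n+M)}=\sum_{j\ge 0}\binom{n+M+j-1}{j}(z\overline{w})^j$ and use that on a homogeneous polynomial of degree $j$ each factor $1+R/(n+M+i)$ acts as multiplication by $(n+M+i+j)/(n+M+i)$. The product over $i=0,\dots,k-1$ gives exactly the ratio of binomial coefficients that converts the expansion of $(1-z\overline{w})^{-(n+M)}$ into that of $(1-z\overline{w})^{-(n+M+k)}$, using the telescoping $\prod_{i=0}^{k-1}(n+M+i+j)=\Gamma(n+M+k+j)/\Gamma(n+M+j)$.

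For (iii), Proposition \ref{prop:repPNN} gives $f=\PN^N(f)$ under the stated hypothesis. I would then differentiate under the integral sign (the estimates in Lemma \ref{lem:estP}, together with the assumption $f\in B^1_{-N}$, justify this readily) and apply (ii) with $M=N$ to move $R^k_{n+N}$ inside the integral, producing the kernel $(1-z\overline{w})^{-(n+N+k)}$ and hence $\PN^{N,N+k}(f)$.

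For (iv), my plan is to compare $R^k_t$ with $(1+R)^k$, for which the corresponding bijectivity from $F^{p,q}_s$ to $F^{p,q}_{s-k}$ is built into the definition of the norms (different integers $k>s$ give equivalent norms). Since both operators act diagonally on the homogeneous components $f_j$, with respective eigenvalues $\lambda_j=\prod_{i=0}^{k-1}(t+i+j)/(t+i)$ and $(1+j)^k$, and $\lambda_j/(1+j)^k$ is bounded above and below, I would write $R^k_t=(1+R)^k\circ S_t$ where $S_t$ is the Fourier multiplier with symbol $\lambda_j/(1+j)^k$. To conclude it suffices to check that $S_t$ and $S_t^{-1}$ are bounded on every $F^{p,q}_s$; equivalently, one can argue directly that for any integer $m>s$, the expressions $\|(1-|z|^2)^{m-(s-k)}(1+R)^m R^k_t f\|_{T^{p,q}}$ and $\|(1-|z|^2)^{m+k-s}(1+R)^{m+k} f\|_{T^{p,q}}$ are equivalent, since the difference $R^k_t-c_k(1+R)^k$ (for the appropriate constant $c_k$) is a polynomial in $R$ of degree $<k$, and such lower-order terms produce norms controlled by $\|f\|_{F^{p,q}_s}$ using the embedding $F^{p,q}_s\subset F^{p,q}_{s-j}$ for $j<k$.

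The only step where I expect genuine work is the surjectivity in (iv); boundedness is essentially bookkeeping, but showing that $R^k_t$ hits \emph{all} of $F^{p,q}_{s-k}$ requires either inverting the diagonal multiplier $\lambda_j$ and checking the inverse is a bounded Fourier multiplier on $F^{p,q}_s$, or equivalently producing an explicit inverse via a Bergman-type integral operator (compatible with (iii), which identifies $R^k_{n+N}$ with $\PN^{N,N+k}$ on a dense class).
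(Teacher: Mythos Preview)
The paper states this lemma without proof, treating all four items as elementary; so there is no ``paper's proof'' to compare against. Your arguments for (i)--(iii) are correct and essentially the only natural ones.

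For (iv), your boundedness argument is fine: writing $R^k_t$ as a linear combination $\sum_{j=0}^k a_j(1+R)^j$ with $a_k\ne 0$ and using that each $(1+R)^j$ maps $F^{p,q}_s$ boundedly into $F^{p,q}_{s-j}\subset F^{p,q}_{s-k}$ gives the forward bound. Your remark that the lower-order difference $R^k_t-c_k(1+R)^k$ is controlled via the embeddings $F^{p,q}_s\subset F^{p,q}_{s-j}$ yields boundedness only; it does not by itself give the reverse inequality or surjectivity, as you rightly note.

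On surjectivity, your two sketched routes are both viable, with one caveat. The route via (iii) and the bijectivity of the Bergman-type operators (which the paper establishes later in Proposition~\ref{prop:PNMbij}) identifies $R^k_t$ with $\PN^{N,N+k}$ only when $t=n+N$ with $N\ge 0$, i.e.\ $t\ge n$; for $0<t<n$ this does not directly apply. The multiplier route covers all $t>0$: the inverse factors as $(R^k_t)^{-1}=(R^1_t)^{-1}\cdots(R^1_{t+k-1})^{-1}$, and each factor has the explicit representation
\[
(R^1_a)^{-1}g(z)=a\int_0^1 r^{a-1}g(rz)\,dr,
\]
from which boundedness $F^{p,q}_{s-1}\to F^{p,q}_s$ follows by a Hardy-type estimate in tent spaces (or, equivalently, by checking that the multiplier $(1+j)\cdot a/(a+j)$ and its reciprocal arise from Bergman-type kernels covered by Lemma~\ref{lem:PPF}). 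This is routine, which is presumably why the paper omits the argument.
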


Observe that 
from \eqref{eqn:Rkt3} 
the integral operator $\PN^{N,N+k}$ acts on $B^1_{-N}$ as a differential operator of order $k$.

The next result will be used to obtain norm-estimates of the operators $\PN^{N,M}$ and $\PP^{N,M}$.

\begin{lem}\label{lem:estP}  Let $N>0$ and $M,L\ge -n$. Then the integral 
$$
I^N_{M,L}(z,u):=\int_B\frac{(1-|w|^2)^{N-1}}{|1-u\overline w|^{n+M}|1-z\overline w|^{n+L}}d\nu(w)
$$
satisfies the following estimates:

If $M>N>L\ge -n$, then
$\displaystyle{
I^N_{M,L}(z,u)\lesssim 
\frac{(1-|u|^2)^{N-M}}{|1-z\overline u|^{n+L}}.
}$

If $M,L>N$, then
$\displaystyle{
I^N_{M,L}(z,u)
\lesssim \frac{(1-|u|^2)^{N-M}}{|1-z\overline u|^{n+L}}+\frac{(1-|z|^2)^{N-L}}{|1-z\overline u|^{n+M}}.
}$

If $M,L<N\ne n+M+L$, then
$\displaystyle{
I^N_{M,L}(z,u)
\lesssim 1+\frac{1}{|1-z\overline u|^{n+M+L-N}}.
}$

If $M,L<N=n+M+L$, then 
$\displaystyle{
I^N_{M,L}(z,u)
\lesssim \log \frac{e}{|1-z\overline u|}.
}$
\end{lem}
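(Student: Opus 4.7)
I would prove this result by reducing each case to the one-pole Forelli--Rudin estimate
$$
\int_\B \frac{(1-|w|^2)^{N-1}}{|1-z\overline w|^{n+L}}\,d\nu(w)\sim
\begin{cases}1,& L<N,\\ \log\bigl(e/(1-|z|^2)\bigr),& L=N,\\ (1-|z|^2)^{N-L},& L>N,\end{cases}
$$
(Proposition 1.4.10 in Rudin's book), combined with the quasi-triangle inequality $|1-z\overline u|\leq C(|1-z\overline w|+|1-u\overline w|)$ for the Kor\'anyi pseudodistance on $\overline{\B}$, together with the elementary bounds $|1-z\overline u|\geq \tfrac12\max(1-|u|^2,\,1-|z|^2)$ and $(1-|w|^2)\leq 2|1-z\overline w|$.

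The main step is to decompose $\B=\B_1\cup\B_2$ with $\B_1=\{w:|1-z\overline w|\geq c|1-z\overline u|\}$ for a small $c>0$ adapted to the quasi-triangle constant. On $\B_1$, since $L\geq -n$, one pulls out $|1-z\overline w|^{-(n+L)}\lesssim|1-z\overline u|^{-(n+L)}$ and applies the one-pole estimate with pole at $u$. On $\B_2$, the quasi-triangle inequality forces $|1-u\overline w|\sim|1-z\overline u|$, so one pulls out $|1-u\overline w|^{-(n+M)}\sim|1-z\overline u|^{-(n+M)}$ and dyadically decomposes the shell $\{|1-z\overline w|<c|1-z\overline u|\}$ at scales $r=2^{-k}|1-z\overline u|$ down to $r\sim 1-|z|^2$, summing the resulting geometric (or logarithmic) series. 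A symmetric split based on $|1-u\overline w|$ covers the configurations in which $u$ and $z$ play interchangeable roles.

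Each of the four cases then follows by combining these two estimates. For $M>N>L\geq -n$, the $\B_1$ integral produces $(1-|u|^2)^{N-M}/|1-z\overline u|^{n+L}$, and the $\B_2$ integral is absorbed using $|1-z\overline u|\gtrsim 1-|u|^2$ and $M>N$. For $M,L>N$, two symmetric bulk bounds combine into the stated two-term right-hand side. For $M,L<N$ with $N\neq n+M+L$, the one-pole integrals are both $O(1)$ and the diagonal dyadic sum produces $1+|1-z\overline u|^{-(n+M+L-N)}$. For $M,L<N$ with $N=n+M+L$, the dyadic sum becomes logarithmic in the number of scales between $1-|z|^2$ (or $1-|u|^2$) and $|1-z\overline u|$, yielding the critical bound $\log(e/|1-z\overline u|)$.

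\textbf{The main obstacle} is the sharp bookkeeping in the mixed case $M>N>L$: a crude $\B_2$ estimate yields an auxiliary term of order $|1-z\overline u|^{-(n+M)}$ that is not in general dominated by $(1-|u|^2)^{N-M}/|1-z\overline u|^{n+L}$. Overcoming this requires the refined dyadic decomposition exploiting $(1-|w|^2)\lesssim|1-z\overline w|$ on each shell: the geometric sum, convergent thanks to $L<N$, produces the missing factor $|1-z\overline u|^{N-L}$, which combined with $|1-z\overline u|\gtrsim 1-|u|^2$ and $M>N$ absorbs the auxiliary term into the bulk bound. Similar care is needed at the critical exponent $N=n+M+L$, where the dyadic scales must be tracked relative to $|1-z\overline u|$ rather than to $1-|u|^2$ or $1-|z|^2$ separately, so that the logarithmic divergence has the right constant.
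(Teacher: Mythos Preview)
Your proposal is correct and follows essentially the same strategy as the paper: reduce to the one-pole Forelli--Rudin estimate via the quasi-triangle inequality for $|1-z\overline w|$. The only difference is the choice of splitting. The paper decomposes $\B$ into $\Omega_1=\{w:|1-z\overline w|\le |1-u\overline w|\}$ and its complement, comparing the two pole-distances directly; on $\Omega_1$ one has $|1-u\overline w|\approx |1-z\overline u|+|1-z\overline w|$ (and symmetrically on $\Omega_2$), so each piece is immediately a one-pole integral with a shifted exponent, and no explicit dyadic summation is needed. Your split $\B_1=\{|1-z\overline w|\ge c|1-z\overline u|\}$ compares a pole-distance to the fixed scale $|1-z\overline u|$ instead, which is why you then need the dyadic decomposition on $\B_2$ to recover the correct power of $|1-z\overline u|$ in the mixed case $M>N>L$. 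Both routes give the same bounds; the paper's symmetric split is a bit cleaner and sidesteps the ``main obstacle'' you identify, since the equivalence $|1-u\overline w|\approx |1-z\overline u|+|1-z\overline w|$ on $\Omega_1$ already carries the factor $|1-z\overline u|$ into the one-pole estimate without further work.
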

 
 If $u=0$, then the above estimates corresponds to Proposition 1.4.10 in \cite{rudin}. 
 The proof of the  estimates in the lemma can be deduced easily from this case by 
 decomposing the integral on $\B$ in two parts, one over the set $\Omega_1=\{w\in\B: |1-z\overline w|\le |1-u\overline w|\}$ and the other over $\Omega_2=\B\setminus \Omega_1$. 
  Using that $|1-z\overline u|+|1-z\overline w|\approx |1-u\overline w|$  for $w\in\Omega_1$, and the analogous equivalence for $w\in \Omega_2$ we obtain the estimates.
 
The next lemma was proved in \cite[Proposition 2.8]{Or-Fa1}.
\begin{lem} \label{lem:PPF}
Let $1< p<\infty$, $1\le q<\infty$ and $N,L>0$. If $N+L\ge M$, then the operator
$
\psi\to (1-|z|^2)^{L}\PP^{N,M}(\psi)(z)
$ 
is bounded on $T^{p,q}$.
\end{lem}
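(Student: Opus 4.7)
The plan is to establish $\|(1-|z|^2)^L\PP^{N,M}(\psi)\|_{T^{p,q}}\lesssim \|\psi\|_{T^{p,q}}$ via a pointwise H\"older estimate, followed by Fubini and a tent-integral computation in the spirit of Lemma \ref{lem:estP}; the slack $N+L-M\ge 0$ will be used to make the exponents match the definition of $T^{p,q}$.

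Fix a small auxiliary parameter $\e>0$ with $\e<N$ (and $\e q'/q\le M-N$ when $M>N$). First I would apply H\"older with exponents $q,q'$ inside the integral defining $\PP^{N,M}(|\psi|)(z)$, distributing the factor $(1-|w|^2)^{\pm\e/q}$ between the two pieces, and then invoke Lemma \ref{lem:estP} with $u=0$ to bound the piece that does not contain $\psi$. This yields the pointwise inequality
\begin{equation*}
((1-|z|^2)^L \PP^{N,M}(|\psi|)(z))^q \lesssim (1-|z|^2)^{\alpha} \int_\B \frac{|\psi(w)|^q (1-|w|^2)^{N-1-\e}}{|1-z\overline w|^{n+M}}\, d\nu(w),
\end{equation*}
with $\alpha := Lq + (N-M)q/q' + \e = q(L+N-M) + (M-N) + \e > 0$, where the assumption $L\ge M-N$ guarantees $\alpha\ge \e>0$.

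Next I would integrate this bound over $\Gamma(\z)$ against $d\nu(z)/(1-|z|^2)^{n+1}$ and exchange the order of integration by Fubini. The problem reduces to controlling, for each pair $(\z,w)$, the tent integral
\begin{equation*}
J(\z,w) := \int_{\Gamma(\z)} \frac{(1-|z|^2)^{\alpha-n-1}}{|1-z\overline w|^{n+M}}\, d\nu(z).
\end{equation*}
Parametrising $\Gamma(\z)$ by $r=1-|z|^2$ (a tangential slice of size $\sim r^n$) and using $|1-z\overline w|\sim\max(r,|1-w\overline \z|)$ for $z\in\Gamma(\z)$, a direct computation that splits the $r$-integration at $r\sim |1-w\overline \z|$ gives $J(\z,w)\lesssim |1-w\overline \z|^{\alpha-n-M}$, with a bounded or logarithmic variant in borderline cases; the convergence at infinity in $r$ is secured by $\alpha<n+M$, which holds since $\e<N$.

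Finally, I would reduce the $L^{p/q}(\bB)$-bound of
\begin{equation*}
F(\z):=\int_\B |\psi(w)|^q (1-|w|^2)^{N-1-\e} |1-w\overline \z|^{\alpha-n-M}\, d\nu(w)
\end{equation*}
to a standard nonisotropic fractional-integral bound on $\bB$. The identity $1\sim (1-|w|^2)^{-n}\int_\bB \chi_{\Gamma(\z_0)}(w)\,d\sigma(\z_0)$ (coming from $\sigma\{\z_0:w\in\Gamma(\z_0)\}\sim (1-|w|^2)^n$) allows a further Fubini to express $F(\z)$ as a positive convolution-type operator on $\bB$ acting on $\z_0\mapsto A_q(\psi)(\z_0)^q$, where $A_q(\psi)(\z)=\bigl(\int_{\Gamma(\z)}|\psi|^q d\nu/(1-|w|^2)^{n+1}\bigr)^{1/q}$; the $L^{p/q}(\bB)$-boundedness of that operator is a standard nonisotropic Riesz-type estimate, and the exponents balance exactly thanks to the slack $N+L-M\ge 0$. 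The hard part will be this last reduction: the exponents of $(1-|w|^2)$ and $|1-w\overline \z|$ have to fit precisely the geometry of the admissible regions used to define $T^{p,q}$, and the auxiliary $\e>0$ must be tuned to both the margin $N+L-M$ and the ranges of $p,q$.
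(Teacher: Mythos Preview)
The paper does not give a proof of this lemma; it is quoted verbatim from \cite[Proposition 2.8]{Or-Fa1}. So there is no in-paper argument to compare your proposal against.

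Your H\"older--Fubini--Riesz scheme is the natural elementary route, and it does go through when $N+L>M$ strictly (after first reducing to the smallest admissible $L$ by monotonicity of $(1-|z|^2)^{L}$). But there is a genuine gap at the endpoint $N+L=M$, and this endpoint is exactly the case the paper uses: in the proofs of Propositions~\ref{prop:PNMonF} and~\ref{prop:PNMmaps} the parameters are arranged so that the $(1-|z|^2)$-weight, the $\tilde N$ in $\PP^{\tilde N,\tilde M}$, and the $\tilde M$ satisfy $\tilde N+L=\tilde M$ on the nose. In that critical case your own bookkeeping gives $\alpha=M-N+\e$, so after your ``insert $\chi_{\Gamma(\z_0)}$ and Fubini'' step the combined weight $(1-|w|^2)^{N-\e}|1-w\bar\z|^{\alpha-n-M}$ collapses, for $w\in\Gamma(\z_0)$, to exactly $|1-\z_0\bar\z|^{-n}$. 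The resulting operator $g\mapsto\int_{\bB}|1-\z_0\bar\z|^{-n}g(\z_0)\,d\sigma(\z_0)$ is the borderline non-integrable kernel on $\bB$; it is \emph{not} bounded on any $L^{r}(\bB)$, so the ``standard nonisotropic Riesz-type estimate'' you invoke is simply unavailable here. (Your side remark ``$\alpha<n+M$ since $\e<N$'' is also only valid after the reduction to minimal $L$; for large $L$ it is false, though this is harmless once the reduction is made.)

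For $p=q$ the difficulty evaporates, because $\|F\|_{L^{p/q}(\bB)}=\|F\|_{L^1(\bB)}$ and a direct Fubini over $\B$ (without passing to $\bB$) closes the estimate; the problem is genuinely the mixed case $p\ne q$. There one needs real tent-space machinery---the Coifman--Meyer--Stein atomic decomposition of $T^{p,q}$, a vector-valued Calder\'on--Zygmund argument, or the specific techniques of \cite{Or-Fa1}---rather than a Schur/Riesz bound on $\bB$.
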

 
\begin{prop} \label{prop:intpartsphi}
Let $\varphi\in \mathcal{C}^k(\overline\B)$. Then, for any $N\ge 0$ we have:
$$
\int_\B \varphi d\nu_N=\int_\B R^k_{n+N}\varphi d\nu_{N+k} =\int_\B \overline{R^k_{n+N}}\varphi d\nu_{N+k}.
$$

In particular, if $\varphi\in \mathcal{C}^{k+m}(\overline\B)$, then 
\begin{equation}\label{eqn:intpartsphi}
\int_\B \varphi d\nu_N=\int_\B \overline{R^m_{n+N+k}}R^k_{n+N}\varphi d\nu_{N+k+m}. 
\end{equation}
\end{prop}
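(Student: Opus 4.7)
My plan is to reduce the whole proposition to a single $k=1$ identity proved by Stokes' theorem applied to a carefully chosen vector field with weight $(1-|z|^2)^N$, and then iterate and complex-conjugate.

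\medskip

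\textbf{Step 1 (base identity via the divergence theorem).} Using $\partial/\partial\bar z_j = \frac{1}{2}(\partial_{x_j}+i\partial_{y_j})$ and the fact that the outward real unit normal at $z\in\bB$ has complex coordinates $z_j$, together with $\mathrm{area}(\bB)/\mathrm{vol}(\B)=2n$ in our normalizations, I get the complex form of the divergence theorem
$$
\int_\B \partial_{\bar z_j}F\,d\nu=n\int_\bB F z_j\,d\sigma,\qquad F\in\mathcal{C}^1(\overline\B).
$$
I apply this with $F_j:=\varphi\,\bar z_j\,(1-|z|^2)^N$ and sum over $j$. For $N>0$ the boundary term vanishes, while for $N=0$ the boundary term equals $n\int_\bB \varphi|z|^2\,d\sigma=n\int_\bB\varphi\,d\sigma$, which is exactly the left side for $N=0$. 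A direct product-rule computation using $|z|^2=1-(1-|z|^2)$ gives
$$
\sum_{j=1}^n\partial_{\bar z_j}\bigl[\bar z_j(1-|z|^2)^N\bigr]=(n+N)(1-|z|^2)^N-N(1-|z|^2)^{N-1},
$$
so expanding the derivative of the $F_j$ yields, for $N\ge 0$,
$$
\int_\B(n+N+\overline R)\varphi\,(1-|z|^2)^N\,d\nu=N\int_\B\varphi\,(1-|z|^2)^{N-1}\,d\nu\quad(\text{or the }d\sigma\text{ integral if }N=0).
$$
Multiplying by $c_N/N$ and invoking the recursion $c_{N+1}=\frac{n+N}{N}c_N$, this is precisely the $k=1$ case
$$
\int_\B\varphi\,d\nu_N=\int_\B\overline{R^1_{n+N}}\varphi\,d\nu_{N+1}.
$$

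\textbf{Step 2 (iteration and conjugation).} Iterating the base identity with $N$ replaced by $N,N+1,\ldots,N+k-1$, and using the composition rule $R^m_{t+k}\circ R^k_t=R^{k+m}_t$ from Lemma \ref{lem:Rkt}(i), produces the $k$-th order statement
$$
\int_\B\varphi\,d\nu_N=\int_\B\overline{R^k_{n+N}}\varphi\,d\nu_{N+k}.
$$
The twin identity with $R^k_{n+N}$ in place of $\overline{R^k_{n+N}}$ is obtained by applying the base identity to $\bar\varphi$ and taking complex conjugates, using that $\overline{\overline R\,\bar\psi}=R\psi$ and that $R^1_{n+N}$ has real coefficients. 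Alternatively it comes from the symmetric Stokes identity $\int_\B\partial_{z_j}F\,d\nu=n\int_\bB F\bar z_j\,d\sigma$ applied to $F_j=\varphi z_j(1-|z|^2)^N$.

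\textbf{Step 3 (mixed formula \eqref{eqn:intpartsphi}).} Given $\varphi\in\mathcal{C}^{k+m}(\overline\B)$, first apply the $R^k_{n+N}$ identity to get $\int_\B\varphi\,d\nu_N=\int_\B R^k_{n+N}\varphi\,d\nu_{N+k}$; since $R^k_{n+N}\varphi\in\mathcal{C}^m(\overline\B)$, the $\overline{R^m_{n+N+k}}$ identity (with $N$ replaced by $N+k$) can be applied to it, producing \eqref{eqn:intpartsphi}.

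\medskip

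The only technical point requiring care is keeping the normalization of the divergence theorem consistent and verifying that the boundary term either vanishes ($N>0$) or reproduces the surface integral ($N=0$); once those are fixed the algebraic identification with the constants $c_N,c_{N+1}$ falls out immediately from the recursion. All remaining steps are bookkeeping via Lemma \ref{lem:Rkt}.
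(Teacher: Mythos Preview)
Your proof is correct and follows essentially the same route as the paper: reduce to the $k=1$ identity via integration by parts (Stokes/divergence) on the ball, then iterate and conjugate. The only cosmetic difference is that you handle $N>0$ and $N=0$ uniformly via the complex divergence theorem and obtain the $\overline{R}$-version first, whereas the paper derives the $R$-version first (writing $c_N(1-|w|^2)^{N-1}$ in terms of $(1-|w|^2)^N$ and $R(1-|w|^2)^N$ for $N>0$, and using an explicit $(2n{-}1)$-form for $N=0$) and then conjugates.
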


\begin{proof}
Note that in order to prove  the first equality it is enough to consider $k=1$ and reiterate the formula.  The second identity follows  conjugating the first one. 

Assume $N>0$.
Since $c_{N+1}=\frac{n+N}{N}c_N$, $$c_N(1-|w|^2)^{N-1}=\frac{N}{n+N}c_{N+1}(1-|w|^2)^N-\frac{c_{N+1}}{n+N} R(1-|w|^2)^N,$$ by integration by parts we have
\begin{align*}
\int_\B\varphi d\nu_N
&=\frac{N}{n+N} \int_\B \varphi d\nu_{N+1} 
+\frac{1}{n+N}\sum_{j=1}^n\int_\B \p_j(w_j\varphi(w))d\nu_{N+1}(w)\\
&=\int_\B \varphi d\nu_{N+1}+\frac{1}{n+N}\int_\B R\varphi d\nu_{N+1}.
\end{align*}
This proves  the case $N>0$.

The case $N=0$ follows from the Stokes' theorem. In this case there exists a constant $a_n$ such that 
$d\sigma(\z)=a_n \omega(\z)$ where 
$$
\omega(\z)=\sum_{j=1}^{n} (-1)^{j-1}\z_j d\z_1\land \cdots d\z_{j-1}\land d\z_{j+1}\land \cdots d\z_{n} 
\land d\overline{\z}_1\land \cdots d\overline{\z}_n,
$$
and thus 
\begin{align*}
\int_{\bB}\varphi d\sigma =a_n\int_{\B} d(\varphi\omega)
=b_n \int_{\B}\left(1+\frac{R}{n}\right)\varphi d\nu.
\end{align*}
If in this result we consider the constant function $\varphi=1$, we obtain $b_n=1$, which ends the proof.
\end{proof}

\begin{cor}\label{cor:intpartsP}
Let $f\in B^1_{-N}$ if $N>0$ and $f\in H^1$ if $N=0$.

Then, for any nonnegative integers $k,m$, we have
\begin{equation}\label{eqn:intpartsP0}
\PN^{N,M}(f)(z)=R^m_{n+N+k}\left[\PN^{N+k+m,M} (R^k_{n+N}f)\right](z).
\end{equation}
\end{cor}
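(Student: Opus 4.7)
The plan is to derive the formula from the integration-by-parts identity \eqref{eqn:intpartsphi} of Proposition \ref{prop:intpartsphi}, applied for each fixed $z\in\B$ to the test function $\varphi(w):=f(w)K_M(z,w)$, where $K_M(z,w):=(1-z\overline w)^{-(n+M)}$. First I would establish the formula for $f$ a holomorphic polynomial (or, more generally, $f\in\mathcal{C}^\infty(\overline\B)$ holomorphic on $\B$), so that $\varphi\in\mathcal{C}^\infty(\overline\B)$ and \eqref{eqn:intpartsphi} applies directly.

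Two observations drive the calculation. First, since $f$ is holomorphic in $w$ and $K_M(z,\cdot)$ depends only on $\overline w$, the product rule together with the fact that $R_w$ annihilates antiholomorphic factors gives
$$R^k_{n+N,w}\varphi(w)=(R^k_{n+N}f)(w)\,K_M(z,w),$$
and, similarly, since $\overline R_w$ annihilates holomorphic factors,
$$\overline{R^m_{n+N+k,w}}\bigl[(R^k_{n+N}f)(w)K_M(z,w)\bigr]=(R^k_{n+N}f)(w)\,\overline{R^m_{n+N+k,w}}K_M(z,w).$$
Second, the reproducing-type kernel satisfies the symmetry $\overline R_w K_M(z,w)=R_z K_M(z,w)$, since both sides equal $(n+M)z\overline w(1-z\overline w)^{-(n+M+1)}$. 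Iterating this identity through each shifted factor $(1+R/(n+N+k+j))$ defining $R^m_{n+N+k}$ yields $\overline{R^m_{n+N+k,w}}K_M(z,w)=R^m_{n+N+k,z}K_M(z,w)$.

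Substituting these two facts into \eqref{eqn:intpartsphi} gives, for each fixed $z\in\B$,
$$\PN^{N,M}(f)(z)=\int_\B \varphi\,d\nu_N=\int_\B (R^k_{n+N}f)(w)\,R^m_{n+N+k,z}K_M(z,w)\,d\nu_{N+k+m}(w).$$
Since $|z|<1$ keeps the integrand and all its $z$-derivatives uniformly bounded in $w\in\overline\B$, standard differentiation under the integral sign lets me move $R^m_{n+N+k,z}$ outside the $w$-integral, producing exactly $R^m_{n+N+k,z}\bigl[\PN^{N+k+m,M}(R^k_{n+N}f)(z)\bigr]$.

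To pass from smooth $f$ to general $f\in B^1_{-N}$ (or $f\in H^1$ when $N=0$), I would use the dilations $f_r(w):=f(rw)$ as $r\uparrow 1^-$, apply the identity already proved to each $f_r$, and take limits. For fixed $z\in\B$, both $K_M(z,\cdot)$ and $R^m_{n+N+k,z}K_M(z,\cdot)$ are bounded on $\overline\B$, and $f_r\to f$ in $L^1(d\nu_N)$ (respectively in $L^1(d\sigma)$ through boundary values) by the standard Hardy-Sobolev and Hardy space theory, so dominated convergence applies on each side. The main obstacle is precisely this approximation step---verifying the relevant continuity of $\PN^{N,M}$, $\PN^{N+k+m,M}$ and $R^k_{n+N}$ in the chosen topology---while the algebraic content reduces to the single symmetry $\overline R_w K_M=R_z K_M$ combined with the Leibnitz rule and formula \eqref{eqn:intpartsphi}.
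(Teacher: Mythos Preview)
Your proposal is correct and follows essentially the same approach as the paper: apply \eqref{eqn:intpartsphi} to $\varphi_z(w)=f(w)(1-z\overline w)^{-(n+M)}$ and use the kernel symmetry $\overline{R^m_{n+N+k,w}}(1-z\overline w)^{-(n+M)}=R^m_{n+N+k,z}(1-z\overline w)^{-(n+M)}$. The paper's proof is a two-line sketch that omits the smoothness and approximation issues you spell out, so your version is in fact more complete.
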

\begin{proof}
Applying \eqref{eqn:intpartsphi} to the function $\varphi_z(w)=\dfrac{f(w)}{(1-z\overline w)^{n+M}}$, $z\in\B$, and using the fact that $\overline{R^m_{n+N+k,w}}(1-z\overline w)^{-M-n}=R^m_{n+N+k,z}(1-z\overline w)^{-M-n}$ we obtain the result.
\end{proof}

\begin{cor}\label{cor:RkvPN}
Let  $N,M>0$ and  $h\in B^\infty_{-N}$.
Then for any positive integer $m$, there exist constants $a_j$, $j=1,\dots,m+1$, such that 
\begin{equation}\label{eqn:RkvPN1}
h= \sum_{j=1}^m a_j\PN^{N+j,N}(R^1_M h)+a_{m+1}\PN^{N+m,N}(h).
\end{equation}
\end{cor}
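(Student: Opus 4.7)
The plan is to iterate Corollary \ref{cor:intpartsP}, starting from the reproducing identity $h=\PN^{N}(h)=\PN^{N,N}(h)$ furnished by Proposition \ref{prop:repPNN}. Applying Corollary \ref{cor:intpartsP} with parameters $(k,m)=(1,0)$ and base weight $N+j$ in place of $N$ yields the one-step identity
\begin{equation*}
\PN^{N+j,N}(h)(z)=\PN^{N+j+1,N}\bigl(R^1_{n+N+j}h\bigr)(z),\qquad j=0,1,2,\ldots,
\end{equation*}
which shifts the weight parameter of $\PN^{\cdot,N}$ up by one at the cost of inserting a first-order differential operator inside the integral.

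The algebraic heart of the argument is the observation that $R^1_{n+N+j}$ and $R^1_M$ are both first-order operators of the form $I+cR$, so the former is a scalar linear combination of the identity and the latter. Explicitly, setting $\beta_j:=M/(n+N+j)$ and $\alpha_j:=1-\beta_j$, one has $R^1_{n+N+j}h=\alpha_j h+\beta_j R^1_M h$; substituting this back into the one-step identity produces the recursion
\begin{equation*}
\PN^{N+j,N}(h)=\alpha_j\,\PN^{N+j+1,N}(h)+\beta_j\,\PN^{N+j+1,N}(R^1_M h).
\end{equation*}
Iterating this recursion $m$ times in $j$, starting from $h=\PN^{N,N}(h)$, produces the expansion \eqref{eqn:RkvPN1}, with coefficients readable off as $a_1=\beta_0$, $a_{j+1}=\beta_j\prod_{i=0}^{j-1}\alpha_i$ for $1\leq j<m$, and $a_{m+1}=\prod_{i=0}^{m-1}\alpha_i$. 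This combinatorial step is routine.

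The main technical obstacle is that the hypothesis $h\in B^\infty_{-N}$ does not in general force $h\in B^1_{-N}$ (nor $R^1_M h\in B^1_{-(N+1)}$), so neither Proposition \ref{prop:repPNN} nor Corollary \ref{cor:intpartsP} applies verbatim; in particular, the $j=1$ term $\PN^{N+1,N}(R^1_M h)$ need not be an absolutely convergent Lebesgue integral. I would handle this by approximation: carry out the derivation for the dilates $h_r(w):=h(rw)$ with $r<1$, which are holomorphic on $\overline\B$ and hence trivially satisfy every hypothesis, and then pass to the limit $r\to 1^-$. The terms with $j\geq 2$ and the remainder $\PN^{N+m,N}(h)$ admit an $r$-independent integrable dominant of the form $C(1-|w|^2)^{j-2}|1-z\overline w|^{-n-N}$ or $C(1-|w|^2)^{m-1}|1-z\overline w|^{-n-N}$, so they pass to the limit by dominated convergence; the borderline term is then defined as the limit of its dilated counterparts, existence being forced by convergence of all other terms and of the left-hand side $h_r\to h$.
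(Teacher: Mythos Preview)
Your argument is essentially the same as the paper's: both obtain the one-step identity $\PN^{N+j,N}(h)=\PN^{N+j+1,N}(R^1_{n+N+j}h)$ from Corollary~\ref{cor:intpartsP}, rewrite $R^1_{n+N+j}$ as the affine combination $\alpha_j I+\beta_j R^1_M$, and then iterate (the paper phrases this as induction on $m$, you as unrolling the recursion; the explicit constants you write down are exactly the ones the paper's recursion produces).

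The one difference is that you flag, and then address via dilation $h_r(w)=h(rw)$ and passage to the limit, the fact that the hypothesis $h\in B^\infty_{-N}$ does not by itself place $h$ in $B^1_{-N}$, so that Proposition~\ref{prop:repPNN} and Corollary~\ref{cor:intpartsP} are not literally applicable at the first step $j=0$, and the $j=1$ integral $\PN^{N+1,N}(R^1_M h)$ need not converge absolutely. The paper's proof does not comment on this point at all; it applies the reproducing formula and the integration-by-parts identity formally. Your approximation argument is a standard and correct way to close this gap, so in this respect your write-up is more careful than the paper's own.
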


\begin{proof}
By Corollary \ref{cor:intpartsP}, we have
\begin{align*}
&\PN^{N+j,N}(h)=\PN^{N+j+1,N}(R^1_{n+N+j}h)\\
&=\frac{M}{n+N+j}\PN^{N+j+1,N}\left(\left(1+\frac{R}{M}\right)h\right)+\frac{n+N+j-M}{n+N+j}\PN^{N+j+1,N}(h).
\end{align*}

Thus, if $j=0$, then $\PN^{N,N}(h)=h$, and we obtain the case $m=1$.

Assume that the result is valid for $m-1$. The above formula with $j=m-1$ gives 
$$
\PN^{N+m-1,N}(h)
=b_1\PN^{N+m,N}(R^1_Mh)+b_2\PN^{N+m,N}(h).
$$
By induction, this permits to complete the proof.
\end{proof}

\begin{prop}\label{prop:PNMonF}
Let $1<p,q<\infty$. Assume that either $s>-N$ or $s=N=0$ and $q\le 2$. 
Then, the operator $\PN^{N,M}$ is a bounded operator from 
$F^{p,q}_s$ to $F^{p,q}_{s+N-M}$.
\end{prop}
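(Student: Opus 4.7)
\emph{Strategy.} The plan is to reduce the boundedness to the scalar integral Lemma \ref{lem:PPF} by differentiating $\PN^{N,M}(f)$ up to the order needed for its $F^{p,q}_{s+N-M}$-norm, and then integrating by parts to transfer those derivatives onto $f$. Fix an integer $k>\max(s,\,s+N-M)$. Since $R^k_{n+M}$ is a bijection $F^{p,q}_{s+N-M}\to F^{p,q}_{s+N-M-k}$ by Lemma \ref{lem:Rkt}, and $R^k_{n+M,z}(1-z\bar w)^{-(n+M)}=(1-z\bar w)^{-(n+M+k)}$, differentiation under the integral yields $R^k_{n+M}\PN^{N,M}(f)(z)=\PN^{N,M+k}(f)(z)$. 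Hence it suffices to prove
\[
\|(1-|z|^2)^{k-s-N+M}\,\PN^{N,M+k}(f)(z)\|_{T^{p,q}}\lesssim \|f\|_{F^{p,q}_s}.
\]

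\emph{Main case $s>-N$.} Apply the integration by parts formula of Proposition \ref{prop:intpartsphi} (its boundary version when $N=0$) to $\varphi(w)=f(w)(1-z\bar w)^{-(n+M+k)}$. Because the holomorphic radial derivative $R_w$ kills the antiholomorphic kernel, all $k$ derivatives land on $f$, giving, up to a positive constant,
\[
\PN^{N,M+k}(f)(z)\;\approx\;\PN^{N+k,\,M+k}\bigl(R^k_{n+N}f\bigr)(z)
\]
(with $N+k$ replaced by $k$ when $N=0$). Passing to absolute values bounds the left-hand side by $\PP^{N+k,M+k}(|R^k_{n+N}f|)(z)$. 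By the definition of $F^{p,q}_s$ and the choice $k>s$, the function $h(w):=(1-|w|^2)^{k-s}|R^k_{n+N}f(w)|$ lies in $T^{p,q}$ with norm $\approx\|f\|_{F^{p,q}_s}$. Substituting $|R^k_{n+N}f(w)|=(1-|w|^2)^{s-k}h(w)$, the kernel weight $(1-|w|^2)^{N+k-1}$ inside $\PP^{N+k,M+k}$ collapses to $(1-|w|^2)^{N+s-1}$, so up to constants
\[
\PP^{N+k,M+k}(|R^k_{n+N}f|)(z)\;\approx\;\PP^{N+s,\,M+k}(h)(z).
\]
Now invoke Lemma \ref{lem:PPF} with parameters $(N',L,M')=(N+s,\,k-s-N+M,\,M+k)$: the hypothesis $s>-N$ gives $N'>0$, the choice of $k$ gives $L>0$, and $N'+L=(N+s)+(k-s-N+M)=k+M=M'$, so the required inequality holds with equality. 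This closes the main case.

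\emph{The borderline case $s=N=0$, $q\le 2$.} This is the main obstacle, because the reparametrization above would force $N'=N+s=0$, which is excluded in Lemma \ref{lem:PPF}. The plan is to exploit the embedding $F^{p,q}_0\subset F^{p,2}_0=H^p$ from Proposition \ref{prop:embedF}, which is valid precisely when $q\le 2$. This guarantees that $f$ has boundary values $f^{*}\in L^p(\bB)$ and that $\PN^{0,M}$ acts on $f^{*}$ as the classical Cauchy-type operator $\mathcal{C}_{-M}$; the bound $H^p\to H^p_{-M}=F^{p,2}_{-M}$ is then standard, settling $q=2$. For $q<2$ one would upgrade from the $F^{p,2}_{-M}$-target to the sharper $F^{p,q}_{-M}$-target either by a Littlewood--Paley / atomic decomposition of $F^{p,q}_0$ or by real interpolation between the endpoints $q=1$ and $q=2$, combined with the representation $\PN^{0,M}(f)=\PN^{k,M}(R^k_n f)$ (obtained from the $N=0$ integration by parts) that shifts the problem to the already-settled range $N>0$. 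This delicate endpoint is exactly what the auxiliary hypothesis $q\le 2$ is introduced to accommodate.
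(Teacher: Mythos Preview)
Your main case $s>-N$ is correct and is essentially the paper's argument. The only structural difference is that the paper carries an extra integration-by-parts parameter $m$ in addition to your $k$: from Corollary~\ref{cor:intpartsP} one has
\[
\PN^{N,M}(f)=R^{m}_{n+N+k}\bigl[\PN^{N+k+m,M}(R^{k}_{n+N}f)\bigr],
\]
so that after applying $(1+R)^{l}$ one bounds by $\PP^{N+k+m,\,M+l+m}(|R^{k}_{n+N}f|)$. After absorbing $(1-|w|^{2})^{k-s}$ into the weight, the effective first index becomes $N'=N+s+m$, and with $m\ge 1$ this is strictly positive even when $N=s=0$. Thus the single estimate via Lemma~\ref{lem:PPF} covers both regimes at once.

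This is exactly the gap in your borderline case. Your proposed shift $\PN^{0,M}(f)=\PN^{k,M}(R^{k}_{n}f)$ lands the new data $R^{k}_{n}f$ in $F^{p,q}_{-k}$ with new first index $k$, i.e.\ at the boundary $s'=-N'$ rather than strictly inside $s'>-N'$, so your ``already-settled'' case does not apply to it. The interpolation/Littlewood--Paley alternatives you sketch are neither carried out nor obviously routine. The clean fix is the one above: introduce the second parameter $m$ (one extra integration by parts transferring a derivative to the $z$-variable rather than onto $f$), which raises $N'$ by $m$ without touching $s$. Then the hypothesis $q\le 2$ is used only once, to ensure $F^{p,q}_{0}\subset H^{p}\subset H^{1}$ so that $\PN^{0,M}(f)$ is well defined; it plays no further role in the estimate.
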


\begin{proof}
The condition $s>-N$ ensures that $F^{p,q}_s\subset B^1_{-N}$, and the condition $s= N=0$ and $q\le 2$
 ensures that $F^{p,q}_s\subset H^p\subset H^1$. Therefore, $\PN^{N,M}$ is well defined on $F^{p,q}_s$.

As a consequence of \eqref{eqn:intpartsP0} in Corollary \ref{cor:intpartsP}, 
for positive integers $l,k,m$ satisfying $l>s+N-M$ and $k>s$, we have
\begin{equation}\label{eqn:intpartsP}
|(1+R)^l\PN^{N,M}(f)(z)|\lesssim \PP^{N+k+m,M+l+m} (|R^k_{n+N}f|)(z).
\end{equation}
Therefore, 
\begin{align*}
\Psi(z):&=(1-|z|^2)^{l-s-N+M}|(1+R)^l\PN^{N,M}(f)(z)|\\
&\lesssim (1-|z|^2)^{l-s-N+M}\PP^{N+s+m,M+l+m} ((1-|w|^2)^{k-s}|R^k_{n+N}f(w)|)(z).
\end{align*}

Thus, by Lemma \ref{lem:PPF} we have 
$$
\|\PN^{N,M}(f)\|_{F^{p,q}_{s+N-M}}\approx \|\Psi\|_{T^{p,q}}
\lesssim  \|(1-|z|^2)^{k-s}R^k_{n+N}f(z)\|_{T^{p,q}}\approx \|f\|_{F^{p,q}_s},
$$
which ends the proof.
\end{proof}

Analogously, as a consequence of \eqref{eqn:intpartsP}, Fubini's theorem and Lemma \ref{lem:estP}, we obtain the following result.

\begin{prop}\label{prop:PNMonB}
The operator $\PN^{N,M}$ is  bounded  from 
$B^1_{-N}$ to $B^1_{-M}$ and from $B^\infty_{t}$ to $B^\infty_{t+N-M}$ for $t>-N$.
\end{prop}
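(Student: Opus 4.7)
The plan is to mimic the strategy of Proposition \ref{prop:PNMonF}, replacing the tent-space estimate of Lemma \ref{lem:PPF} by the pointwise/scalar estimates supplied by Lemma \ref{lem:estP} together with Fubini's theorem. The essential input is already available in the paper, namely the pointwise bound
$$
|(1+R)^l\PN^{N,M}(f)(z)|\lesssim \PP^{N+k+m,M+l+m}(|R^k_{n+N}f|)(z),
$$
obtained in \eqref{eqn:intpartsP} as a consequence of the integration-by-parts identity \eqref{eqn:intpartsP0}. I will be free to take the integers $l,k,m$ as large as required so that all the parameter inequalities below are met.

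For the $B^\infty_t\to B^\infty_{t+N-M}$ assertion (with $t>-N$), I would fix integers $l>t+N-M$, $k>t$, and pick any $m\geq 1$. Since $|R^k_{n+N}f(w)|\lesssim \|f\|_{B^\infty_t}(1-|w|^2)^{t-k}$, the above display produces
$$
(1-|z|^2)^{l-t-N+M}|(1+R)^l\PN^{N,M}(f)(z)|\lesssim \|f\|_{B^\infty_t}(1-|z|^2)^{l-t-N+M}\int_\B\frac{(1-|w|^2)^{t+N+m-1}}{|1-z\overline w|^{n+M+l+m}}d\nu(w).
$$
The hypothesis $t>-N$ (so that $t+N+m>0$) together with $M+l+m>t+N+m$ puts us in the first case of Lemma \ref{lem:estP} (with $u=0$, equivalently Proposition 1.4.10 of \cite{rudin}), giving the bound $(1-|z|^2)^{t+N-M-l}$. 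This cancels the outer weight and yields the required uniform control.

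For the $B^1_{-N}\to B^1_{-M}$ assertion, I would integrate \eqref{eqn:intpartsP} against $(1-|z|^2)^{l+M-1}d\nu(z)$ for a fixed $l$ with $l+M>0$ and any $k,m\geq 1$, and apply Fubini's theorem, which reduces the matter to the inner integral
$$
\int_\B\frac{(1-|z|^2)^{l+M-1}}{|1-z\overline w|^{n+M+l+m}}d\nu(z)\lesssim (1-|w|^2)^{-m},
$$
valid again by the standard Forelli--Rudin estimates. The leftover factor $(1-|w|^2)^{N+k+m-1-m}=(1-|w|^2)^{N+k-1}$ combines with $|R^k_{n+N}f(w)|$ to reproduce $\|f\|_{B^1_{-N}}$ in the norm characterisation provided by Lemma \ref{lem:Rkt}.

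There is no real obstacle here: the only thing to be careful about is the bookkeeping of parameters so that the chosen $k,l,m$ simultaneously satisfy $l>t+N-M$ (resp.\ $l+M>0$), $k>t$ (resp.\ $k\geq 1$), and that the integrability thresholds of Lemma \ref{lem:estP} are met. The hypothesis $t>-N$ in the $B^\infty$ case is precisely what guarantees $t+N+m>0$ for any $m\geq 1$, and in the $B^1$ case one uses only $N\geq 0$ and the freedom to take $l,m$ large, so the argument is uniform.
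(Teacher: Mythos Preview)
Your proposal is correct and follows exactly the approach the paper indicates: the paper's own proof is the single sentence ``Analogously, as a consequence of \eqref{eqn:intpartsP}, Fubini's theorem and Lemma \ref{lem:estP}, we obtain the following result,'' and you have simply filled in those details, with the parameter bookkeeping done correctly. The only minor remark is that the equivalence $\int_\B |R^k_{n+N}f(w)|(1-|w|^2)^{N+k-1}d\nu(w)\approx\|f\|_{B^1_{-N}}$ is not literally the content of Lemma \ref{lem:Rkt}, but it is the standard fact (used throughout the paper) that $R^k_{n+N}$ and $(1+R)^k$ give equivalent norms on these spaces.
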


\subsection{The inverse of the operator $\PN^{N,M}$} \label{subsec:invPNM}\quad\par

Since $\overline{\PN^M}$ is a reproducing kernel 
for smooth  anti-holomorphic functions on $\B$, for $z,w\in \B$ we have
\begin{equation} \label{eqn:intPNM}\begin{split}
\int_\B \PN^{M,N}(z,u)\PN^{N,M}(u,w)d\nu(u)&=\PN^N(z,w),\quad N\ge 0, M>0
\end{split}\end{equation}

Assume first that both $N, M>0$. If $f\in B^1_{-t}$, $t>-N$, then Fubini's theorem gives
$\PN^{M,N}[(P^{N,M}(f)]=f$.  

Therefore, 
$\PN^{N,M}$ is a bijective operator from $B^1_{-t}$ to $B^1_{-t+N-M}$ and its inverse is $\PN^{M,N}$.
This result together Proposition \ref{prop:PNMonF} proves that if $s>-N$, then $\PN^{N,M}$ 
is also bijective from $F^{p,q}_s$ to $F^{p,q}_{s+N-M}$.

Next, we consider the general case where $N\ge 0$ and $M>-n$. By \eqref{eqn:intpartsP0} with $k=0$ and $m>\max\{0,-M\}$,  
 we have 
\begin{align*}
\PN^{N,M}(f)(z)&=R^{m}_{n+N}\left[\PN^{N+m,M} (f)\right](z)\\
&=R^{m}_{n+N}(R^{m}_{n+M})^{-1} \left[\PN^{N+m,M+m} (f)\right](z).
\end{align*}
Since for $t>-N-m$, $\PN^{N+m,M+m}$ is a bijective operator from $B^1_t$ to $B^1_{t+N-M}$ and 
$R^{m}_{n+N}(R^{m}_{n+M})^{-1}$ is a bijective operator on $B^1_{t+N-M}$, then $\PN^{N,M}$ 
is a bijective operator from $B^1_{-N}$ to $B^1_{-M}$, and $\PN^{0,M}$ is a bijective operator 
from $H^p$ to $F^{p,2}_{-M}$, $1<p<\infty$.

Clearly from the above formula we can describe the inverse of $\PN^{N,M}$. However, for our purposes we will give another expression of this inverse operator.

From \eqref{eqn:intPNM} and Proposition \ref{prop:intpartsphi}, for any $l>\max\{0,-M\}$ we have
\begin{align*}
f(z)&=\PN^{M+l,N}\left[\PN^{N,M+l}(f)\right](z)
=\int_{\B}\frac{R^l_{n+M}\PN^{N,M}(f)(w)}{(1-z\overline w)^{n+N}}d\nu_{M+l}(w)\\
 &=\int_{\B}\PN^{N,M}(f)(w)\overline{R^l_{n+M,w}}(1-z\overline w)^{-n-N}d\nu_{M+l}(w)\\
 &=R^{l}_{n+M}\left[\PN^{M+l,N}\left(\PN^{N,M}(f)\right)\right](z).
\end{align*}

Combining the above results with the ones of Propositions \ref{prop:PNMonF} and  \ref{prop:PNMonB}, 
we obtain the inverse of $\PN^{N,M}$ as a linear combination of operator of the same type. 
All this is summarized in the following proposition. 

\begin{prop} \label{prop:PNMbij}
Let $1< p,q<\infty$. Then we have:
\begin{enumerate}
		\item If $N>0$ and $M>-n$, then the operator $\PN^{N,M}$ is a bijective operator from $B^1_{-N}$ to $B^1_{-M}$. Its inverse is given by an integral operator 
	$$
	\T^{N,M}(f)(z):=\int_{\B} \T^{N,M}(z,w)f(w)d\nu(w),
	$$
	where for a positive integer $l>\max\{0,-M\}$, 
	\begin{equation}\label{eqn:expT}
	 \T^{N,M}(z,w):=R^{l}_{n+M}\PN^{M+l,N}(z,w)=\sum_{j=0}^{l} a_j \PN^{M+l,N+j}(z,w).
	\end{equation}
	
	\item If $N\ge 0$ and either $s>-N$ or $N=s=0$ and $q\le 2$, then $\PN^{N,M}$ is a bijective operator from $F^{p,q}_{s}$ to $F^{p,q}_{s+N-M}$.
	
	\item If $N\ge 0$ and $t>-N$, then $\PN^{N,M}$ is a bijective operator from $B^\infty_t$ to $B^\infty_{t+N-M}$.
\end{enumerate}
\end{prop}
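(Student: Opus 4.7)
My plan is to prove (i) first, extract the explicit form \eqref{eqn:expT} of the inverse from that computation, and then deduce (ii) and (iii) by combining that formula with the boundedness already established in Propositions \ref{prop:PNMonF} and \ref{prop:PNMonB}. The engine throughout is the reproducing identity \eqref{eqn:intPNM} together with Proposition \ref{prop:repPNN}.

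For (i) when both $N,M>0$, I would start from $f\in B^1_{-N}$, note $\PN^{N,M}(f)\in B^1_{-M}$ by Proposition \ref{prop:PNMonB}, and compute $\PN^{M,N}[\PN^{N,M}(f)](z)$ by Fubini, with absolute integrability guaranteed by Lemma \ref{lem:estP}; the inner integral collapses via \eqref{eqn:intPNM} to $\PN^N(z,w)$, and Proposition \ref{prop:repPNN} then recovers $f(z)$. The symmetric computation handles the reverse composition. To extend (i) to $-n<M\le 0$, pick an integer $m$ with $M+m>0$ and factor $\PN^{N,M} = R^m_{n+N}\circ(R^m_{n+M})^{-1}\circ\PN^{N+m,M+m}$ using Corollary \ref{cor:intpartsP}; each factor is bijective (Lemma \ref{lem:Rkt}(iv) for the two $R^m$'s, the case above for $\PN^{N+m,M+m}$), so their composition is too. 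To derive \eqref{eqn:expT}, fix $l>\max\{0,-M\}$ and start from the reproducing identity $f(z) = \PN^{M+l,N}[\PN^{N,M+l}(f)](z)$; integration by parts $l$ times via Proposition \ref{prop:intpartsphi} transfers $R^l_{n+M}$ from $\PN^{N,M+l}(f)$ onto the kernel $(1-z\bar w)^{-n-N}$, and the identity $\overline{R^l_{n+M,w}}(1-z\bar w)^{-n-N} = R^l_{n+M,z}(1-z\bar w)^{-n-N}$ lets that derivative pull outside the integral, yielding $f = R^l_{n+M}\bigl[\PN^{M+l,N}\PN^{N,M}(f)\bigr]$. Expanding $R^l_{n+M}$ as a polynomial in $R$ and applying Lemma \ref{lem:Rkt}(ii) to each term gives the finite-sum expression in \eqref{eqn:expT}.

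For (ii) and (iii), the forward boundedness of $\PN^{N,M}$ is given by Propositions \ref{prop:PNMonF} and \ref{prop:PNMonB}. I would then fix $l\ge 1$ large enough and read \eqref{eqn:expT} as the bounded composition $\T^{N,M} = R^l_{n+M}\circ\PN^{M+l,N}$: under the stated hypotheses on $s$ and $t$, the second factor maps $F^{p,q}_{s+N-M}\to F^{p,q}_{s+l}$ and $B^\infty_{t+N-M}\to B^\infty_{t+l}$ boundedly by the same two Propositions, while $R^l_{n+M}$ is bijective from $F^{p,q}_{s+l}$ to $F^{p,q}_s$ and from $B^\infty_{t+l}$ to $B^\infty_t$ (Lemma \ref{lem:Rkt}(iv) and its $B^\infty$ analogue, which follows from expressing $R^l_{n+M}$ as a polynomial in $R$). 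The two compositions $\T^{N,M}\PN^{N,M}$ and $\PN^{N,M}\T^{N,M}$ then continue to equal the identity by the same Fubini computation as in (i), now valid on the larger spaces via the embeddings $F^{p,q}_s\subset B^1_{-N}$ and $B^\infty_t\subset B^1_{-N}$ provided by Proposition \ref{prop:embedF}\eqref{item:embedF5} when $s,t>-N$. The main obstacle I anticipate is the endpoint case $N=s=0$ with $q\le 2$ in (ii), where $\PN^{0,M}$ is a boundary integral rather than a ball integral: here one must replace $B^1_{-N}$ by $H^1$ as the ambient space for the reproducing formula, use the embedding $F^{p,q}_0\subset H^p\subset H^1$ (valid because $q\le 2$ and $p>1$), and invoke the $N=0$ case of Proposition \ref{prop:intpartsphi} (which rests on Stokes' theorem) for the integration-by-parts step.
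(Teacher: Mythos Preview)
Your proposal is correct and follows essentially the same approach as the paper: the reproducing identity \eqref{eqn:intPNM} plus Fubini for the basic case $N,M>0$, the factorization $\PN^{N,M}=R^m_{n+N}(R^m_{n+M})^{-1}\PN^{N+m,M+m}$ via Corollary \ref{cor:intpartsP} to reach general $M>-n$, the integration-by-parts derivation of \eqref{eqn:expT}, and the transfer to $F^{p,q}_s$ and $B^\infty_t$ via Propositions \ref{prop:PNMonF} and \ref{prop:PNMonB}. Your treatment of the endpoint $N=s=0$, $q\le 2$ through the embedding $F^{p,q}_0\subset H^p\subset H^1$ is also what the paper implicitly relies on.
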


\section{Pointwise multiplier norm-estimates of $\PN^{N,M}º$ }\label{sec:pointwise}

In Proposition \ref{prop:PNMbij}, we proved the bijectivity of the operator $\PN^{N,M}$  from $F^{p,q}_s$ to $F^{p,q}_{s+N-M}$. 
In this section we prove that, under some natural assumptions on $N,M,s$ and $s'$,  this operator restricted to $Mult(F^{p,q}_s\to F^{p,q}_{s'})$ is also  bijective  from this space to $Mult(F^{p,q}_s\to F^{p,q}_{s'+N-M})$.

\subsection{The operator $\PN^{N,M}$ on spaces of pointwise multipliers}\qquad\par

We begin studying the boundedness of the operator $\PN^{N,M}$ 
on some spaces of pointwise multipliers. The following formula will be the essential tool in order to prove this result. 
It involves constants $k,J,L$ and $m$ satisfying some conditions, 
which always exist because it is enough to choose them big enough.

\begin{thm}\label{thm:mf}
Let $N\ge 0$, $0<t<n+N$ and $M>-n$.  If $N>0$, let $g\in B^1_{-N}\cap B^\infty_{-t}$,  and if $N=0$, let $g\in H^1(\B)\cap B^\infty_{-t}$. 
Let also  $k,J,L$ and $m$ be positive integers satisfying: 
$$
k>M+n-1,\quad J>k+n+N,\quad  L>N+J,\quad \text{and} \quad m>N-t+n+J.
$$ 
Then,  there exist constants $a_i$ and $a_{i,j}$, 
such that  for any  $f\in B^1_{t+k-N-J}$,
\begin{equation}\label{eqn:mf}\begin{split}
f\PN^{N,M}(g)&= \sum_{0\le i \le k+1-n-M} a_{i} f\PN^{N+J,M+i}(g)\\
&+\sum_{k+1-n-M<i\le J}\sum_{j=0}^k a_{i,j}  \PN^{N+J,M+i-j}\left(g\,d^j\,f(R,\overset{(j)}{\cdots},R)\right)\\
&+ Q^{N,M,k}(f,g).
\end{split}\end{equation}
The function $ Q^{N,M,k}(f,g) $ is a holomorphic function on $\B$  satisfying 
\begin{align*}
&|(1+R)^m  Q^{N,M,k}(f,g)(z)|\\ 
&\lesssim 
\|g\|_{B^\infty_{-t}}\left(\int_\B |R^l_{n+L} f(u)|
\frac{(1-|u|^2)^{N-t-k+J+l-1}}{|1-z\overline u|^{n+M-k+J+m}} d\nu(u)\right.\\
&\left.+(1-|z|^2)^{k+1-m}\Omega_{N-t-M}(1-|z|^2)
\int_\B |R^l_{n+L} f(u)|\frac{(1-|u|^2)^{L+l-1}}{|1-z\overline u|^{n+k+L+1}}d\nu(u)\right),
\end{align*}
where $l$ is nonnegative integer and  the funtion $\Omega_r(x)$ is defined on $0< x\le 1$ and $r\in\R$ by:

$\Omega_r(x)=1+x^{r}$ if $r\ne 0$, and $\Omega_r(x)=\log(2/x)$ if $r=0$.
\end{thm}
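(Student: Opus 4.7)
The proof is an integration-by-parts combined with a careful Taylor-type expansion of $f$ against the Bergman kernel. First, since $g$ is holomorphic, I would apply Corollary~\ref{cor:intpartsP} with $k=0, m=J$ to shift weight onto the factor $(1-|w|^2)$, obtaining $\PN^{N,M}(g)(z) = R^J_{n+N,z}[\PN^{N+J,M}(g)](z)$. Expanding $R^J_{n+N}$ as a polynomial in $R$ and using Lemma~\ref{lem:Rkt}(2), which says $R^j_{n+M,z}(1-z\bar w)^{-(n+M)} = (1-z\bar w)^{-(n+M+j)}$, yields
$$\PN^{N,M}(g)(z) = \sum_{i=0}^{J}\gamma_i\,\PN^{N+J,M+i}(g)(z).$$
Multiplying by $f(z)$ and keeping the terms with $0\le i \le k+1-n-M$ outside the integral produces the first sum in \eqref{eqn:mf}.

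For the harder range $i > k+1-n-M$ the kernel $(1-z\bar w)^{-(n+M+i)}$ is too singular to leave $f(z)$ outside the integral, so $f(z)$ must be absorbed into the integrand. I would Taylor expand $f(z)$ to order $k$ in a way adapted to the kernel, iteratively using the key algebraic identity
$$\bar w\cdot(z-w) = (1-|w|^2)-(1-z\bar w),$$
which converts the ordinary Taylor polynomial $\sum_{|\alpha|\le k}\partial^\alpha f(w)(z-w)^\alpha/\alpha!$ into an expansion of the form
$$f(z) = \sum_{j=0}^{k}(1-z\bar w)^j\,\Phi_j(w,f)+\rho_k(z,w,f),$$
where each coefficient $\Phi_j(w,f)$ is a polynomial combination (in $w$) of $R^\ell f(w)$ for $\ell\le j$. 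Substituting into the integrand, the $j$-th term cancels $j$ powers of the kernel and reassembles into a term $a_{i,j}\,\PN^{N+J,M+i-j}(g\cdot d^jf(R,\dots,R))$; this gives the second double sum of \eqref{eqn:mf}.

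The residual part $\rho_k(z,w,f)$ produces the remainder $Q^{N,M,k}(f,g)$. Writing $\rho_k$ via its standard integral representation in $d^{k+1}f$ and invoking Corollary~\ref{cor:intpartsP} once more (with parameter $l$) to re-express $d^{k+1}f$ in terms of $R^l_{n+L}f$ against $d\nu_{N+J+k+l}$, I then apply $(1+R)^m$ in $z$ and decompose the result into two pieces. The first is the ``bulk'' contribution, which directly gives the first displayed integrand $(1-|u|^2)^{N-t-k+J+l-1}|1-z\bar u|^{-(n+M-k+J+m)}$. The second is a ``correction'' arising from the trailing factor $(1-z\bar w)^{k+1}$ in $\rho_k$ and is estimated via Lemma~\ref{lem:estP}, which produces the displayed kernel $|1-z\bar u|^{-(n+k+L+1)}$ and the factor $\Omega_{N-t-M}(1-|z|^2)$; the logarithmic case of $\Omega$ corresponds exactly to the borderline case $N=n+M+L$ of that lemma.

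The main technical obstacle lies in the Taylor step: arranging the expansion in \emph{exactly} the form $\sum_j (1-z\bar w)^j\Phi_j(w,f)$ with $\Phi_j$ expressible through the specific combinations $d^jf(R,\dots,R)$ required by the statement. The numerical constraints $k>M+n-1$, $J>k+n+N$, $L>N+J$, $m>N-t+n+J$ enter precisely here: they guarantee the absolute convergence of each rewritten integral after differentiation and ensure that the remainder is well-defined as a holomorphic function on $\B$. Once the algebraic identity is systematically iterated and the polynomial coefficients in $w$ are carefully tracked, the estimation of $(1+R)^mQ^{N,M,k}(f,g)$ reduces to a direct application of Lemma~\ref{lem:estP} together with the bound $|g(w)|\lesssim \|g\|_{B^\infty_{-t}}(1-|w|^2)^{-t}$.
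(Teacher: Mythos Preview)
Your opening step is right and matches the paper: expanding $\PN^{N,M}(g)=\sum_{i=0}^J a_i\,\PN^{N+J,M+i}(g)$ via Corollary~\ref{cor:intpartsP} and Lemma~\ref{lem:Rkt}. The decisive step, however, is not the algebraic identity $\bar w\cdot(z-w)=(1-|w|^2)-(1-z\bar w)$, and this is where your argument breaks down. That identity only controls the scalar $\bar w\cdot(z-w)$; it cannot turn a general Taylor monomial $(z-w)^\alpha$ into a polynomial in $(1-z\bar w)$ with holomorphic coefficients in $w$. Even if you force it through, the $(1-|w|^2)$ factors it produces change the weight exponent, so the resulting terms are of the form $\PN^{N+J+a,\,\cdot}$ rather than $\PN^{N+J,\,M+i-j}$, and you do not recover the combinations $d^jf(R,\dots,R)(w)=\sum_{|\alpha|=j}\frac{j!}{\alpha!}w^\alpha\partial^\alpha f(w)$ that the statement requires.

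What the paper actually does for each $i>k+1-n-M$ is: (a) write $f(z)$ via the reproducing formula $f=\PN^{L+l,L}(R^l_{n+L}f)$ and expand the kernel $(1-z\bar u)^{-(n+L)}$ around $w$ (Lemmas~\ref{lem:TaylorKernel}--\ref{lem:Taylorf}), obtaining the classical Taylor polynomial plus an \emph{explicit} integral remainder $E^k_{L,l}(f)(z,w)$ carrying the auxiliary variable $u$; and (b) eliminate each factor $(z-w)^\alpha$, in both the polynomial part and the remainder, by the integration-by-parts identity of Proposition~\ref{prop:intparts}/Corollary~\ref{cor:intparts},
\[
\int_\B g(w)\frac{(1-|w|^2)^{\tilde N-1}(z-w)^\alpha}{(1-z\bar w)^{n+\tilde M}}\,d\nu(w)
=c_{\tilde N,\tilde M,|\alpha|}\int_\B g(w)\frac{(1-|w|^2)^{\tilde N-1}\,w^\alpha}{(1-z\bar w)^{n+\tilde M-|\alpha|}}\,d\nu(w),
\]
which uses the operator $\bar\partial_j-w_j\bar R$ and the vanishing of $(1-|w|^2)^{\tilde N-1}$ on $\bB$ (here $\tilde N=N+J>1$ is essential). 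This is what simultaneously produces the factor $w^\alpha$ (hence $d^jf(R,\dots,R)$), lowers the kernel exponent by exactly $j$, and---applied to the remainder---yields the explicit kernel $K^{\tilde N,\tilde M,k}_L(z,w,u)$ whose $w$-integral is then estimated by Lemma~\ref{lem:estP} in Proposition~\ref{prop:estQ}, giving the two displayed integrals and the $\Omega_{N-t-M}$ factor. Your sketch has the right overall shape, but the mechanism you name for the central step does not do the job; the missing ingredient is precisely this weighted integration-by-parts that trades $(z-w)^\alpha$ for $w^\alpha$.
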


Since this theorem is a technical tool, we had rather postponed its proof to the end of the paper.

\begin{prop}\label{prop:PNMmaps}
Let $1< p,q<\infty$, $0\le s\le n/p$  
and $s'<  s$, satisfying either 
$-N<s'$ or $N=s'=0$ and $q\le 2$.

\begin{enumerate}
	\item \label{item:PNMmaps1}  If $s>s'+N-M$, then $\PN^{N,M}$ is a bounded linear operator  from 
$Mult(F^{p,q}_s\to F^{p,q}_{s'})$ to $Mult(F^{p,q}_s\to F^{p,q}_{s'+N-M})$.
 
	\item \label{item:PNMmaps2} If $s=s'+N-M$, then for any $s''<s$ the operator   
$\PN^{N,M}$ is bounded from $Mult(F^{p,q}_s\to F^{p,q}_{s'})$ to $Mult(F^{p,q}_s\to F^{p,q}_{s''})$.

\item \label{item:PNMmaps3} If $s<s'+N-M$, then  $\PN^{N,M}$ is bounded from 
$Mult(F^{p,q}_s\to F^{p,q}_{s'})$ to  $Mult(F^{p,q}_s)$.
\end{enumerate}
\end{prop}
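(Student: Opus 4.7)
The strategy is to apply the generalized Leibnitz-type identity \eqref{eqn:mf} of Theorem \ref{thm:mf} to the product $f\,\PN^{N,M}(g)$, where $f\in F^{p,q}_s$ and $g\in Mult(F^{p,q}_s\to F^{p,q}_{s'})$, and then bound each of the three resulting pieces in the norm of the target space $F^{p,q}_\sigma$, where $\sigma=s'+N-M$ in case \eqref{item:PNMmaps1}, $\sigma=s''<s$ in case \eqref{item:PNMmaps2}, and $\sigma=s$ in case \eqref{item:PNMmaps3}. The hypothesis on $g$ together with Proposition \ref{prop:grown}\eqref{item:grown1} places $g$ in $B^\infty_{s'-s}$, so Theorem \ref{thm:mf} applies with the choice $t=s-s'>0$. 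The integer parameters $k,J,L,m,l$ will be chosen at the end, large enough to meet both the hypotheses of Theorem \ref{thm:mf} and the extra lower bounds that appear in the estimates below.

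The first sum in \eqref{eqn:mf} consists of terms $f\,\PN^{N+J,M+i}(g)$ with $i$ in a bounded range. By Proposition \ref{prop:PNMonB} one has $\PN^{N+J,M+i}(g)\in B^\infty_{N+J-M-i-t}$, whose index exceeds $s$ once $J$ is sufficiently large; since $\sigma\le s$ in all three cases, Proposition \ref{prop:grown}\eqref{item:grown3} then puts $\PN^{N+J,M+i}(g)$ in $Mult(F^{p,q}_s\to F^{p,q}_\sigma)$, giving the desired estimate for this piece. For the second sum, whose generic term is $\PN^{N+J,M+i-j}(g\,R^\ell f)$ once $d^jf(R,\dots,R)$ is expanded as a linear combination of the radial derivatives $R^\ell f$ with $\ell\le j\le k$, Proposition \ref{prop:grown}\eqref{item:grown4} gives $g\in Mult(F^{p,q}_{s-\ell}\to F^{p,q}_{s'-\ell})$, so $g\,R^\ell f\in F^{p,q}_{s'-\ell}$, and Proposition \ref{prop:PNMonF} places $\PN^{N+J,M+i-j}(g\,R^\ell f)$ in $F^{p,q}_{s'-\ell+N+J-M-i+j}$. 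A direct check shows that this space is contained in $F^{p,q}_\sigma$ in each of the three cases, provided $J$ is chosen large enough.

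The delicate point is the error term $Q^{N,M,k}(f,g)$. Using the pointwise bound furnished by Theorem \ref{thm:mf} one writes $(1-|z|^2)^{m-\sigma}|(1+R)^m Q^{N,M,k}(f,g)(z)|$ as a sum of two $\PP^{A,B}$-type operators acting on $(1-|w|^2)^{l-s}|R^l f(w)|$, and the desired $T^{p,q}$-boundedness is then obtained from Lemma \ref{lem:PPF}, whose hypotheses reduce to a single exponent relation. A routine bookkeeping of exponents shows that in cases \eqref{item:PNMmaps1} and \eqref{item:PNMmaps3} this relation holds with strict inequality, which settles those cases. In the borderline case \eqref{item:PNMmaps2} with $s=s'+N-M$, the parameter $r=N-t-M$ in the factor $\Omega_r$ vanishes, producing an unavoidable logarithmic factor $\log(2/(1-|z|^2))$ in the bound; this logarithmic loss can be absorbed only by decreasing the target regularity by an arbitrarily small amount, which is precisely why the conclusion of \eqref{item:PNMmaps2} is stated for every $s''<s$ rather than $s''=s$. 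The main obstacle I anticipate is the careful coordination of the parameters $k,J,L,m,l$ so that the hypotheses of Theorem \ref{thm:mf}, Propositions \ref{prop:PNMonF} and \ref{prop:PNMonB}, and Lemma \ref{lem:PPF} are simultaneously fulfilled in all three cases.
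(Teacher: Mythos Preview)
Your proposal is correct and follows essentially the same path as the paper: apply Theorem \ref{thm:mf} with $t=s-s'$, handle the first sum via Proposition \ref{prop:PNMonB} and Proposition \ref{prop:grown}\eqref{item:grown3}, the second sum via Proposition \ref{prop:grown}\eqref{item:grown4} and Proposition \ref{prop:PNMonF}, and the error term via Lemma \ref{lem:PPF}, with the logarithm in case \eqref{item:PNMmaps2} absorbed by the drop from $s$ to $s''$. One small inaccuracy: $d^jf(R,\dots,R)=\sum_{|\alpha|=j}\frac{j!}{\alpha!}w^\alpha\partial^\alpha f$ is not literally a linear combination of the $R^\ell f$, but it is a $j$-th order operator sending $F^{p,q}_s$ to $F^{p,q}_{s-j}$, which is all you need (and is exactly how the paper treats it).
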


\begin{proof} 
In order to prove the three assertions, we will  apply Theorem \ref{thm:mf} with $ t=s-s'$, $k>\max\{s,M+n-1\}$, 
$J>\max\{k+n+N,2k+s-s'\}$, $L>N+J$ and $m>\max\{s+N-M,N-t+n+J\}$, $f\in F^{p,q}_s$ and $g\in Mult(F^{p,q}_s\to F^{p,q}_{s'})$.

Since $0<t<n+N$, it is clear that $t,k, J, L$ and $M$ satisfy the conditions of that theorem. 

If $-N<s'$, then by Propositions \ref{prop:grown} \eqref{item:grown1} and \ref{prop:embedF} \eqref{item:embedF5}, 
 we have  $g\in F^{p,q}_{s'}\cap B^\infty_{s'-s}\subset B^1_{-N}\cap B^\infty_{s'-s}$.

If $s'= N=0$ and $1<q\le 2$, then $g\in F^{p,q}_{0}\cap B^\infty_{-s}\subset H^p\cap B^\infty_{-s}$.

Since $s\ge 0>k+n+N-J>k+t-N-J$, we have $f\in F^{p,q}_s\subset B^1_{k+t-N-J}$. Then, $f$ and $g$ are in the conditions of Theorem \ref{thm:mf}.

Now, we estimate the terms appearing in \eqref{eqn:mf}.

 We will first check that in any of the hypotheses \eqref{item:PNMmaps1}, \eqref{item:PNMmaps2} or \eqref{item:PNMmaps3},  the first group of terms $f\PN^{N+J,M+i}(g)$, with $0\le i\le k+1-n-M$, are in $F^{p,q}_s$, and consequently in any of the Triebel-Lizorkin spaces considered in the three statements.
  
 Indeed, since $J>2k+s-s'$, then  $N+J-M-i>k+s-s'$. Thus, Proposition \ref{prop:PNMonB} 
 and the fact that $g\in B^\infty_{s'-s}$ give $\PN^{N+J,M+i}(g)\in B^\infty_k$. 
 Hence, since $k>s$,  by Proposition \ref{prop:grown}\eqref{item:grown3},  
$\PN^{N+J,M+i}(g)\in  Mult(F^{p,q}_s)$ and  $f\PN^{N+J,M+i}(g)\in F^{p,q}_{s}$.

Consider now the second group the terms in \eqref{eqn:mf}, that is, the terms of the form $\PN^{N+J,M+i-j}\left(g\,d^j\,f(R,\overset{(j)}{\cdots},R)\right)$, 
with $ k+1-n-M<i\le J$. We will prove that in all the cases these terms are in $F^{p,q}_{s'+N-M}$.   
Observe that if $f\in F^{p,q}_s$ then $d^j\,f(R,\overset{(j)}{\cdots},R)\in F^{p,q}_{s-j}$. 
So,  by  Proposition \ref{prop:grown}\eqref{item:grown4},  
$gd^j\,f(R,\overset{(j)}\cdots,R)\in F^{p,q}_{s'-j}$ and,  since $s'-j>-N-J$, Proposition \ref{prop:PNMonF} gives 
$$
\PN^{N+J,M+i-j}\left(g\,d^j\,f(R,\overset{(j)}{\cdots},R)\right)\in F^{p,q}_{s'+N-M+J-i}
\subset F^{p,q}_{s'+N-M}.
$$
And in the three cases, we obtain the desired conclusion, with the corresponding estimates of the norms.

To conclude, we consider the term $Q^{N,M,k}(f,g)$. Now, the three statements have to be treated separately.

Assume first, that we are in the conditions of statement \eqref{item:PNMmaps1},  $s>s'+N-M$. We want to prove that $ Q^{N,M,k}(f,g)\in F^{p,q}_{s'+N-M}$, that is  
\begin{equation*}
\Psi_1(z):=(1-|z|^2)^{m-s'-N+M}(1+R)^m  Q^{N,M,k}(f,g)(z)\in T^{p,q}.
\end{equation*}

By Proposition \ref{prop:grown}\eqref{item:grown1}, $g\in B^\infty_{s'-s}$, 
and if $l>s$ is a positive integer, then  
$\varphi(u):=|R^l_{n+L}f(u)|(1-|u|^2)^{l-s}\in T^{p,q}$. Thus, Theorem \ref{thm:mf} and 
Lemma \ref{lem:PPF} give 
\begin{align*}
&\|Q^{N,M,k}(f,g)\|_{F^{p,q}_{s'+N-M}}\approx \|\Psi_1\|_{T^{p,q}}\\
&\lesssim 
\|g\|_{B^\infty_{s'-s}}\left\|\int_\B |\varphi(u)| 
\frac{(1-|z|^2)^{m-s'-N+M}(1-|u|^2)^{N+J+s'-k-1}}{|1-z\overline u|^{n+M+J-k+m}} d\nu(u)\right\|_{T^{p,q}}\\
&\quad+\|g\|_{B^\infty_{s'-s}} 
\left\|\int_\B |\varphi(u)|\frac{(1-|z|^2)^{k-s+1}(1-|u|^2)^{L+s-1}}
{|1-z\overline u|^{n+L+k+1}}d\nu(u)\right\|_{T^{p,q}}\\
&\lesssim \|g\|_{B^\infty_{s'-s}}\|\varphi\|_{T^{p,q}}\approx \|g\|_{B^\infty_{s'-s}}\|f\|_{F^{p,q}_s}
\lesssim \|g\|_{Mult(F^{p,q}_s\to F^{p,q}_{s'})}\|f\|_{F^{p,q}_s}.
\end{align*}

Combining this result with the estimates of the other terms we ends 
the proof of \eqref{item:PNMmaps1}.

Let us prove \eqref{item:PNMmaps2}, that is, the case  $N-M=s-s'$. Analogously to the above case,  
for $s''<s$ we need to show that  
\begin{equation*}
\Psi_2(z):=(1-|z|^2)^{m-s''}(1+R)^m  Q^{N,M,k}(f,g)(z)\in T^{p,q}.
\end{equation*}

Since $\log\frac {2}{1-|z|^2} \lesssim (1-|z|^2)^{s''-s}$, we have
\begin{align*}
\|\Psi_2\|_{T^{p,q}}
&\lesssim 
\|g\|_{B^\infty_{s'-s}}\left\|\int_\B |\varphi(u)| 
\frac{(1-|z|^2)^{m-s}(1-|u|^2)^{N+J+s'-k-1}}{|1-z\overline u|^{n+M+J-k+m}} d\nu(u)\right\|_{T^{p,q}}\\
&\quad+\|g\|_{B^\infty_{s'-s}} 
\left\|\int_\B |\varphi(u)|\frac{(1-|z|^2)^{k-s+1}(1-|u|^2)^{L+s-1}}{|1-z\overline u|^{n+L+k+1}}d\nu(u)\right\|_{T^{p,q}},
\end{align*}
and by Lemma \ref{lem:PPF}, we obtain 
$$
\| Q^{N,M,k}(f,g)\|_{F^{p,q}_{s''}}\approx \|\Psi_2\|_{T^{p,q}}
\lesssim \|g\|_{B^\infty_{s'-s}}\|f\|_{F^{p,q}_s}
\lesssim \|g\|_{Mult(F^{p,q}_s\to F^{p,q}_{s'})}\|f\|_{F^{p,q}_s}.
$$ 

This concludes the proof of \eqref{item:PNMmaps2}.

If $N-M>s-s'$, then the same arguments used to prove the above cases give 
\begin{equation*}
\Psi_3(z):=(1-|z|^2)^{m-s}(1+R)^m  Q^{N,M,k}(f,g)(z)\in T^{p,q}.
\end{equation*}
Thus $Q^{N,M,k}(f,g)\in F^{p,q}_{s}$, which ends the proof. 
\end{proof}

\begin{cor}\label{cor:PNMMulttoCF}
Let $1< p,q<\infty$, $0\le s\le n/p$  
and $s'<  s$, satisfying either 
$-N<s'$ or $N=s'=0$ and $q\le 2$.

Let $k$ be a nonnegative integer satisfying $s>s'+N-M-k$, 
and $g\in Mult(F^{p,q}_s\to F^{p,q}_{s'})$.

Then, for any polynomial $p(x)$ of degree $k$, we have
$$
p(R)\,( \PN^{N,M}(g))\in Mult(F^{p,q}_s\to F^{p,q}_{s'+N-M-k}).
$$
If $s'=s-N+M$, then $\PN^{N,M}(g)\in CF^{p,q}_s$, that is, $\PN^{N,M}$ maps 
$Mult(F^{p,q}_s\to F^{p,q}_{s-N+M})$ to $CF^{p,q}_s$.
\end{cor}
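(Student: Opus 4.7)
The plan is to reduce the corollary to Proposition \ref{prop:PNMmaps} via an algebraic decomposition of $p(R)\PN^{N,M}(g)$. First I will note that differentiating under the integral sign together with Lemma \ref{lem:Rkt}(ii) gives the identity
$$R^j_{n+M}\PN^{N,M}(g)=\PN^{N,M+j}(g),\qquad j=0,1,\dots,k.$$
Since $R^j_{n+M}$ is a polynomial in $R$ of degree exactly $j$ with nonzero leading coefficient (because $n+M>0$), the family $\{R^j_{n+M}\}_{j=0}^{k}$ is a basis of the space of polynomials of degree at most $k$ in $R$. Consequently any $p(R)$ of degree $k$ decomposes as $p(R)\,\PN^{N,M}(g)=\sum_{j=0}^{k}c_j\,\PN^{N,M+j}(g)$, and it suffices to place each summand in $Mult(F^{p,q}_s\to F^{p,q}_{s'+N-M-k})$.

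For each fixed $j\in\{0,\dots,k\}$ I will then invoke Proposition \ref{prop:PNMmaps} with $M$ replaced by $M+j$. The sign of $s-(s'+N-M-j)$ selects one of its three parts, and in every case the hypothesis $s>s'+N-M-k$ is exactly what is needed to land in a common target. More precisely: if $s>s'+N-M-j$, part (i) gives membership in $Mult(F^{p,q}_s\to F^{p,q}_{s'+N-M-j})$, which embeds into $Mult(F^{p,q}_s\to F^{p,q}_{s'+N-M-k})$ because $j\le k$; if $s=s'+N-M-j$, part (ii) provides any target $s''<s$, and $s''=s'+N-M-k$ is legitimate by hypothesis; if $s<s'+N-M-j$, part (iii) yields $Mult(F^{p,q}_s)=Mult(F^{p,q}_s\to F^{p,q}_s)$, which again embeds into $Mult(F^{p,q}_s\to F^{p,q}_{s'+N-M-k})$ thanks to the same hypothesis. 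Summing with the coefficients $c_j$ establishes the first assertion.

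For the second assertion, where $s'=s-N+M$, I will apply the first part with $k=k_s$ and $p(R)=(1+R)^{k_s}$. Since $k_s>0$, the needed hypothesis $s>s'+N-M-k_s=s-k_s$ is automatic, so
$$(1+R)^{k_s}\PN^{N,M}(g)\in Mult(F^{p,q}_s\to F^{p,q}_{s-k_s}),$$
which is precisely the defining norm-condition of $CF^{p,q}_s$. To finish, I still need $\PN^{N,M}(g)\in F^{p,q}_s$. But Proposition \ref{prop:grown}(i) places $g\in F^{p,q}_{s'}$, and Proposition \ref{prop:PNMonF} maps $F^{p,q}_{s'}$ boundedly into $F^{p,q}_{s'+N-M}=F^{p,q}_s$, giving the claim.

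The main difficulty is not analytical but organizational: the key is recognizing that the threshold $s>s'+N-M-k$ is the common denominator that makes all three sub-cases of Proposition \ref{prop:PNMmaps} produce multipliers into the single target $F^{p,q}_{s'+N-M-k}$. Once the decomposition $p(R)\PN^{N,M}=\sum c_j\PN^{N,M+j}$ is in place, the rest is a termwise, case-by-case bookkeeping.
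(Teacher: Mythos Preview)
Your argument is correct and follows essentially the same route as the paper: both decompose $p(R)\,\PN^{N,M}(g)$ as $\sum_{j=0}^{k}c_j\,\PN^{N,M+j}(g)$ and then invoke Proposition~\ref{prop:PNMmaps} termwise, splitting into the three cases according to the sign of $s-(s'+N-M-j)$. Your justification of the decomposition via the basis $\{R^j_{n+M}\}_{j=0}^k$ and your explicit verification that $\PN^{N,M}(g)\in F^{p,q}_s$ for the second assertion are additional details the paper leaves implicit, but the core reasoning is identical.
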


\begin{proof}
Since 
$$
p(R)\,(\PN^{N,M}(g))=\sum_{j=0}^k a_j \PN^{N,M+j}(g),
$$
the first result follows from Proposition \ref{prop:PNMmaps}. 
Indeed, if $s'+N-M-j<s$, then we apply Proposition \ref{prop:PNMmaps}\eqref{item:PNMmaps1}, and if $s'+N-M-j\ge s$ then we apply Proposition \ref{prop:PNMmaps}\eqref{item:PNMmaps2},\eqref{item:PNMmaps3}.

The fact that if $s'=s-N+M$, then $\PN^{N,M}(g)\in CF^{p,q}_s$, follows from the above result applied to $p(x)=(1+x)^{k_s}$. 
\end{proof}

\begin{cor} \label{cor:TNMmaps}
Let $1< p,q<\infty$, $0\le s\le n/p$, $N\ge 0$, $M>-n$ and $s'<  s$, satisfying either 
$-N<s'$ or $N=s'=0$ and $q\le 2$.

Let $\T^{N,M}:F^{p,q}_{s'+N-M}\to F^{p,q}_{s'}$ be  the  inverse operator 
of $\PN^{N,M}:F^{p,q}_{s'}\to F^{p,q}_{s'+N-M}$ 

\begin{enumerate}
	\item \label{item:TNMmaps1} If $s>s'$ and $s>s'+N-M$, then $\T^{N,M}$ maps 
$Mult(F^{p,q}_s\to F^{p,q}_{s'+N-M})$ to $Mult(F^{p,q}_s\to F^{p,q}_{s'})$.

	\item \label{item:TNMmaps2} If $N-M<0$, then $\T^{N,M}$ maps $Mult(F^{p,q}_s\to F^{p,q}_{s+N-M})$ to $CF^{p,q}_s$. 
\end{enumerate}
\end{cor}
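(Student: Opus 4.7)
The plan is to exploit the integral representation of $\T^{N,M}$ provided by Proposition \ref{prop:PNMbij}: for any integer $l>\max\{0,-M\}$,
$$\T^{N,M}=R^l_{n+M}\circ \PN^{M+l,N},$$
so that $\T^{N,M}$ is a polynomial in $R$ of degree $l$ composed with a single Bergman-type operator $\PN^{M+l,N}$. This is exactly the setting of Corollary \ref{cor:PNMMulttoCF} (first part), which controls polynomial-in-$R$ expressions of $\PN^{N',M'}(g)$ on pointwise multiplier spaces, so both assertions reduce to applying that corollary to $\PN^{M+l,N}$ with an appropriate choice of the polynomial degree $k$.

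For (i), given $g\in Mult(F^{p,q}_s\to F^{p,q}_{s'+N-M})$, I apply Corollary \ref{cor:PNMMulttoCF} to $\PN^{M+l,N}$ with polynomial degree $k=l$ and input target smoothness $s''=s'+N-M$. The key inequality $s>s''+(M+l)-N-k$ collapses to $s>s'$, which is given, and the output target index $s''+(M+l)-N-k$ simplifies to $s'$. The corollary then places $\T^{N,M}(g)$ in $Mult(F^{p,q}_s\to F^{p,q}_{s'})$.

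For (ii), the membership $\T^{N,M}(g)\in F^{p,q}_s$ is immediate from the mapping properties of $R^l_{n+M}\circ \PN^{M+l,N}$ (Lemma \ref{lem:Rkt} and Proposition \ref{prop:PNMonF}), so I only need to verify the multiplier condition $(1+R)^{k_s}\T^{N,M}(g)\in Mult(F^{p,q}_s\to F^{p,q}_{s-k_s})$ defining $CF^{p,q}_s$. Since $(1+R)^{k_s}\T^{N,M}=(1+R)^{k_s}R^l_{n+M}\circ \PN^{M+l,N}$ is a polynomial in $R$ of degree $l+k_s$ applied to $\PN^{M+l,N}$, I invoke Corollary \ref{cor:PNMMulttoCF} with $k=l+k_s$ and input target $s+N-M$; the hypothesis reduces to $s>s-k_s$, which holds because $k_s\geq 1$, and the target index simplifies to $s-k_s$.

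The one technical point requiring attention, though not a serious obstacle, is the verification of the ambient hypothesis ``$-(M+l)<s''$ or $M+l=s''=0$ with $q\leq 2$'' of Corollary \ref{cor:PNMMulttoCF} for the operator $\PN^{M+l,N}$. Both alternatives reduce to inequalities in $l$ that can always be arranged by taking $l$ large, and the degenerate case $M+l=0$ is automatically excluded by the standing choice $l>\max\{0,-M\}$.
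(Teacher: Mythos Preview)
Your proof is correct and follows essentially the same route as the paper. Both arguments rest on the representation of $\T^{N,M}$ given in Proposition \ref{prop:PNMbij}; the paper writes it as the expanded sum $\T^{N,M}=\sum_{j=0}^{l} a_j \PN^{M+l,N+j}$ and applies Proposition \ref{prop:PNMmaps} (for (i)) and Corollary \ref{cor:PNMMulttoCF} (for (ii)) term by term, while you keep the factored form $\T^{N,M}=R^l_{n+M}\circ\PN^{M+l,N}$ and invoke Corollary \ref{cor:PNMMulttoCF} once with the polynomial $R^l_{n+M}$ (respectively $(1+R)^{k_s}R^l_{n+M}$). Since Corollary \ref{cor:PNMMulttoCF} is itself proved by expanding the polynomial and applying Proposition \ref{prop:PNMmaps}, the two presentations are equivalent; your packaging is arguably a bit cleaner for part (ii), where the paper's one-line ``follows from Corollary \ref{cor:PNMMulttoCF}'' leaves the reader to reconstruct exactly what you have written out.
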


\begin{proof}
For any  integer $l>\max\{0,-M\}$, formula \eqref{eqn:expT} gives 
\begin{align*}
\T^{N,M}(g)=\sum_{j=0}^{l} a_j \PN^{M+l,N+j}\,(g).
\end{align*}
If $s>s'$ and $s>s'+N-M$, then the operators $\PN^{M+l,N+j}$ satisfies the 
properties of Proposition \ref{prop:PNMmaps}, and thus \eqref{item:TNMmaps1}  follows from this proposition.

If $N-M<0$, then \eqref{item:TNMmaps2} follows from Corollary \ref{cor:PNMMulttoCF}.
\end{proof}

\begin{rem}\label{rem:obsthm}
Observe that  Proposition \ref{prop:PNMmaps} and Corollary \ref{cor:TNMmaps}  
gives immediately the proof of Theorem \ref{thm:bijectMF}\eqref{item:bijectMF1}.
\end{rem}

\begin{prop}\label{prop:charMssp}
Let $1<p,q<\infty$,  $0\le s\le n/p$ and  $ s'\le s$. Then, the following assertions are equivalent:
\begin{enumerate}
	\item \label{item:charMssp1} $g\in Mult(F^{p,q}_s\to F^{p,q}_{s'})$ if $s'<s$, 
or $g\in CF^{p,q}_s$ if $s'=s$.
	\item \label{item:charMssp4} For some (any) $M_1,\cdots, M_k>0$, then 
	$$
	R^1_{M_k}\cdots R^1_{M_1}g\in  Mult(F^{p,q}_s\to F^{p,q}_{s-k}).
	$$
\end{enumerate} 

In particular \eqref{item:charMssp1} is equivalent to $(1+R)^kg\in Mult(F^{p,q}_s\to F^{p,q}_{s'-k})$ for some (any) positive integer $k$, and it is also equivalent to 
$R^k_M g\in Mult(F^{p,q}_s\to F^{p,q}_{s'-k})$ for some (any) positive integer $k$ 
and some (any) $M>0$.
\end{prop}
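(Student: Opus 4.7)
The proof reduces to the base case $k = 1$ of the backward implication [(ii) $\Rightarrow$ (i)]. For the inductive step, set $h := R^1_{M_1} g$: the hypothesis then reads $R^1_{M_k}\cdots R^1_{M_2} h \in Mult(F^{p,q}_s \to F^{p,q}_{s'-k})$, which by induction applied with $s'-1$ in place of $s'$ (note $s'-1 < s$, so no $CF$-exception appears in the intermediate step) yields $h \in Mult(F^{p,q}_s \to F^{p,q}_{s'-1})$, reducing everything to the case $k = 1$.

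For the base case with $s' < s$, set $N := s - s' > 0$. Proposition \ref{prop:grown}\eqref{item:grown1} gives $R^1_M g \in B^\infty_{s'-1-s}$, and the explicit inversion $g(z) = M \int_0^1 t^{M-1}(R^1_M g)(tz)\,dt$ yields $g \in B^\infty_{-N}$. Corollary \ref{cor:RkvPN} is then applicable and produces the representation
$$
g = \sum_{j=1}^{m} a_j\,\PN^{N+j,N}(R^1_M g) + a_{m+1}\,\PN^{N+m,N}(g).
$$
By Proposition \ref{prop:PNMmaps}, each $\PN^{N+j,N}(R^1_M g)$ lies in $Mult(F^{p,q}_s \to F^{p,q}_{s'-1+j}) \subset Mult(F^{p,q}_s \to F^{p,q}_{s'})$ for $j \ge 1$; choosing $m$ with $m - N > s$, Proposition \ref{prop:PNMonB} combined with Proposition \ref{prop:grown}\eqref{item:grown3} places $\PN^{N+m,N}(g) \in B^\infty_{m-N}\subset Mult(F^{p,q}_s \to F^{p,q}_{s'})$. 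Summing yields (i).

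The base case with $s' = s$ is bootstrapped from the previous one: since $Mult(F^{p,q}_s \to F^{p,q}_{s-1}) \subset Mult(F^{p,q}_s \to F^{p,q}_{s''-1})$ for every $s'' < s$, the hypothesis gives $g \in Mult(F^{p,q}_s \to F^{p,q}_{s''})$ for all $s'' < s$. Proposition \ref{prop:grown}\eqref{item:grown5} then places both $(1+R)^{k_s-1} g$ and $(1+R)^{k_s-1} R^1_M g$ in $Mult(F^{p,q}_s \to F^{p,q}_{s-k_s})$, and the identity $(1+R)^{k_s}g = M(1+R)^{k_s-1} R^1_M g - (M-1)(1+R)^{k_s-1} g$ (derived from $Rg = M R^1_M g - Mg$) delivers $g \in CF^{p,q}_s$. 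The forward direction [(i) $\Rightarrow$ (ii)] is then easy: for $s' < s$, iterate Proposition \ref{prop:grown}\eqref{item:grown5} and use binomial inversion to get $R^j g \in Mult(F^{p,q}_s \to F^{p,q}_{s'-j})$, then expand $R^1_{M_k}\cdots R^1_{M_1}$ as a polynomial in $R$; for $s' = s$, apply the backward direction iteratively to $(1+R)^{k_s} g \in Mult(F^{p,q}_s \to F^{p,q}_{s-k_s})$ to obtain $(1+R)^j g \in Mult(F^{p,q}_s \to F^{p,q}_{s-j})$ for every $j \ge 1$ and $g \in Mult(F^{p,q}_s \to F^{p,q}_{s''})$ for every $s'' < s$, then run the same expansion.

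The main obstacle is the base case for an arbitrary $M > 0$. When $M = n + N_0$ with $N_0$ a nonnegative integer, $R^1_M$ coincides with $\PN^{N_0,N_0+1}$ and the argument is a direct application of $\T^{N_0,N_0+1}$ together with Corollary \ref{cor:TNMmaps}. For a generic $M$, however, $R^1_M$ is not of $\PN$-form and this direct route is blocked; Corollary \ref{cor:RkvPN} is precisely the tool that overcomes this by representing $g$ as a finite sum of lifting operators $\PN^{N+j,N}$ acting on $R^1_M g$, plus a remainder term on $g$ itself whose multiplier behaviour is controlled by the a priori $B^\infty_{-N}$ growth of $g$.
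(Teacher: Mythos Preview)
Your proof follows the same route as the paper's: the backward step for $k=1$, $s'<s$ hinges on Corollary~\ref{cor:RkvPN} combined with Proposition~\ref{prop:PNMmaps} for the sum and the $B^\infty$-estimate (via Proposition~\ref{prop:PNMonB}) for the remainder, exactly as in the paper; your handling of the case $s'=s$ via the identity $(1+R)^{k_s}g = M(1+R)^{k_s-1}R^1_M g - (M-1)(1+R)^{k_s-1}g$ differs in detail from the paper's (which routes through the equivalence with $(1+R)g\in Mult(F^{p,q}_s\to F^{p,q}_{s-1})$) but is equally valid.

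One small imprecision worth flagging: your claim that $\PN^{N+j,N}(R^1_M g)\in Mult(F^{p,q}_s\to F^{p,q}_{s'-1+j})$ follows from Proposition~\ref{prop:PNMmaps}\eqref{item:PNMmaps1} only when $s'-1+j<s$, i.e.\ $j\le N$. Since you take $m>N+s$, indices $j>N+1$ do occur in the sum, and for those the stated target index exceeds $s$, so that multiplier space is $\{0\}$ by Proposition~\ref{prop:grown}\eqref{item:grown6} and the claim as written is false. The fix is immediate: for those $j$ invoke parts~\eqref{item:PNMmaps2} or~\eqref{item:PNMmaps3} of Proposition~\ref{prop:PNMmaps} instead, which still place the term in $Mult(F^{p,q}_s\to F^{p,q}_{s'})$ (or even in $Mult(F^{p,q}_s)$). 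The paper avoids this by simply citing Proposition~\ref{prop:PNMmaps} without specifying the intermediate target.
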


\begin{proof} We consider first the case $s'<s$. 
We want to show that $g\in Mult(F^{p,q}_s\to F^{p,q}_{ s'})$ 
if and only if $R^1_M g\in  Mult(F^{p,q}_s\to F^{p,q}_{ s'-1})$ for some (any) $M>0$.

Clearly, iterating this result we obtain the equivalence between \eqref{item:charMssp1} and \eqref{item:charMssp4}.

If $g\in Mult(F^{p,q}_s\to F^{p,q}_{ s'})$, then  
by Proposition \ref{prop:grown}\eqref{item:grown5} we have that  $R^1_M g\in  Mult(F^{p,q}_s\to F^{p,q}_{ s'-1})$ for any $M>0$.

Now assume that $R^1_M g\in  Mult(F^{p,q}_s\to F^{p,q}_{ s'-1})$ for some $M>0$. 
By Proposition \ref{prop:grown}\eqref{item:grown1}, 
$R^1_M g\in B^\infty_{ s'-s-1}$ and thus $g\in B^\infty_{ s'-s}$.    
Therefore, Corollary \ref{cor:RkvPN} with $N>\max\{0,-s\}$  gives,
$$
g=\sum_{j=1}^m a_j\PN^{N+j,N}(R^1_M g)+a_{m+1}\PN^{N+m,N}(g).
$$
 
Thus, if $s'-s+m>s$ Propositions \ref{prop:PNMmaps}, \ref{prop:PNMonB} and \ref{prop:grown}\eqref{item:grown3} give  
 $$
 g\in Mult(F^{p,q}_s\to F^{p,q}_{ s'})+B^\infty_{ s'-s+m}
 \subset Mult(F^{p,q}_s\to F^{p,q}_{ s'}),
 $$ 
which proves the above mentioned equivalence.

Now we use the above results to prove the case $s'=s$. 

If $g\in CF^{p,q}_s$, that is $(1+R)^{k_s}g\in Mult(F^{p,q}_s\to F^{p,q}_{s-k_s})$, 
then the above  equivalence for the case $s'<s$ with $s'=s-1$, gives that $g\in CF^{p,q}_s$ 
if and only if $(1+R)g\in Mult(F^{p,q}_s\to F^{p,q}_{s-1})$.
Now the above proved equivalence for the case $s'<s$  gives 
$(1+R)g\in Mult(F^{p,q}_s\to F^{p,q}_{s-1})$
if and only if
$$ 
R^1_{M_k}\cdots R^1_{M_1}(1+R) g=(1+R)R^1_{M_k}\cdots R^1_{M_1}g\in  Mult(F^{p,q}_s\to F^{p,q}_{s-k-1})
$$
 for some (any) $M_1,\cdots,M_k>0$. This is also equivalent to 
$$
R^1_{M_k}\cdots R^1_{M_1}g\in  Mult(F^{p,q}_s\to F^{p,q}_{s-k}),
$$
which concludes the proof.
\end{proof}

\begin{cor}
$(1+R)^k$ and $R^k_M$ are bijective operators from $Mult(F^{p,q}_s\to F^{p,q}_{s'})$ 
to $Mult(F^{p,q}_s\to F^{p,q}_{s'-k})$ if $s'<s$, and from $CF^{p,q}_s$ to $Mult(F^{p,q}_s\to F^{p,q}_{s-k})$.
\end{cor}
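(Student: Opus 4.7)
The plan is to deduce the corollary as an almost immediate consequence of Proposition \ref{prop:charMssp}, the only additional input being that the operators $(1+R)^k$ and $R^k_M$ are already bijective on the space $H(\B)$ of all holomorphic functions on $\B$. Since both operators act diagonally on homogeneous polynomial expansions (multiplying the $j$-th homogeneous component by $(1+j)^k$ and by $\prod_{i=0}^{k-1}(1+\tfrac{j}{M+i})$ respectively, and these factors are nonzero and bounded away from $0$ and $\infty$ in $j$), they are bijections of $H(\B)$ with inverses given by the same diagonal recipe. This takes care of injectivity and provides a formal holomorphic candidate for the preimage in the surjectivity argument.

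For the mapping property, if $s'<s$ and $g\in Mult(F^{p,q}_s\to F^{p,q}_{s'})$, Proposition \ref{prop:charMssp} (its last paragraph, applied to the chain $M_1=\cdots=M_k=M$ or directly to $(1+R)^k$) gives $(1+R)^k g,\, R^k_M g\in Mult(F^{p,q}_s\to F^{p,q}_{s'-k})$; if $s'=s$ and $g\in CF^{p,q}_s$, the same proposition yields $(1+R)^k g,\, R^k_M g\in Mult(F^{p,q}_s\to F^{p,q}_{s-k})$. Thus both operators land in the claimed target spaces with controlled norm.

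For surjectivity, fix $h$ in the target space $Mult(F^{p,q}_s\to F^{p,q}_{s'-k})$ (respectively $Mult(F^{p,q}_s\to F^{p,q}_{s-k})$). Define $g\in H(\B)$ as the unique holomorphic function satisfying $(1+R)^k g=h$ (respectively $R^k_M g=h$), using the diagonal inverse from the first paragraph. Then Proposition \ref{prop:charMssp} applied in the reverse direction shows that $g$ belongs to $Mult(F^{p,q}_s\to F^{p,q}_{s'})$ when $s'<s$, and to $CF^{p,q}_s$ when $s'=s$. Combined with injectivity on $H(\B)$, this establishes that each of $(1+R)^k$ and $R^k_M$ is a bijection between the stated spaces.

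I do not foresee any real obstacle: the entire technical content — namely the nontrivial implication that a bounded derivative forces the original function to be a multiplier — has already been handled in Proposition \ref{prop:charMssp} via Corollary \ref{cor:RkvPN} and Proposition \ref{prop:PNMmaps}. The only thing to be careful about is that the diagonally-defined preimage is truly a holomorphic function on $\B$ and not merely a formal series; this is immediate from the uniform boundedness of the inverse multipliers $(1+j)^{-k}$ and $\prod_{i=0}^{k-1}(1+\tfrac{j}{M+i})^{-1}$, so the radius of convergence is preserved.
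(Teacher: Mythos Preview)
Your proposal is correct and matches the paper's intended argument: the corollary is stated in the paper without proof precisely because it is an immediate consequence of Proposition~\ref{prop:charMssp}, and you have spelled out exactly that deduction. The only extra ingredient you supply---bijectivity of $(1+R)^k$ and $R^k_M$ on $H(\B)$ via their diagonal action on homogeneous expansions---is standard and is what makes the equivalence in Proposition~\ref{prop:charMssp} translate into a bijection statement.
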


\subsection{Proof of Theorem \ref{thm:bijectMF}}
\begin{proof}

As we have observed in Remark \ref{rem:obsthm}, the proof of \ref{thm:bijectMF}\eqref{item:bijectMF1}
 follows from Proposition \ref{prop:PNMmaps} and Corollary \ref{cor:TNMmaps}.

To prove \eqref{item:bijectMF3}, we consider now $N-M<0$.
In Corollary \ref{cor:TNMmaps} we have proved that $\T^{N,M}$, the inverse operator  of $\PN^{N,M}$, maps $Mult(F_s^{p,q}\to F_{s+N-M}^{p,q})$ to $CF_s^{p,q}$. Hence, 
it is sufficient to prove that $\PN^{N,M}$ maps $CF^{p,q}_s$ to 
$Mult(F^{p,q}_s\to F^{p,q}_{s+N-M})$.
 
If $g\in CF^{p,q}_s$, by Proposition \ref{prop:charMssp}, $R^k_{n+N}g\in Mult(F^{p,q}_s\to F^{p,q}_{s-k})$.  Since $\PN^{N,M}(g)=\PN^{N+k,M}(R^k_{n+N}g)$, 
  by Proposition \ref{prop:PNMmaps} we obtain 
$\PN^{N,M}(g)\in Mult(F^{p,q}_s\to F^{p,q}_{s+N-M})$,
which concludes the proof of \eqref{item:bijectMF3}.

In order to prove \eqref{item:bijectMF2}, we consider now $N-M>0$.  By Corollary \ref{cor:PNMMulttoCF}, $\PN^{N,M}$ is a bounded operator from  $Mult(H^p_s\to F^{p,q}_{s-N+M})$ to $CF^{p,q}_s$. In the other direction, since formula \eqref{eqn:expT} gives $ \T^{N,M}=\sum_{j=0}^{l} a_j \PN^{M+l,N+j}$, Proposition \ref{prop:PNMmaps} gives that 
 $\T^{N,M}$ maps $CF^{p,q}_s$ to $Mult(F^p_s\to F^{p,q}_{s-N+M})$. 
\end{proof}

\subsection{Characterizations of $Mult(F^{p,q}_s)$}

The results in the above sections provide a 
description of the space of multipliers of $F_s^{p,q}$ that we summarize in the following theorem.

\begin{thm} \label{thm:charmult}
Let $1<p,q<\infty$ and $0\le s\le n/p$. Then the following assertions are equivalent.

\begin{enumerate}
	\item\label{item:charmult1} $g\in Mult(F^{p,q}_s)$. 
	\item\label{item:charmult2}  $g\in H^\infty\cap CF^{p,q}_s$.
	\item\label{item:charmult3}  $g\in H^\infty$ and for some (any) $M>N\ge 0$ satisfying either 
	$N>0$ or $N=s= 0$ and $q\le 2$, then $\PN^{N,M}(g)\in Mult(F^{p,q}_s\to F^{p,q}_{s+N-M})$.
	\item \label{item:charmult4} $g\in H^\infty$ and for some (any) $M_1,\cdots, M_k>0$, then 
	$R^1_{M_k}\cdots R^1_{M_1}g\in Mult(F^{p,q}_s\to F^{p,q}_{s-k})$. 
	\item \label{item:charmult5} $g\in H^\infty$ and for some (any) positive integer $k>s$, then the measure $d\mu_g(z)=|(1+R)^kg(z)|^p(1-|z|^2)^{(k-s)p-1}d\nu(z)$ is a Carleson measure in the sense that  $F_s^{p,q}$ is embedded in $T^{p,q}( \mu_g)$.
\end{enumerate}
\end{thm}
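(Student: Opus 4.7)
My approach is to hub every condition through \eqref{item:charmult2}, since $H^\infty\cap CF^{p,q}_s$ has been the central object of the preceding sections. For \eqref{item:charmult1}$\Leftrightarrow$\eqref{item:charmult2}, the forward direction is routine: $Mult(F^{p,q}_s)\subset H^\infty$ is Proposition~\ref{prop:grown}\eqref{item:grown2}, and Proposition~\ref{prop:grown}\eqref{item:grown5} applied at $k=k_s$ gives $(1+R)^{k_s}g\in Mult(F^{p,q}_s\to F^{p,q}_{s-k_s})$, which is exactly $g\in CF^{p,q}_s$. For the converse, I would start from $(1+R)^{k_s}g\in Mult(F^{p,q}_s\to F^{p,q}_{s-k_s})$ and descend one derivative at a time: for each $j\in\{1,\dots,k_s\}$, setting $h=(1+R)^jg$ and $s'=s-j$, the hypothesis reads as $(1+R)^{k_s-j}h\in Mult(F^{p,q}_s\to F^{p,q}_{s'-(k_s-j)})$, and the nontrivial implication in Proposition~\ref{prop:charMssp} (ultimately anchored in the reproducing formula of Corollary~\ref{cor:RkvPN}) forces $h\in Mult(F^{p,q}_s\to F^{p,q}_{s'})$. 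Collecting this for all $j$ together with $g\in H^\infty$, Corollary~\ref{cor:charMFs} concludes $g\in Mult(F^{p,q}_s)$.

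The equivalences \eqref{item:charmult2}$\Leftrightarrow$\eqref{item:charmult4} and \eqref{item:charmult2}$\Leftrightarrow$\eqref{item:charmult3} each reduce to a single earlier result. The first is precisely the $s'=s$ instance of Proposition~\ref{prop:charMssp}, with the $H^\infty$ hypothesis appended on both sides. The second follows from the bijection $\PN^{N,M}\colon CF^{p,q}_s\to Mult(F^{p,q}_s\to F^{p,q}_{s+N-M})$ furnished by Theorem~\ref{thm:bijectMF}\eqref{item:bijectMF3} (applicable since $N-M<0$ and the hypotheses on $(N,M)$ match those of that theorem); the $H^\infty$ condition is carried along on both sides, and the bijectivity in both directions reconciles the ``some (any)'' formulation.

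For \eqref{item:charmult2}$\Leftrightarrow$\eqref{item:charmult5}, the multiplier inequality expressing $(1+R)^kg\cdot f\in F^{p,q}_{s-k}$ rewrites, via the zero-derivative representation of the $F^{p,q}_{s-k}$ norm (valid because $s-k<0$), directly as the Carleson-type embedding $f\in T^{p,q}(\mu_g)$ for the stated $\mu_g$; this is the reformulation already noted right after the definition of $CF^{p,q}_s$ in the introduction, and the freedom to take any integer $k>s$ in place of $k_s$ is again supplied by Proposition~\ref{prop:charMssp}. I expect the main substantive step in the whole argument to be the derivative descent in \eqref{item:charmult2}$\Rightarrow$\eqref{item:charmult1}; all other arrows merely repackage previously established bijectivity statements for the operators $\PN^{N,M}$.
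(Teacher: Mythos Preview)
Your proposal is correct and follows essentially the same route as the paper: the paper also hubs through \eqref{item:charmult2}, invoking Corollary~\ref{cor:charMFs} together with Proposition~\ref{prop:charMssp} for \eqref{item:charmult1}$\Leftrightarrow$\eqref{item:charmult2}, Theorem~\ref{thm:bijectMF} for \eqref{item:charmult2}$\Leftrightarrow$\eqref{item:charmult3}, Proposition~\ref{prop:charMssp} for \eqref{item:charmult2}$\Leftrightarrow$\eqref{item:charmult4}, and observes that \eqref{item:charmult5} is just the Carleson reformulation of the case $M_j=1$ in \eqref{item:charmult4}. Your derivative-descent argument for \eqref{item:charmult2}$\Rightarrow$\eqref{item:charmult1} simply unpacks what the combination of Corollary~\ref{cor:charMFs} and Proposition~\ref{prop:charMssp} already encodes.
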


\begin{proof} The equivalence between \eqref{item:charmult1} and \eqref{item:charmult2} follows 
from Corollary \ref{cor:charMFs} and Proposition \ref{prop:charMssp}. 
The equivalence between \eqref{item:charmult2} and \eqref{item:charmult3} follows 
from  Theorem \ref{thm:bijectMF}, and the equivalence between 
\eqref{item:charmult2} and \eqref{item:charmult4} follows from Proposition \ref{prop:charMssp}.

The last statement is equivalent to $(1+R)^k g\in Mult(F^{p,q}_s\to F^{p,q}_{s-k})$, which corresponds to the case $M_j=1$ in \eqref{item:charmult4}.
\end{proof}

\section{Multipliers of Hardy-Sobolev spaces}\label{sec:multHS}
 \subsection{Proof of Theorem \ref{thm:bijectMH}} Let us begin with a lemma.

\begin{lem} \label{prop:CsM}
Let $1<p<\infty$ and $0<s\le n/p$. 
For $g\in H^1$, the following assertions are equivalent.

\begin{enumerate}
	\item $g\in Mult(H^p_s\to H^p)$, that is $g\in H^p$ and $|g|^p d\sigma$ is a Carleson measure for $H^p_s$ on $\bB$.
	\item $\mathcal{C}_s(g)\in CF^{p,2}_s$.
\end{enumerate}
\end{lem}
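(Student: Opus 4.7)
The plan is to identify this lemma as a direct translation of a special case of Theorem \ref{thm:bijectMF}\eqref{item:bijectMF2}, together with a simple unpacking of the definition of $Mult(H^p_s\to H^p)$. The first task is to verify the "that is" clause in assertion (1): for $g\in H^1$, the condition $g\in Mult(H^p_s\to H^p)$ means $\|gf\|_{H^p}^p=\int_\bB|f|^p|g|^p\,d\sigma\lesssim\|f\|_{H^p_s}^p$ for every $f\in H^p_s$, which is exactly the Carleson embedding $H^p_s\hookrightarrow L^p(|g|^p\,d\sigma)$. Testing with $f\equiv 1$ (a constant belongs to $H^p_s$) forces $g\in L^p(\bB)$, so together with $g\in H^1$ we get $g\in H^p$.

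The main equivalence (1)$\Leftrightarrow$(2) is then obtained by invoking Theorem \ref{thm:bijectMF}\eqref{item:bijectMF2} with parameters $N=0$, $M=-s$, $q=2$. We then verify the hypotheses: the secondary index $s'$ of the theorem is $s-N+M=0<s$, the side condition "$N=s'=0$ and $q\le 2$" holds, and $N-M=s>0$. Since $\PN^{0,-s}=\mathcal{C}_s$, $F^{p,2}_s=H^p_s$ and $F^{p,2}_0=H^p$, the theorem states that
\[
\mathcal{C}_s\colon Mult(H^p_s\to H^p)\longrightarrow CF^{p,2}_s
\]
is a bijection. The direction (1)$\Rightarrow$(2) is just the boundedness half of this statement. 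For (2)$\Rightarrow$(1), given $g\in H^1$ with $\mathcal{C}_s(g)\in CF^{p,2}_s$, the bijectivity supplies $\tilde g\in Mult(H^p_s\to H^p)\subset H^p$ with $\mathcal{C}_s(\tilde g)=\mathcal{C}_s(g)$; one concludes $g=\tilde g$ by the injectivity of $\mathcal{C}_s$ on $H^1$, which is a standard consequence of the Cauchy-type reproducing identity underlying Proposition \ref{prop:PNMbij} (viewing $\mathcal{C}_s$ as a fractional Cauchy projection, its kernel is trivial on $H^1$).

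The main obstacle is essentially non-existent here, because all the analytical work has already been absorbed into Theorem \ref{thm:bijectMF}. The only point requiring care is the closing step of (2)$\Rightarrow$(1), where the bijection gives a candidate $\tilde g$ in a possibly strictly smaller space than $H^1$, and one must justify $g=\tilde g$ from the injectivity of $\mathcal{C}_s$ on $H^1$. Once this injectivity is noted, the lemma follows immediately from the dictionary $\mathcal{C}_s=\PN^{0,-s}$, $H^p_s=F^{p,2}_s$.
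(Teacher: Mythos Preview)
Your proposal is correct and follows essentially the same route as the paper, which simply says the lemma is a reformulation of Theorem \ref{thm:bijectMF}\eqref{item:bijectMF2} with $N=0$, $M=-s$, $q=2$. Your treatment is in fact slightly more careful than the paper's: you explicitly address why the a priori hypothesis $g\in H^1$ (rather than $g\in Mult(H^p_s\to H^p)$) causes no trouble in the direction (2)$\Rightarrow$(1), invoking injectivity of $\mathcal{C}_s$ on $H^1$ (equivalently, that the inverse operator $\T^{0,-s}$ in Proposition \ref{prop:PNMbij} recovers $g$ from $\mathcal{C}_s(g)$ for any $g\in H^1$), whereas the paper leaves this implicit.
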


\begin{proof}
This is a simple reformulation of Theorem \ref{thm:bijectMF} \eqref{item:bijectMF2}, applied to $F_s^{p,2}=H_s^p$, $F_s^{p,0}=H^p$ and $\PN^{0,-s}={\mathcal C}_s$.
\end{proof}

 \begin{proof}[Proof of Theorem \ref{thm:bijectMH}] 

The equivalence \eqref{item:bijectMH1} $\Leftrightarrow$\eqref{item:bijectMH4} is just a reformulation of the equivalence \eqref{item:charmult1} $\Leftrightarrow$ \eqref{item:charmult3} in Theorem \ref{thm:charmult}, for $N=0$ and $q=2$.
For the implication \eqref{item:bijectMH2}$\Rightarrow$ \eqref{item:bijectMH1}, if $g={\mathcal C}_s(h)$, we apply Lemma \ref{prop:CsM} to the function $h$ and obtain that $g\in H^\infty\cap CF^{p,2}_s$. Theorem \ref{thm:charmult} gives \eqref{item:bijectMH1}.
Finally, let us prove that  \eqref{item:bijectMH1}$\Rightarrow$ \eqref{item:bijectMH2}. Again by Theorem \ref{thm:charmult}, we have that $g\in H^\infty\cap CF^{p,2}_s$, and Theorem \ref{thm:bijectMF} gives then that $g=\PN^{0,-s}(h)$, with $h\in Mult(H^p_s\to H^p)$.
\end{proof}

\subsection{The case $0<n-sp<1$}

Theorem \ref{thm:bijectMH},\eqref{item:bijectMH2}, gives a characterization of $g\in Mult(H_s^p)$ in terms on one hand of the boundedness of the function $g$, on the other hand of the existence of a function $h\in H^p$ such that $g={\mathcal C}_s(h)$ and $|h|^pd\sigma$ is a Carleson measure on $\bB$ for $H_s^p$.  
If $0<n-sp<1$ we can give a characterization of these measures in terms of capacities.

We recall that if $E\subset{{\bB}}$, $1<p<\infty$ and $0<s<n$, the nonisotropic Riesz capacity of the set $E$ is given by
$$C_{s,p}(E)= \inf \{||f||_{L^p}^p\,;\, f\geq0,\, \mathcal{I}_s(f)\geq 1 \,\,{\rm on}\,\, E\,\}.$$
 
 \begin{defn}
 If $1<p<\infty$ and $0<s<n$, we say that a finite positive Borel measure $\mu$ on ${\bB}$ is  a trace measure for the Hardy-Sobolev  space $H_s^p$ if  there exists $C>0$ such that for any $f\in H_s^p$,
\begin{equation}\label{qcarlesonmeasure}\|M_{rad}[f]\|_{L^p(d\mu)}\leq C\|f\|_{H_s^p},\end{equation}
where $M_{rad}[f](\zeta)=\sup_{r<1}|f(r\zeta)|$.
\end{defn}
 \begin{defn}
 If $1<p<\infty$ and $0<s<n$, we say that a finite positive Borel measure $\mu$ on ${\bB}$ is  a trace measure for the    space $\mathcal{I}_s[L^p]$ if  there exists $C>0$ such that for any $f\in L^p$,
\begin{equation}\label{qcarlesonmeasure2}\|\mathcal{I}_s[f]\|_{L^p(d\mu)}\leq C\|f\|_{L^p}.\end{equation}

\end{defn}
We have the following theorem
\begin{thm} \label{thm:n-sp<1}
Let $1<p<\infty$ and $0<n-sp<1$. If $g\in H^p$  and $k>s$, then we have that the following conditions are equivalent:

\begin{enumerate}
	\item The measure $d\mu=|g|^pd\sigma$ is a trace measure for $H_s^p$.
	\item The measure $d\mu=|g|^pd\sigma $ is a trace measure for $\mathcal{I}_s[L^p]$.
	\item\label{item:n-sp<1} The measure $d\mu=|g|^pd\sigma$ is a Carleson measure on $\bB$ for $H_s^p$.
	\item There exists $C>0$ such that for any closed set $E\subset \bB$, $\mu(E)\leq C C_{s,\,p}(E)$.
	\item If  $d\mu_g(z)=|(1+R)^k (\mathcal{C}_s(g)(z))|^2 (1-|z|^2)^{2(k-s)-1}d\nu(z)$, then   $H^p_s\subset  T^{p,2}(d\mu_g)$.
	\item If $d\mu_g(z)=|(1+R)^k (\mathcal{C}_s(g)(z))|^2 (1-|z|^2)^{2(k-s)-1}d\nu(z)$ then   $\mathcal{I}_s[L^p]\subset  T^{p,2}(d\mu_g)$.
\end{enumerate}

\end{thm}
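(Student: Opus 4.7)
The plan is to establish the six-fold equivalence by a short cycle combining three ingredients: (a) Lemma \ref{prop:CsM} and Theorem \ref{thm:charmult}, which convert boundary Carleson conditions on $|g|^p\,d\sigma$ into tent-space conditions involving $\mathcal{C}_s(g)$; (b) the classical Kerman--Sawyer / Maz'ya / Adams capacitary trace inequality for Riesz potentials, transferred to the nonisotropic homogeneous space $(\bB,|1-z\overline\zeta|,d\sigma)$ of dimension $n$; and (c) a pointwise comparison between $\mathcal{C}_s$ and $\mathcal{I}_s$ on $\bB$, effective precisely when $n-sp<1$.

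The easy equivalences are as follows. $(3)\Leftrightarrow(5)$ is immediate from Lemma \ref{prop:CsM}, which identifies (3) with $\mathcal{C}_s(g)\in CF_s^{p,2}$, combined with the tent-space reformulation of $CF_s^{p,2}$ recorded just after its definition in Section \ref{sec:intro} (any $k>s$ giving equivalent conditions). A completely parallel argument with the real Riesz potential $\mathcal{I}_s$ in place of $\mathcal{C}_s$, using the well-known tent-space characterization of trace measures for $\mathcal{I}_s[L^p]$, yields $(2)\Leftrightarrow(6)$. The equivalence $(2)\Leftrightarrow(4)$ is the classical capacitary strong-type trace inequality for Riesz potentials on a homogeneous space. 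Finally, $(1)\Rightarrow(3)$ is trivial since $|f|\le M_{\mathrm{rad}}[f]$ on $\bB$.

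What remains --- and what constitutes the heart of the theorem --- are the assertions $(5)\Leftrightarrow(6)$ and $(1)\Leftrightarrow(2)$, which both express that $H_s^p$ and $\mathcal{I}_s[L^p]$ have the same trace/Carleson measures on $\bB$. I would obtain these from a two-sided pointwise comparison between the Cauchy--Szeg\H{o} extension and the Riesz potential on $\bB$: on the one hand an upper bound $M_{\mathrm{rad}}[\mathcal{C}_s(h)](\zeta)\lesssim \mathcal{I}_s(|h|)(\zeta)+M_{\mathrm{HL}}[h](\zeta)$, proved by decomposing the Cauchy--Szeg\H{o} kernel into its modulus and an oscillatory tangential remainder (the latter controlled by the nonisotropic Hardy--Littlewood maximal function); on the other hand a lower bound of the form $|\mathcal{C}_s(\phi_E)|\gtrsim\mathcal{I}_s(\phi_E)$ on a suitable admissible enlargement of a compact $E\subset\bB$, where $\phi_E\ge 0$ is the capacitary extremal of $E$, with Theorem \ref{prop:multiplierscasp=2} providing the needed membership and norm control of $\mathcal{C}_s(\phi_E)$ in $H_s^p$. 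In both comparisons the hypothesis $n-sp<1$ is precisely what makes the tangential integrals convergent, and I expect this pointwise kernel decomposition to be the main technical obstacle.
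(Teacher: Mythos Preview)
Your scheme agrees with the paper on the easy links: $(1)\Rightarrow(3)$ is trivial, $(3)\Leftrightarrow(5)$ is exactly Lemma~\ref{prop:CsM} together with Proposition~\ref{prop:charMssp}, and $(2)\Leftrightarrow(4)$ is the classical capacitary trace criterion. The paper does not argue $(2)\Leftrightarrow(6)$ by any ``parallel'' to Lemma~\ref{prop:CsM}; note that the tent measure $\mu_g$ in (6) is still built from the \emph{holomorphic} potential $\mathcal C_s(g)$, so there is no symmetric $\mathcal I_s$-analogue of Lemma~\ref{prop:CsM} available. Instead, the paper closes the cycle by quoting \cite{cohnverbitsky} for $(1)\Leftrightarrow(4)$ and for $(3)\Rightarrow(4)$, and \cite{cascanteortega3} for $(5)\Leftrightarrow(6)$.

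The substantive gap is in your ``hard'' direction, the lower bound $|\mathcal C_s(\phi_E)|\gtrsim \mathcal I_s(\phi_E)$ with $\phi_E=(\mathcal I_s(\mu_E))^{p'-1}$. Such a pointwise minorization rests on $\mathrm{Re}\,(1-z\overline\zeta)^{-(n-s)}\approx |1-z\overline\zeta|^{-(n-s)}$, which holds only when $n-s<1$; the hypothesis here is merely $n-sp<1$, and for $p>1$ this is strictly weaker (e.g.\ $n=2$, $p=4$, $s=0.3$). When $n-s\ge 1$ the Cauchy kernel oscillates and $\mathcal C_s(\phi_E)$ can be small on $E$ even though $\mathcal I_s(\phi_E)\ge 1$ there, so your test-function argument for $(1)\Rightarrow(4)$ breaks down, and the same obstruction blocks your route to $(5)\Leftrightarrow(6)$. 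Theorem~\ref{prop:multiplierscasp=2} does not rescue this: it gives boundedness and multiplier control of $\mathcal C_s(\phi_E)$, i.e.\ upper bounds, but supplies no lower bound on $E$. This is precisely why \cite{cohnverbitsky} replace $\mathcal C_s(\phi_E)$ by the nonlinear holomorphic potentials $\mathcal U_{s,p,\lambda}(\mu_E)$, $\mathcal V_{s,p,\lambda}(\mu_E)$ of Proposition~\ref{prop:pgreaterthan2}, constructed exactly so as to dominate $\mathcal W_{s,p}(\mu_E)$ from below under the weaker condition $n-sp<\lambda<1$; the paper's proof simply invokes that construction.
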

\begin{proof}
The fact that (i) and  (iv) are equivalent, is proved in \cite{cohnverbitsky}. The equivalence of (ii) and  (iv) of the trace measures for $\mathcal{I}_s[L^p]$ can be deduced from the corresponding result for $\R^n$ (see for instance the book \cite{adamshedberg} and the references therein). Condition (i) implies (iii), and the proof of \cite{cohnverbitsky} can be easily adapted to show that (iii) implies (iv). The equivalence between (iii) and (v) is a consequence of Propositions \ref{prop:CsM} and \ref{prop:charMssp} and does not require the extra condition $n-sp<1$. Finally the equivalence of (v) and (vi) was proved in \cite{cascanteortega3}.
\end{proof}

Let us see that in the last theorem, we can give another description of the fact that $f\in Mult(H_s^p)$.
 We recall that a nonnegative measurable weight  $w$ on $\bB$ is in $A_p$, $1<p<\infty$,  if there exists $C>0$ such that for any nonisotropic ball $B\subset \bB$, $B=B(\zeta,r)=\{\eta\in{\bB};\, |1-\zeta\overline{\eta}|<2r\}$, $\zeta\in\bB$, $r<1$, 
$$\left( \frac{1}{|B|}\int_B w d\sigma\right)\left( \frac{1}{|B|}\int_B w^{\frac{-1}{p-1}}d\sigma \right)^{p-1}\leq C.$$

A weight $w$ is in $A_1$ if ${\displaystyle Mw(\zeta):=\sup_{B\ni \zeta} \frac{\int_Bw}{|B|}\lesssim w(\zeta)}$, a.e. $\zeta\in\bB$.

The following result can be found in \cite{mazyaverbitsky}.
\begin{prop}\label{prop:tracemeasure3}
Let $g$ be an integrable function on $\bB$ such that $|g|^pd\sigma$ is a trace measure for $\mathcal{I}_s[L^p]$.
Let $h$ be a measurable function on $\bB$ satisfying that there exists $C>0$ such that for any  weight $w$ in $A_1$,
\begin{equation}\label{eqn:weightestimate}
\int_{\bB} |h|^pw\leq C \int_{\bB} |g|^p w.
 \end{equation}
We then have that the measure $|h|^pd\sigma$ is a trace measure for $\mathcal{I}_s[L^p]$.
\end{prop}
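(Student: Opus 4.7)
The plan is to transfer the trace measure property from $|g|^p d\sigma$ to $|h|^p d\sigma$ by testing the $A_1$-weighted hypothesis against a suitably chosen family of weights. A key reformulation is that $\mu$ is a trace measure for $\mathcal{I}_s[L^p]$ precisely when the capacitary inequality $\mu(E) \lesssim C_{s,p}(E)$ holds for every compact $E \subset \bB$; in the Hardy-Sobolev range this is part of Theorem \ref{thm:n-sp<1}, and in general it is the classical Adams-type characterization transplanted to the nonisotropic setting. Thus the goal becomes to show $\int_E |h|^p d\sigma \lesssim C_{s,p}(E)$ for every compact $E \subset \bB$.

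The main tool is the Coifman-Rochberg theorem: for any nonnegative locally integrable $\phi$ and $\delta \in (0,1)$, the weight $(M\phi)^\delta$ lies in $A_1$ with constant depending only on $\delta$ and $n$. Choosing $\phi = \chi_E$ produces an $A_1$ weight $w_E := (M\chi_E)^\delta$ that is bounded below by a positive constant on $E$. The hypothesis then yields
$$\int_E |h|^p d\sigma \le c^{-1}\int_{\bB} |h|^p w_E \, d\sigma \le c^{-1} C \int_{\bB} |g|^p w_E \, d\sigma,$$
reducing the problem to establishing $\int_{\bB} |g|^p w_E \, d\sigma \lesssim C_{s,p}(E)$.

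This last bound is the main obstacle, and it is where the hypothesis genuinely needs all $A_1$ weights rather than a single one. Since $|g|^p d\sigma$ is only known to control integrals of $|\mathcal{I}_s[f]|^p$ for $f \in L^p$, one slices $w_E$ by its level sets, combines the trace inequality for $|g|^p d\sigma$ with the capacitary estimate for the maximal function $C_{s,p}(\{M\chi_E > t\}) \lesssim t^{-p}C_{s,p}(E)$ (or a suitable refinement), and carries out the dyadic summation
$$\int_{\bB} |g|^p w_E \, d\sigma \approx \sum_k 2^{-k\delta} \int_{\{M\chi_E > 2^{-k}\}} |g|^p d\sigma \lesssim \sum_k 2^{-k\delta} C_{s,p}(\{M\chi_E > 2^{-k}\}),$$
which telescopes to $C_{s,p}(E)$ provided $\delta$ is chosen in a suitable subinterval of $(0,1)$. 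This Maz'ya-Verbitsky capacitary machinery, faithfully adapted to the nonisotropic setting of $\bB$, is the delicate core of the proof; once it is in hand, the capacitary inequality for $|h|^p d\sigma$ follows and, via the same characterization in reverse, so does the trace property.
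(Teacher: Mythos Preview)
The paper does not prove this proposition: immediately before the statement it is attributed to \cite{mazyaverbitsky} as a nonisotropic transplant, and no argument is supplied. So your sketch is being compared against the Maz'ya--Verbitsky machinery the paper invokes, not against anything written in the paper.

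Your overall plan---pass to the capacitary test $\int_E|h|^p\,d\sigma\lesssim C_{s,p}(E)$ and feed the hypothesis an $A_1$ weight that is $\gtrsim 1$ on $E$---is correct, but the particular weight $(M\chi_E)^\delta$ together with the bound $C_{s,p}(\{M\chi_E>2^{-k}\})\lesssim 2^{kp}C_{s,p}(E)$ does not close: your dyadic sum becomes $\sum_{k\ge 0}2^{k(p-\delta)}C_{s,p}(E)$, which diverges for every $\delta\in(0,1)$ since $p>1$. No ``suitable subinterval of $(0,1)$'' rescues this, and the parenthetical ``or a suitable refinement'' is exactly where the whole difficulty lives.

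The $A_1$ weight that makes the argument go through is the nonlinear capacitary potential itself. For open $G\supset E$ with extremal measure $\mu_G$, put $w=\mathcal{I}_s\bigl((\mathcal{I}_s\mu_G)^{p'-1}\bigr)$. Then Proposition~\ref{prop:extremalcapacity}(vi) gives $w^\delta\in A_1$ for some $\delta>1$; parts (iii)--(iv) give $w\gtrsim 1$ on $G$ and $w\lesssim 1$ on $\bB$; and part (v) gives $C_{s,p}(\{w>t\})\lesssim t^{-\epsilon}C_{s,p}(G)$ with $\epsilon=\min(1,p-1)\le 1<\delta$. The layer-cake integral $\int|g|^p w^\delta\,d\sigma\approx\int_0^{C} t^{\delta-1}\bigl(\int_{\{w>t\}}|g|^p\,d\sigma\bigr)dt$ then converges and is $\lesssim C_{s,p}(G)$, whence $\int_E|h|^p\,d\sigma\lesssim\int|h|^p w^\delta\le C\int|g|^p w^\delta\lesssim C_{s,p}(G)$; let $G\downarrow E$. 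This is precisely the device the paper itself uses later, in the $p\ge 2$ case of the proof of Theorem~\ref{thm:multpot}.
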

As a  consequence of the above proposition and the fact that if $g\in L^p$ and $|g|^pd\sigma$ is a trace measure for ${\mathcal I}_s(L^p)$, we also have that the measure $|{\mathcal C}(g)|^pd\sigma$, where ${\mathcal C}$ is the Cauchy transform, is a trace measure for ${\mathcal I}_s(L^p)$, and, in particular, it is a Carleson measure for $H_s^p$, we deduce the following corollary.
\begin{cor} 
Let $1<p<\infty$ and $0<n-sp<1$. We then have that $f\in Mult(H_s^p)$ if and only if $f\in H^\infty$ and there exists $g\in L^p$ such that $f={\mathcal C}_s(g)$ and
the measure $d\mu=|g|^pd\sigma $ is a trace measure for $\mathcal{I}_s[L^p]$.
\end{cor}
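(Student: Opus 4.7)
The plan is to combine Theorem~\ref{thm:bijectMH} with the Carleson-versus-trace equivalence of Theorem~\ref{thm:n-sp<1}, handling the passage from $h\in H^p$ to $g\in L^p$ via the Cauchy--Szeg\H{o} projection.

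For the forward direction I would start with $f\in Mult(H_s^p)$ and apply Theorem~\ref{thm:bijectMH}\eqref{item:bijectMH2} to obtain $h\in H^p$ with $f=\mathcal{C}_s(h)$ and such that $|h|^pd\sigma$ is a Carleson measure for $H_s^p$ on $\bB$. Under the standing assumption $0<n-sp<1$, the equivalence of conditions (ii) and (iii) in Theorem~\ref{thm:n-sp<1} immediately upgrades this Carleson condition to the statement that $|h|^pd\sigma$ is a trace measure for $\mathcal{I}_s[L^p]$. Setting $g:=h\in H^p\subset L^p$ produces the required representation.

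For the reverse direction, I assume $f\in H^\infty$, $g\in L^p$, $f=\mathcal{C}_s(g)$, and $|g|^pd\sigma$ is a trace measure for $\mathcal{I}_s[L^p]$, and I would exhibit a suitable $h\in H^p$. The candidate is $h:=\mathcal{C}(g)\in H^p$, where $\mathcal{C}$ is the Cauchy--Szeg\H{o} projection. Two facts need checking. First, $\mathcal{C}_s(g)=\mathcal{C}_s(\mathcal{C}(g))$: expanding $(1-z\bar\zeta)^{-(n-s)}$ in powers of $z\bar\zeta$ shows that, for each fixed $z\in\B$, the kernel is, as a function of $\zeta\in\bB$, a sum of conjugate monomials $\bar\zeta^\alpha$, so its $L^p$-pairing with $g$ picks up only the holomorphic part of $g$. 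Second, $|\mathcal{C}(g)|^pd\sigma$ remains a trace measure for $\mathcal{I}_s[L^p]$: the Cauchy--Szeg\H{o} projection is a Calder\'on--Zygmund singular integral on $\bB$, hence bounded on $L^p(w)$ for every $w\in A_p$ and in particular for every $w\in A_1\subset A_p$, which supplies exactly the weighted inequality \eqref{eqn:weightestimate} needed to apply Proposition~\ref{prop:tracemeasure3}. Theorem~\ref{thm:n-sp<1} then converts this trace condition back into the statement that $|h|^pd\sigma$ is a Carleson measure for $H_s^p$, and Theorem~\ref{thm:bijectMH}\eqref{item:bijectMH2} yields $f\in Mult(H_s^p)$.

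The only delicate point is the projection identity $\mathcal{C}_s(g)=\mathcal{C}_s(\mathcal{C}(g))$ for $L^p$ (rather than $L^2$) data. I would handle this by noting that for each fixed $z\in\B$ the kernel is bounded on $\bB$, so both sides of the identity depend continuously on $g$ in the $L^p$-norm; the identity is immediate for $L^2$-functions from the orthogonality of $H^2$ and its complement, and extends by density.
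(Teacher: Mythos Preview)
Your proof is correct and follows essentially the same route as the paper: the paper also passes from $g\in L^p$ to $h=\mathcal{C}(g)\in H^p$ using Proposition~\ref{prop:tracemeasure3} together with the weighted $L^p(w)$-boundedness of the Cauchy transform for $w\in A_1$, and then invokes the equivalences of Theorem~\ref{thm:n-sp<1} and Theorem~\ref{thm:bijectMH}. You are in fact more careful than the paper in spelling out the forward direction and in justifying the identity $\mathcal{C}_s(g)=\mathcal{C}_s(\mathcal{C}(g))$, which the paper leaves implicit.
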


\section{Multipliers and holomorphic potentials}\label{sec:multholpot}

Consider, in $\R^n$, the space of Bessel potentials of functions in $L^p(\R^n)$, $G_s[L^p(\R^n)]$, $1<p<\infty$ and $0<sp\leq n$. It is proved in \cite{boe}, that if $\mu$ is a positive measure on $\R^n$ and if $G_s$ is a Bessel potential, and   the nonlinear potential of $\nu$ defined by $V_{s,\,p}[\mu]= G_s*(G_s*\mu)^{p'-1}$ is bounded, then $V_{s,\,p}[\mu]$ is a multiplier on $G_s[L^p]$.

In this section we will study the analogous problem for nonisotropic holomorphic potentials. 
We recall some definitions and results.

If $\mu$ is a positive Borel measure on $\bB$,   $1<p<\infty$, $0<s<n$ and $w$ is an $A_p$-weight, the $(s,p)$-energy of $\mu$ with weight $w$ (see \cite{adams}), is defined by
 \begin{equation}\label{weightedenergy}
 {\mathcal E}_{s,p,w}(\mu)=\int_{\bB} (\mathcal{I}_s(\mu)(\zeta))^{p'}w(\zeta)^{-(p'-1)}d\sigma(\zeta).
 \end{equation} 

 It is also introduced in \cite{adams} a weighted Wolff-type potential of a measure $\mu$ as
 \begin{equation}\label{weightedwolffpotential}
 {\mathcal W}_{s,p,w}(\mu)(\zeta)= \int_0^1\left( \frac{\mu(B(\zeta,1-r))}{(1-r)^{n- sp}}\right)^{p'-1} {\int\!\!\!\!\!\setminus}_{B(\zeta, 1-r)}w^{-(p'-1)}(\eta)d\sigma(\eta)\frac{dr}{1-r}.
 \end{equation}
 Here
 $${\int\!\!\!\!\!\setminus}_{B } \theta:= \frac1{|B |}\int_{B}\theta.$$
 
 We have the pointwise estimate 
 \begin{equation}\label{eqn:realpot}{\mathcal W}_{s,p,w}(\mu)(\zeta)\leq C\mathcal{I}_s[(w^{-1}  \mathcal{I}_s(\mu))^{p'-1}](\zeta).\end{equation} The weighted version of Wolff's theorem gives that the converse is true, provided we integrate with respect to $\mu$, that is,
  if $w$ in an $A_p$-weight, the following weighted Wolff-type theorem holds:
 \begin{equation}\label{weightedwolfftheorem}
 {\mathcal E}_{s,p,w}(\mu)= \int_{\bB}\mathcal{I}_s[(w^{-1}  \mathcal{I}_s(\mu))^{p'-1}] (\zeta) d\mu(\zeta)\simeq \int_{\bB} {\mathcal W}_{s,p,w}(\mu)(\zeta) d\mu(\zeta).
 \end{equation} 
  When $w\equiv1$ we will just write ${\mathcal W}_{s,p}(\mu)$ and ${\mathcal E}_{s,p}(\mu)$.

We also recall an extremal theorem for the nonisotropic Riesz capacities. See, for instance, the books of \cite{adamshedberg} and \cite{mazyashaposnikova} and the paper \cite{hedbergwolff}.

\begin{prop}\label{prop:extremalcapacity} Let $1<p<\infty$, $0<s<n/p$ and $G\subset{\bB}$ be an open set. There exists a positive capacitary measure $\mu_G$ such that
 \begin{enumerate}
 \item[(i)] ${\rm supp}\,\, \mu_G\subset \overline{G}$.
 
 \item[(ii)] $\mu_G(\overline{G})=C_{s,p} (G)={\mathcal E}_{s,p} (\mu_G)$.
 
 \item[(iii)] ${\mathcal W}_{s,p} (\mu_G)(\zeta)\geq C$, for every $\zeta\in G$.
 
 \item[(iv)] $\mathcal{I}_s((\mathcal{I}_s (\mu_G))^{p'-1})(\zeta)\leq C$, for any $\zeta\in \bB$.
 \item[(v)] If $\epsilon= \min(1,p-1)$, $$C_{s,p}(\{\zeta\in\bB;\, \mathcal{I}_s(\mathcal{I}_s(\mu_G)^{p'-1})(\zeta)\geq t\}) \lesssim \frac{C_{s,p}(G)}{t^\epsilon}.$$
 \item[(vi)] If $1<p\leq 2-\frac{s}{n}$ and $1<\delta<\frac{n}{n-s}$ or $2-\frac{s}{n}<p$ and $1<\delta<\frac{(p-1)n}{n-sp}$, then the weight $w^\delta=(\mathcal{I}_s(\mathcal{I}_s(\mu_G)^{p'-1}))^\delta$ is in $A_1$. Moreover, for any $\eta \in \bB$ and $y>0$, \begin{equation}\label{eqn:a1}\frac{w^\delta(B(\eta,y))}{y^n}\lesssim w^\delta(\eta).\end{equation}
 \end{enumerate}
\end{prop}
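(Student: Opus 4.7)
The strategy is to verify (i)--(vi) by adapting the nonlinear potential theory of Adams--Hedberg and Hedberg--Wolff to the nonisotropic setting on $\bB$. First, I would reduce to the compact case by exhausting $G=\bigcup_k K_k$ with $K_k$ compact and $C_{s,p}(K_k)\uparrow C_{s,p}(G)$, constructing capacitary measures $\mu_{K_k}$ compactly and then passing to a weak-$*$ limit. The uniform mass bound $\mu_{K_k}(K_k)\le C_{s,p}(G)$ coming from (ii) ensures the limit $\mu_G$ exists, and properties (i)--(v) transfer to the limit by lower semicontinuity of the energy and the Riesz potential under weak-$*$ convergence.

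For compact $K$, I would use the convex dual characterization
\[
C_{s,p}(K)^{1/p}=\sup\bigl\{\mu(K)/{\mathcal E}_{s,p}(\mu)^{1/p'}:\ \mu\ge 0,\ \mathrm{supp}\,\mu\subset K\bigr\},
\]
which admits a unique normalized maximizer $\mu_K$ by strict convexity of the energy functional. The Euler--Lagrange/complementary-slackness conditions read $\mathcal{I}_s(\mathcal{I}_s(\mu_K)^{p'-1})\ge 1$ quasi-everywhere on $K$ together with $\mathcal{I}_s(\mathcal{I}_s(\mu_K)^{p'-1})\le 1$ on $\mathrm{supp}\,\mu_K$; the domination principle for the Riesz kernel then extends the upper bound to all of $\bB$, yielding (i), (ii) and (iv). Combining the q.e.\ lower bound on $K$ with the pointwise estimate \eqref{eqn:realpot} in its reverse form (Hedberg--Wolff, valid on $\mathrm{supp}\,\mu_K$) delivers (iii).

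For (v), when $p\ge 2$ the weak-type bound is immediate from the definition: if $E_t=\{\mathcal{I}_s(\mathcal{I}_s(\mu_G)^{p'-1})\ge t\}$, then $f=t^{-1}\mathcal{I}_s(\mu_G)^{p'-1}$ is admissible in the definition of $C_{s,p}(E_t)$, so $C_{s,p}(E_t)\le \|f\|_p^p=t^{-p}\,{\mathcal E}_{s,p}(\mu_G)=t^{-p}C_{s,p}(G)$. For $1<p<2$ the sharper exponent $\epsilon=p-1$ is extracted via a truncation/quasi-additivity argument exploiting the subadditivity of $x\mapsto x^{p-1}$ on $[0,\infty)$, applied dyadically to the level sets of $\mathcal{I}_s(\mu_G)$.

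Finally, (vi) requires showing $M(w^\delta)(\eta)\lesssim w^\delta(\eta)$ pointwise a.e. I would split $\mu_G$ into its restrictions to $B(\eta,2y)$ and $\bB\setminus B(\eta,2y)$: the tail part is controlled uniformly by (iv), while the local part is estimated by invoking (v) through a Kolmogorov-type inequality that converts the weak-type bound into an $L^\delta$-integrability estimate for $\delta$ strictly below the critical exponent $1/\epsilon$. The two prescribed upper bounds on $\delta$ correspond precisely to this critical exponent under the dichotomy of (v), which equals $n/(n-s)$ when the scaling is governed by the outer Riesz potential (the case $p\le 2-s/n$) and $(p-1)n/(n-sp)$ when it is governed by the inner one (the case $p>2-s/n$). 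I expect (vi) to be the main technical obstacle: while (i)--(v) are essentially standard, the dichotomy in (vi) together with the pointwise bound \eqref{eqn:a1} on small nonisotropic balls requires a careful Coifman--Rochberg-type analysis tracking the interplay between the geometry of $\bB$ and the two distinct scaling regimes.
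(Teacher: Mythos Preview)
The paper does not prove this proposition: it is stated as a recalled result from nonlinear potential theory, with pointers to \cite{adamshedberg}, \cite{mazyashaposnikova}, and \cite{hedbergwolff}, and is used as a black box thereafter. Your outline follows precisely the standard development in those references (dual extremal problem for compact sets, Euler--Lagrange conditions plus the domination principle for (i)--(iv), capacitary weak-type estimates for (v), a Coifman--Rochberg argument for (vi), then passage to open $G$ by exhaustion), so there is no meaningful divergence from what the authors have in mind.

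Two technical points in your sketch deserve a second look. First, the admissibility/Chebyshev argument you give for (v) when $p\ge 2$ produces the exponent $t^{-p}$, not the stated $t^{-1}$; the exponent $\epsilon=\min(1,p-1)$ is strictly sharper for small $t$ and requires a different capacitary weak-type inequality for the nonlinear potential, of the kind proved in Adams--Hedberg, \S 6.3, which uses more than the energy identity alone. Since (v) is exactly what feeds into the Kolmogorov step in your proof of (vi), getting the correct exponent here is not cosmetic. Second, for (iii) you appeal to a ``reverse form'' of \eqref{eqn:realpot}, but that pointwise reverse inequality is false in general; Wolff's theorem \eqref{weightedwolfftheorem} is an integrated statement. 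The lower bound on $\mathcal W_{s,p}(\mu_G)$ on $G$ has to be argued separately, and in the standard treatments it holds only quasi-everywhere on $G$ rather than literally at every point.
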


The methods in \cite{cascanteortegaverbitsky} can be adapted to the nonisotropic case to show the following theorem.

\begin{prop}\label{prop:tracemeasure1}

 Let $1<p<\infty$, $0<s<n/p$ and let $\mu$ be a  finite positive measure in $\bB$. Then the measure $d\mu_1=\frac{d\mu}{{\mathcal W}_{s,p}(\mu)^{p-1}}$ is a trace measure for $\mathcal{I}_s[L^p]$.
 \end{prop}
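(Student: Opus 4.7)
The plan is to adapt to the nonisotropic geometry on $\bB$ the Euclidean argument of \cite{cascanteortegaverbitsky}. By the duality between $\mathcal{I}_s:L^p(d\sigma)\to L^p(d\mu_1)$ and $\mathcal{I}_s:L^{p'}(d\mu_1)\to L^{p'}(d\sigma)$, combined with the Kerman-Sawyer characterization---which in the nonisotropic setting is equivalent, via Wolff's theorem \eqref{weightedwolfftheorem}, to the testing condition
$$\int_B {\mathcal W}_{s,p}(\chi_B\,d\mu_1)\,d\mu_1\le C\,\mu_1(B)$$
for all nonisotropic balls $B\subset\bB$---the problem reduces to this ball-by-ball estimate.

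The key pointwise input is a lower bound for ${\mathcal W}_{s,p}(\mu)$ on balls, which comes for free from the normalization defining $\mu_1$. Fix $\zeta_0\in\bB$ and $\rho>0$. For every $\eta\in B(\zeta_0,\rho)$ the quasi-triangle inequality for the nonisotropic pseudo-metric gives $B(\eta,C_0\rho)\supset B(\zeta_0,\rho)$, so restricting the Wolff integral to the scales $t\sim\rho$ yields
$${\mathcal W}_{s,p}(\mu)(\eta)\gtrsim\left(\frac{\mu(B(\zeta_0,\rho))}{\rho^{n-sp}}\right)^{p'-1}.$$
Since $(p'-1)(p-1)=1$, this produces the Ahlfors-type growth bound
$$\mu_1(B(\zeta_0,\rho))=\int_{B(\zeta_0,\rho)}\frac{d\mu}{{\mathcal W}_{s,p}(\mu)^{p-1}}\lesssim\rho^{n-sp},$$
which is the starting point for the testing estimate. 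A refinement of the same argument, now keeping the full tail $\int_{\rho}^1(\mu(B(\zeta_0,t))/t^{n-sp})^{p'-1}\,dt/t$ as the lower bound for ${\mathcal W}_{s,p}(\mu)$ on $B(\zeta_0,\rho)$, produces the scale-refined estimate needed to control the Wolff integrand at small scales.

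With these pointwise bounds in hand, the Kerman-Sawyer testing integral is handled by Fubini, followed by a dyadic decomposition of the inner Wolff integral over scales; the Ahlfors bound controls the large-scale contribution while the scale-refined estimate absorbs the contribution from small scales that would otherwise diverge. The main technical obstacle lies in this scale-by-scale summation in the nonisotropic pseudo-metric. All the required ingredients---quasi-triangle inequality, doubling, Whitney-type coverings and the nonisotropic Wolff's theorem \eqref{weightedwolfftheorem}---are already available in this setting, so the obstacle is essentially bookkeeping to ensure the constants remain uniform in $B$.
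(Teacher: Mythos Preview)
Your proposal is correct and is exactly what the paper does: it gives no proof of this proposition and simply states that ``the methods in \cite{cascanteortegaverbitsky} can be adapted to the nonisotropic case,'' which is precisely the adaptation you outline (Kerman--Sawyer testing via the Wolff-type reformulation, the pointwise lower bound for ${\mathcal W}_{s,p}(\mu)$ on balls yielding the Ahlfors growth $\mu_1(B)\lesssim r^{n-sp}$, and the scale-by-scale summation). There is nothing to add.
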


We also recall the nonisotropic versions of Theorem  2.1  and Lemma 3.1 in \cite{mazyaverbitsky}.

\begin{prop}\label{prop:tracemeasure2}
Let $1<p<\infty$, $0<s<n/p$ and let $\mu$ be a positive finite Borel measure on $\bB$. Then the following assertions are equivalent:
\begin{enumerate}
\item The measure $\mu$ is a trace measure for $\mathcal{I}_s[L^p]$.
\item The measure $(\mathcal{I}_s(\mu))^{p'}d\sigma$ is a trace measure for $\mathcal{I}_s[L^p]$.
\end{enumerate} 
\end{prop}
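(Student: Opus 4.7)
The strategy is to transfer both conditions to equivalent estimates in terms of the nonisotropic Riesz capacity $C_{s,p}$ and then to establish their equivalence via Wolff's theorem \eqref{weightedwolfftheorem} and the extremal capacitary measures of Proposition \ref{prop:extremalcapacity}. The whole scheme parallels Lemma~3.1 of \cite{mazyaverbitsky}; the point is that every tool used in the Euclidean proof has already been made available on $\bB$ in the preceding sections.

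First, I would note that the equivalence (i)$\Leftrightarrow$(iv) of Theorem \ref{thm:n-sp<1} extends from the special form $|g|^p d\sigma$ to a general finite positive Borel measure $\mu$ on $\bB$ — this is classical Maz'ya theory and does not use the holomorphy of $g$. Consequently, condition (i) is equivalent to $\mu(E)\lesssim C_{s,p}(E)$ for every compact $E\subset\bB$, and condition (ii) is equivalent to the analogous bound for $d\tau=(\mathcal{I}_s\mu)^{p'}d\sigma$, namely $\int_E (\mathcal{I}_s\mu)^{p'}d\sigma\lesssim C_{s,p}(E)$. With this reformulation the proposition becomes a purely capacitary equivalence.

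For the direction (ii)$\Rightarrow$(i), given a compact $E$ I would pick an open neighborhood $G\supset E$ and use the capacitary extremal measure $\mu_G$ supplied by Proposition \ref{prop:extremalcapacity}. Property (iii) gives $\mathcal{W}_{s,p}(\mu_G)\geq c$ on $G$, which combined with \eqref{eqn:realpot} yields $\mathcal{I}_s[(\mathcal{I}_s\mu_G)^{p'-1}]\gtrsim 1$ on $G$; hence, by Fubini,
\[
\mu(E)\leq\mu(G)\lesssim\int_G\mathcal{I}_s[(\mathcal{I}_s\mu_G)^{p'-1}]\,d\mu=\int\mathcal{I}_s(\chi_G\,d\mu)\cdot(\mathcal{I}_s\mu_G)^{p'-1}\,d\sigma.
\]
An application of Hölder's inequality with exponents $(p',p)$, together with hypothesis (ii) used to control $\int_G(\mathcal{I}_s\mu)^{p'}d\sigma$ by $C_{s,p}(G)$ and the identity $\mathcal{E}_{s,p}(\mu_G)=C_{s,p}(G)$, produces the required estimate after taking the infimum over $G\supset E$. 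Conversely, for (i)$\Rightarrow$(ii), the dual formulation of (i) is $\int(\mathcal{I}_s(fd\mu))^{p'}d\sigma\lesssim\int f^{p'}d\mu$; testing this with the potential $f=\mathcal{I}_s(\chi_E d\sigma)^{p'-1}$, using Fubini to turn the left-hand side into $\int(\mathcal{I}_s\mu)(\mathcal{I}_s\chi_E)^{p'-1}\cdots$ type expressions, and then invoking the Hedberg–Wolff inequality \eqref{weightedwolfftheorem} to compare with $\int_E(\mathcal{I}_s\mu)^{p'}d\sigma$, leads to the capacity bound required by (ii).

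The principal technical obstacle will be the self-improvement step in (ii)$\Rightarrow$(i): a naive single application of Hölder only provides an estimate of the form $\mu(E)\lesssim C_{s,p}(E)^{\alpha}$ with $\alpha<1$, and one must iterate (or pass to a Whitney dyadic model of $\bB$) to reach the sharp exponent $\alpha=1$. This is precisely the bootstrap argument of \cite{mazyaverbitsky}, and it transfers to the sphere because the nonisotropic balls $B(\zeta,r)$ are doubling and satisfy a quasi-metric structure that supports Whitney decompositions. All other ingredients — the Wolff-type theorem \eqref{weightedwolfftheorem}, the pointwise comparison \eqref{eqn:realpot}, the properties of $\mu_G$ in Proposition \ref{prop:extremalcapacity}, and the trace–capacity equivalence of Theorem \ref{thm:n-sp<1} — are already in place, so the adaptation is essentially mechanical.
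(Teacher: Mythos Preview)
The paper does not prove this proposition: it is introduced with ``We also recall the nonisotropic versions of Theorem~2.1 and Lemma~3.1 in \cite{mazyaverbitsky}'' and then simply stated, with no argument given. So there is nothing in the paper to compare your proof against beyond the citation itself. Your plan is precisely to carry out the nonisotropic adaptation of that Maz'ya--Verbitsky argument, which is exactly what the paper is invoking; in that sense you are aligned with the paper, only more explicit.

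That said, since you do supply an outline, a couple of remarks are in order. Your treatment of (ii)$\Rightarrow$(i) is honest about the bootstrap/iteration obstacle, and that is indeed where the real work lies. Your sketch of (i)$\Rightarrow$(ii), however, is looser than the other direction: the ``dual formulation'' you quote, $\int(\mathcal{I}_s(f\,d\mu))^{p'}d\sigma\lesssim\int f^{p'}d\mu$, is the correct adjoint statement, but the test function $f=(\mathcal{I}_s\chi_E)^{p'-1}$ and the subsequent ``Fubini $+$ Hedberg--Wolff'' step are not spelled out enough to see that they actually close; in the Maz'ya--Verbitsky paper this direction also requires a nontrivial capacitary strong-type argument (their Theorem~2.1), not just a single duality-plus-H\"older move. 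None of this is wrong, but if you were to write the proof out in full you would need to make this direction as careful as the other one.
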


\begin{thm}\label{thm:multex}
Let $1<p<\infty$, $0<s<n$ and $\mu$ a finite positive Borel measure on $\bB$. Assume that $\mathcal{I}_s(\mathcal{I}_s(\mu)^{p'-1})$ is bounded on $\bB$. Then ${\mathcal C}_s(\mathcal{I}_s(\mu)^{p'-1})$ is a multiplier for $H_s^p$.
\end{thm}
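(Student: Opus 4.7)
My plan is to verify the characterization in Theorem \ref{thm:bijectMH}\eqref{item:bijectMH2}. Set $h := \mathcal{I}_s(\mu)^{p'-1}$, $g := \mathcal{C}_s(h)$, and let $\tilde h := \mathcal{C} h$ denote the boundary Cauchy projection of $h$. Since $\mathcal{C}_s$ annihilates antiholomorphic components of its boundary density, one has $g = \mathcal{C}_s(\tilde h)$, so it suffices to establish (a) $g \in H^\infty$, (b) $\tilde h \in H^p$, and (c) $|\tilde h|^p\, d\sigma$ is a Carleson measure for $H_s^p$.

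For (a), I would start from the pointwise bound, for $z\in\B$,
$$
|g(z)| \le \int_\bB \frac{h(\zeta)}{|1 - z\overline\zeta|^{n-s}}\, d\sigma(\zeta),
$$
and split the sphere into the Koranyi cap around $z/|z|$ of radius $1-|z|$ and its complement; on the complement the kernel is comparable to $|1-(z/|z|)\overline\zeta|^{-(n-s)}$, giving the bound $\mathcal{I}_s(h)(z/|z|)$, and on the cap a volume estimate dominates the contribution by a fractional maximal function $M_s h(z/|z|) \lesssim \mathcal{I}_s(h)(z/|z|)$. Hence $\|g\|_{H^\infty} \lesssim \|\mathcal{I}_s(\mathcal{I}_s(\mu)^{p'-1})\|_{L^\infty(\bB)} < \infty$ by hypothesis.

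For (b) and (c), the hypothesis is precisely the Kerman--Sawyer/Maz'ya--Verbitsky testing condition for $\mu$ to be a trace measure for $\mathcal{I}_s[L^p]$. The Wolff-type identity \eqref{weightedwolfftheorem} then yields
$$
\int_\bB h^p\, d\sigma \;=\; \mathcal{E}_{s,p}(\mu) \;=\; \int_\bB \mathcal{I}_s\bigl(\mathcal{I}_s(\mu)^{p'-1}\bigr)\, d\mu \;\le\; \|\mathcal{I}_s(\mathcal{I}_s(\mu)^{p'-1})\|_{L^\infty(\bB)}\,\mu(\bB) \;<\; \infty,
$$
so $h \in L^p(\bB)$ and the classical $L^p$-boundedness of $\mathcal{C}$ places $\tilde h$ in $H^p$. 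Proposition \ref{prop:tracemeasure2} transports the trace-measure property from $\mu$ to $\mathcal{I}_s(\mu)^{p'}\,d\sigma = h^p\,d\sigma$, and I would then apply Proposition \ref{prop:tracemeasure3}: the required weighted inequality \eqref{eqn:weightestimate} reduces to the $L^p(w)$-boundedness of the Cauchy projection for every $w \in A_1 \subset A_p$, a standard Muckenhoupt--Wheeden result. This yields that $|\tilde h|^p\,d\sigma$ is again a trace measure for $\mathcal{I}_s[L^p]$; writing any $f \in H_s^p$ as $f = \mathcal{C}_s(\psi)$ with $\psi \in H^p$ and using $M_{rad}[f] \lesssim \mathcal{I}_s(|\psi|)$ upgrades this to the Carleson condition demanded by Theorem \ref{thm:bijectMH}\eqref{item:bijectMH2}, finishing the proof.

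The main obstacle is the bridge in step~(c): while $h = \mathcal{I}_s(\mu)^{p'-1}$ is only a real boundary function, Theorem \ref{thm:bijectMH}\eqref{item:bijectMH2} requires a \emph{holomorphic} density $\tilde h \in H^p$, so passing to $\tilde h = \mathcal{C}h$ is forced. This projection destroys any pointwise domination of $|\tilde h|$ by $h$, and the weighted transference encoded in Proposition \ref{prop:tracemeasure3}, combined with the $A_p$-boundedness of the Cauchy projection, is exactly the mechanism that keeps the trace/Carleson property alive after projecting.
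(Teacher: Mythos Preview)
Your argument is correct and follows essentially the same route as the paper's: reduce to Theorem~\ref{thm:bijectMH}\eqref{item:bijectMH2} (the paper phrases this via Lemma~\ref{prop:CsM}), pass from $h=\mathcal{I}_s(\mu)^{p'-1}$ to its Cauchy projection $\tilde h=\mathcal{C}h$ using $\mathcal{C}_s(h)=\mathcal{C}_s(\tilde h)$, show $h^p\,d\sigma$ is a trace measure for $\mathcal{I}_s[L^p]$ via Proposition~\ref{prop:tracemeasure2}, and transfer this to $|\tilde h|^p\,d\sigma$ via Proposition~\ref{prop:tracemeasure3} together with the $A_p$-weighted boundedness of the Cauchy projection. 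Your treatment of (a) and of the final passage from ``trace for $\mathcal{I}_s[L^p]$'' to ``Carleson for $H^p_s$'' is actually more explicit than the paper's.

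The one place where the paper is more concrete than your sketch is the claim that $\mu$ itself is a trace measure. You invoke ``the Kerman--Sawyer/Maz'ya--Verbitsky testing condition,'' but boundedness of $\mathcal{I}_s(\mathcal{I}_s(\mu)^{p'-1})$ is not literally the Kerman--Sawyer local testing condition, nor the Maz'ya--Verbitsky equivalence of Proposition~\ref{prop:tracemeasure2}. The paper instead argues directly: the pointwise bound \eqref{eqn:realpot} gives $\mathcal{W}_{s,p}(\mu)\le C\,\mathcal{I}_s(\mathcal{I}_s(\mu)^{p'-1})$, so the Wolff potential is bounded, and then one writes $d\mu=\mathcal{W}_{s,p}(\mu)^{p-1}\cdot\dfrac{d\mu}{\mathcal{W}_{s,p}(\mu)^{p-1}}$ and applies Proposition~\ref{prop:tracemeasure1}. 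This is a short self-contained substitute for the black-box citation and worth inserting.
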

\begin{proof}
 Let us first observe that the boundedness of the nonisotropic potential $\mathcal{I}_s(\mathcal{I}_s(\mu)^{p'-1})$ gives that the nonlinear potential ${\mathcal W}_{s,p}(\mu)$ is also bounded. Hence $\mu$ is a trace measure for $\mathcal{I}_s[L^p]$.
Indeed
\begin{equation}\label{eqn:multex}\begin{split}&
\int_{\bB} \mathcal{I}_s(f)^p(\zeta) d\mu(\zeta)= \int_{\bB} \mathcal{I}_s(f)^p(\zeta){\mathcal W}_{s,p}(\mu)^{p-1}(\zeta)\frac{d\mu(\zeta)}{{\mathcal W}_{s,p}(\mu)^{p-1}(\zeta)}\\&\lesssim \int_{\bB} \mathcal{I}_s(f)^p(\zeta)\frac{d\mu(\zeta)}{{\mathcal W}_{s,p}(\mu)^{p-1}(\zeta)}\lesssim \|f\|_{L^p}^p,
\end{split}\end{equation}
where in the last estimate we have used that by Proposition \ref{prop:tracemeasure1}, the measure $\displaystyle{\frac{d\mu}{{\mathcal W}_{s,p}(\mu)^{p-1}}}$ is a trace measure for $\mathcal{I}_s[L^p]$. 
By Proposition \ref{prop:tracemeasure2}, we also have that the measure $((\mathcal{I}_s(\mu)^{p'-1})^pd\sigma$ is also a trace measure for $\mathcal{I}_s[L^p]$.

Next, we have that for any $w\in A_1$
$$\int_{\bB} |{\mathcal C}(\mathcal{I}_s(\mu)^{p'-1})|^pwd\sigma \lesssim \int_{\bB} |(\mathcal{I}_s(\mu)^{p'-1})|^pwd\sigma.$$ Consequently, Proposition \ref{prop:tracemeasure3} gives that
$|{\mathcal C}(\mathcal{I}_s(\mu)^{p'-1})|^pd\sigma$ is also a trace measure for $\mathcal{I}_s[L^p]$. Of course, from the hypothesis, ${\mathcal C}_s(\mathcal{I}_s(\mu)^{p'-1})$ is bounded. The Proposition \ref{prop:CsM} gives then that
the holomorphic function
$\mathcal{C}_s(\mathcal{I}_s(\mu)^{p'-1})= \mathcal{C}_s({\mathcal C}(\mathcal{I}_s(\mu)^{p'-1}))$ is a multiplier for $H_s^p$.

\end{proof}  
When $0<n-2s<1$, we deduce that
\begin{cor}\label{prop:multiplierscas2}
Let $\mu$ be a positive finite Borel measure on $\bB$ and assume that $0<n-2s<1$. If ${\mathcal C}_s({\mathcal C}_s(\mu))$ is bounded, then it is a multiplier for $H_s^2$.
\end{cor}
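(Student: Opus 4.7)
Set $g := {\mathcal C}_s({\mathcal C}_s(\mu))$ and let $h$ denote the boundary values of the holomorphic function ${\mathcal C}_s(\mu)$, so that $g = {\mathcal C}_s(h)$. The hypothesis gives $g \in H^\infty$. Using Theorem \ref{thm:bijectMH} (and the observation made right after Theorem \ref{thm:n-sp<1} that under $0<n-sp<1$ one may take $h \in L^p(\bB)$ in condition \eqref{item:bijectMH2}), it suffices to show that $|h|^2 d\sigma = |{\mathcal C}_s(\mu)|^2 d\sigma$ is a Carleson measure on $\bB$ for $H_s^2$. The pointwise bound $|{\mathcal C}_s(\mu)| \le \mathcal{I}_s(\mu)$ reduces this to showing that $\mathcal{I}_s(\mu)^2 d\sigma$ is Carleson for $H_s^2$. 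By Theorem \ref{thm:n-sp<1} (available because $0<n-2s<1$) combined with Proposition \ref{prop:tracemeasure2}, this is equivalent to $\mu$ being a trace measure for $\mathcal{I}_s[L^2]$, which in turn follows from boundedness of ${\mathcal W}_{s,2}(\mu)$ via Proposition \ref{prop:tracemeasure1}, and by \eqref{eqn:realpot} from boundedness of $\mathcal{I}_s(\mathcal{I}_s(\mu))$. A standard Forelli-Rudin boundary estimate gives $\mathcal{I}_s(\mathcal{I}_s(\mu)) \lesssim \mathcal{I}_{2s}(\mu)$, so it is enough to bound $\mathcal{I}_{2s}(\mu)$.

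\medskip

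The core step is to transfer the assumed boundedness of the holomorphic iterate ${\mathcal C}_s({\mathcal C}_s(\mu))$ to the real potential $\mathcal{I}_{2s}(\mu)$. My approach is to combine Fubini's theorem with the classical reproducing identity
$$
\int_\bB \frac{d\sigma(\zeta)}{(1-z\bar\zeta)^{n-s}(1-\zeta\bar\eta)^{n-s}}
= \frac{F(z\bar\eta)}{(1-z\bar\eta)^{n-2s}},\qquad F(u) := {}_2F_1(s,s;n;u),
$$
to obtain the representation $g(z) = \int_\bB F(z\bar\eta)(1-z\bar\eta)^{-(n-2s)} d\mu(\eta)$. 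Since $0<n-2s<1$, the singular factor $(1-z\bar\eta)^{-(n-2s)}$ lies in the sector $\{|\arg w| \le (n-2s)\pi/2\}$ of aperture strictly less than $\pi$, so that in particular
$$
\operatorname{Re}\!\bigl((1-z\bar\eta)^{-(n-2s)}\bigr) \ge \cos((n-2s)\pi/2)\,|1-z\bar\eta|^{-(n-2s)}.
$$
Moreover $F$ has nonnegative Taylor coefficients, $F(0)=1$, and since $n-2s>0$ it extends continuously to $\overline{\mathbb{D}}$ with $F(1) = \Gamma(n)\Gamma(n-2s)/\Gamma(n-s)^2 > 0$. Combining these facts with a control of the phase of $F(u)$ should yield the sharp pointwise lower bound
$$
\operatorname{Re}\!\left(\frac{F(z\bar\eta)}{(1-z\bar\eta)^{n-2s}}\right) \gtrsim |1-z\bar\eta|^{-(n-2s)},\qquad z\in\B,\ \eta\in\bB,
$$
so that integrating against the positive measure $\mu$ gives $\mathcal{I}_{2s}(\mu)(z) \lesssim \operatorname{Re} g(z) \le \|g\|_\infty$, closing the chain above and producing $g \in Mult(H_s^2)$ via Theorem \ref{thm:bijectMH}.

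\medskip

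The main obstacle will be establishing this last lower bound on $\operatorname{Re}\bigl(F(u)(1-u)^{-(n-2s)}\bigr)$ uniformly for $u = z\bar\eta \in \overline{\mathbb{D}}\setminus\{1\}$. The restriction $0<n-2s<1$ is sharp here: it pins the argument of the singular factor $(1-u)^{-(n-2s)}$ inside a sector of opening strictly less than $\pi$, but controlling the phase of $F(u)$---so that the product remains in a sector in which the real part dominates the modulus---requires a careful direct analysis of the hypergeometric factor, especially when $u$ approaches boundary points of $\mathbb{D}$ far from $1$.
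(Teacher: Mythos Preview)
Your overall strategy is correct and matches the paper's: reduce to showing that $|{\mathcal C}_s(\mu)|^2\,d\sigma$ (hence $\mathcal{I}_s(\mu)^2\,d\sigma$) is a Carleson/trace measure for $H_s^2$, which via Propositions \ref{prop:tracemeasure1} and \ref{prop:tracemeasure2} follows from boundedness of $\mathcal{I}_{2s}(\mu)$, and then conclude by Lemma \ref{prop:CsM}/Theorem \ref{thm:bijectMH}. You also correctly locate the crux of the argument: passing from boundedness of the holomorphic iterate ${\mathcal C}_s({\mathcal C}_s(\mu))$ to boundedness of the real potential $\mathcal{I}_{2s}(\mu)$.

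Where your route diverges from the paper is in how this transfer is made, and your version is harder than necessary and possibly not true as stated. You aim for the uniform pointwise lower bound $\operatorname{Re}\bigl(F(u)(1-u)^{-(n-2s)}\bigr)\gtrsim |1-u|^{-(n-2s)}$ on all of $\overline{\mathbb D}\setminus\{1\}$; this demands control of the phase of $F$ globally, including near boundary points far from $1$, which is exactly the difficulty you flag and do not resolve. The paper sidesteps this completely. It uses (citing \cite{ortegafabrega}) the kernel decomposition
\[
{\mathcal C}_s({\mathcal C}_s(\mu))=C\,{\mathcal C}_{2s}(\mu)+{\mathcal L}(\mu),
\]
with ${\mathcal L}$ having a \emph{bounded} kernel; in your notation this is precisely the statement that $\bigl(F(u)-F(1)\bigr)(1-u)^{-(n-2s)}$ is bounded on $\overline{\mathbb D}$, which follows from the standard asymptotic ${}_2F_1(s,s;n;u)=F(1)+O\bigl((1-u)^{n-2s}\bigr)$ near $u=1$ (since $n-2s>0$). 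Because $\mu$ is finite, ${\mathcal L}(\mu)$ is bounded, so ${\mathcal C}_{2s}(\mu)$ is bounded. Now the sector argument you already wrote down applies cleanly to the \emph{single} kernel $(1-z\bar\eta)^{-(n-2s)}$ (no $F$ present), giving $\mathcal{I}_{2s}(\mu)\lesssim \operatorname{Re}\,{\mathcal C}_{2s}(\mu)\le |{\mathcal C}_{2s}(\mu)|$; this is the paper's ``$|{\mathcal C}_{2s}(\mu)|\approx \mathcal{I}_{2s}(\mu)$'' step, and here $0<n-2s<1$ is used exactly as you describe. The upshot: replace your global real-part lower bound on $F(u)(1-u)^{-(n-2s)}$ by the much softer observation that $F(u)-F(1)$ vanishes to order $n-2s$ at $u=1$, and your proof goes through.
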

\begin{proof}

It is proved in \cite{ortegafabrega}, that there exist a constant $C>0$ and a bounded kernel $L(z,\zeta)$,  such that
$${\mathcal C}_s({\mathcal C}_s(\mu))=C {\mathcal C}_{2s}(\mu)+{\mathcal L}(\mu),$$
where ${\mathcal L}(\mu)(z)=\int_{\bB} {\mathcal L}(z,\zeta)d\mu(\zeta).$
Consequently, ${\mathcal C}_{2s}(\mu)$ is also bounded. Next, the fact that $n-2s<1$ gives that $|{\mathcal C}_{2s}(\mu)|\approx {\mathcal I}_{2s}(\mu)\approx {\mathcal I}_s({\mathcal I}_s(\mu))$, and in particular, ${\mathcal I}_{s}(\mathcal{I}_s(\mu))$ is also bounded. The proof of the above Theorem, gives that $|\mathcal{I}_s(\mu)|^2$ is a trace measure for $\mathcal{I}_s[L^2]$. Consequently, $|{\mathcal C}_s(\mu)|^2$ is also a trace measure for $\mathcal{I}_s[L^2]$, and again applying Proposition \ref{prop:CsM}, we deduce that ${\mathcal C}_s({\mathcal C}_s(\mu))$ is a multiplier for $H_s^2$.
\end{proof}

For the case $0<n-sp<1$, $1<p<\infty$, it have been introduced by \cite{cohnverbitsky}, in the context of the study of exceptional sets for $H_s^p$, two families of holomorphic potentials that allow to extend some of the results for $p=2$ with ${\mathcal C}_s({\mathcal C}_s(\mu))$ (one more suitable for $p>2$ and the other one for $p\leq 2$). 

 Let $\mu$ be a finite positive Borel measure on ${\bB}$. For any $n-sp<\lambda<1$,  we set the following holomorphic functions on $\B$ defined by
 \begin{equation}\begin{split}\label{holompot1}
 &{\mathcal U}_{s,p,\lambda}(\mu)(z)\\
 &=\int_0^1 \int_{\bB}\left( \frac{\mu(B(\zeta, 1-r))}{(1-r)^{n-sp}}\right)^{p'-1} \frac{(1-r)^{\lambda-n}}{ (1-rz\overline{\zeta})^\lambda} d\sigma(\zeta) \frac{dr}{1-r},
 \end{split}\end{equation}
 and
 \begin{equation}\begin{split}\label{holompot2}
 &{\mathcal V}_{s,p,\lambda}(\mu)(z)\\
 &=\int_0^1 \left(\int_{\bB} \frac{(1-r)^{\lambda+sp-n}}{(1-rz\overline{\zeta})^\lambda}  d\mu(\zeta) \right)^{p'-1}  \frac{dr}{1-r}.
 \end{split}\end{equation}

It is   proved in \cite{cohnverbitsky} the following proposition:

 \begin{prop}\label{prop:pgreaterthan2}
  Let $1< p<\infty$, $0<s$ and $\lambda>0$ such that $0<n-sp<\lambda <1$. We then have:
  \begin{enumerate}
   \item If $1<p<2$   there exist $C_1,C_2>0$   such that for any    finite positive Borel measure $\mu$ on $\bB$ the following assertions hold:
  \begin{enumerate}
  \item[(a)] For any $\eta\in \bB$,
  $$\liminf_{\rho\rightarrow 1} {\rm Re}\,\, {\mathcal U}_{s,p,\lambda}(\mu)(\rho\eta)\geq C_1 {\mathcal W}_{s,p}(\mu)(\eta).$$
  \item[(b)] $|| {\mathcal U}_{s,p,\lambda}(\mu)||_{H_s^p }^p \leq C_2{\mathcal E}_{s,p} (\mu)$.
  \end{enumerate}
 
  \item If $p\geq 2$,  there exist $C_1,C_2>0$ such that for any  finite positive Borel measure $\mu$ on $ \bB$ the following assertions hold: 
  \begin{enumerate}
  \item[a)] For any $\eta\in \bB$,
  $$\liminf_{\rho\rightarrow 1} {\rm Re}\,\, {\mathcal V}_{s,p,\lambda}(\mu)(\rho\eta)\geq C_1 {\mathcal W}_{s,p} (\mu)(\eta).$$
  
  \item[b)] $|| {\mathcal V}_{s,p,\lambda}(\mu)||_{H_s^p }^p \leq C_2{\mathcal E}_{s,p} (\mu)$.
  \end{enumerate}
  \end{enumerate}
  \end{prop}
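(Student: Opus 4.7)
The plan is to handle the four estimates (1)(a), (1)(b), (2)(a), (2)(b) in two parallel groups. The lower bounds (1)(a) and (2)(a) are proved by a localization argument exploiting the positivity of $\text{Re}\,(1-w)^{-\lambda}$ for $\lambda<1$, while the upper bounds (1)(b) and (2)(b) are proved by Minkowski's inequality for integrals combined with the weighted Wolff identity \eqref{weightedwolfftheorem}.

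For the lower bounds, fix $\eta\in\bB$ and take $\rho<1$ close to 1. For $\zeta$ in the nonisotropic ball $B(\eta,\delta(1-r))$ with small $\delta>0$, elementary geometry shows that $w:=r\rho\eta\overline{\zeta}$ satisfies both $|1-w|\lesssim 1-r$ and $|\arg(1-w)|$ arbitrarily small, uniformly in $r$ and $\rho$, once $\delta$ is chosen small enough. Combined with $\lambda<1$ this yields
\[
\text{Re}\,(1-w)^{-\lambda}=\frac{\cos(\lambda \arg(1-w))}{|1-w|^{\lambda}}\gtrsim (1-r)^{-\lambda}.
\]
For (1)(a), restrict the inner $\zeta$-integral in the definition of $\mathcal{U}_{s,p,\lambda}(\mu)(\rho\eta)$ to $B(\eta,\delta(1-r))$, use $\mu(B(\zeta,1-r))\ge\mu(B(\eta,c(1-r)))$, and note $\sigma(B(\eta,\delta(1-r)))\approx(1-r)^n$; the powers $(1-r)^{\lambda-n}$, $(1-r)^{-\lambda}$ and $(1-r)^n$ combine so as to reproduce precisely the integrand of $\mathcal{W}_{s,p}(\mu)(\eta)$, and Fatou's lemma gives the claim. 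For (2)(a), argue analogously on the inner integral in $\mathcal{V}_{s,p,\lambda}$ \emph{before} raising to the $(p'-1)$-th power; here one also needs that the inner complex integral has argument bounded away from $\pm\pi/(2(p'-1))$, which follows once more from $\text{Re}(1-rz\overline{\zeta})>0$ for $z\in\B$, $\zeta\in\bB$ together with $\lambda<1$.

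For the upper bounds, use the tent-space characterization of the Hardy--Sobolev norm: for a positive integer $k>s$,
\[
\|f\|_{H_s^p}^p\approx \Bigl\|(1-|z|^2)^{k-s}(1+R)^k f(z)\Bigr\|_{T^{p,2}}^p.
\]
Differentiating under the integrals defining the potentials produces the same expressions with $(1-rz\overline{\zeta})^{-\lambda}$ replaced by $(1-rz\overline{\zeta})^{-\lambda-k}$ up to constants, which is bounded in absolute value by $|1-rz\overline{\zeta}|^{-\lambda-k}$. Apply Minkowski's integral inequality in the outer $r$-variable and estimate each resulting piece using Lemma \ref{lem:estP}. The splitting between $p<2$ and $p\ge 2$ enters through Jensen's inequality: for $\mathcal{U}$ with $p<2$ we have $p'-1>1$ and Jensen applied to the $\sigma$-integral inside yields the desired bound, while for $\mathcal{V}$ with $p\ge 2$ we have $p'-1\le 1$ and Jensen goes the other way, now on the $\mu$-integral inside. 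In both cases one reaches
\[
\|\mathcal{U}_{s,p,\lambda}(\mu)\|_{H_s^p}^p,\ \|\mathcal{V}_{s,p,\lambda}(\mu)\|_{H_s^p}^p\lesssim \int_{\bB}\mathcal{W}_{s,p}(\mu)(\eta)\,d\mu(\eta),
\]
and the Wolff identity \eqref{weightedwolfftheorem} with $w\equiv 1$ identifies the right-hand side with $\mathcal{E}_{s,p}(\mu)$.

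The main obstacle is the bookkeeping of exponents of $1-r$ and the correct application of Jensen in the upper bound: one must not lose any power of $1-r$ when exchanging $\int_{\bB}$ with $(\cdot)^{p'-1}$, and the estimates from Lemma \ref{lem:estP} must match the scale imposed by $\mathcal{W}_{s,p}$. The hypothesis $n-sp<\lambda<1$ is essential throughout, with $\lambda<1$ ensuring positivity of the real part in the lower bound, and $\lambda>n-sp$ ensuring convergence of the $r$-integral at $r=1$ in the upper bound.
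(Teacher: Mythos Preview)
The paper does not give its own proof of this proposition; it is quoted verbatim from \cite{cohnverbitsky} (see the sentence immediately preceding the statement). So there is no in-paper argument to compare against, only the cited source and the related computations that the paper carries out later in the proof of Theorem~\ref{thm:multpot}.

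Your treatment of the lower bounds (1)(a) and (2)(a) is essentially the Cohn--Verbitsky argument and is correct in outline: since $\lambda<1$ and $\text{Re}(1-rz\overline{\zeta})>0$ for $z\in\B$, $\zeta\in\bB$, every term $(1-rz\overline{\zeta})^{-\lambda}$ lies in the sector $\{|\arg|<\lambda\pi/2\}$, so the integrands have nonnegative real part throughout and one may keep only the contribution of $\zeta\in B(\eta,\delta(1-r))$ to recover the Wolff integrand. For (2)(a) you are right that the sector argument on the inner $\mu$-integral, combined with $p'-1\le 1$, keeps $\text{Re}\,W^{p'-1}\gtrsim |W|^{p'-1}$.

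Your sketch of the upper bounds (1)(b) and (2)(b) has a genuine gap. The appeal to ``Jensen applied to the $\sigma$-integral inside'' for $\mathcal{U}$ does not match the structure of the potential: in $\mathcal{U}_{s,p,\lambda}$ the exponent $p'-1$ is already \emph{inside} the $\sigma$-integral, applied to $\mu(B(\zeta,1-r))/(1-r)^{n-sp}$, so there is no inner $\sigma$-integral to which Jensen could be applied. Likewise, Lemma~\ref{lem:estP} estimates integrals over $\B$ against weights $(1-|w|^2)^{N-1}$ and does not apply directly to the sphere integrals here. The actual argument (visible in this paper in the proof of Theorem~\ref{thm:multpot}, which reproduces the relevant parts of \cite{cohnverbitsky}) is more specific: for $p<2$ one applies H\"older with the pair $(1/(p-1),\,1/(2-p))$ to split the integrand into a factor that Fubini converts into $\mathcal{I}_s(\mu)$ and a second factor handled via the weighted Wolff theorem~\eqref{weightedwolfftheorem}; for $p\ge 2$ one first dominates $|(1+R)^k\mathcal{V}|$ by the radial Wolff-type integral (see \eqref{est:cohnve}) and then uses a different H\"older splitting together with the doubling/$A_1$ properties of the capacitary weight. ``Minkowski in $r$ plus Jensen'' does not by itself produce $\int \mathcal{W}_{s,p}(\mu)\,d\mu$.
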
 
\quad

\subsection{Proof of Theorem \ref{thm:multpot}}
\begin{proof}
We first observe that by Proposition \ref{prop:pgreaterthan2}, the hypothesis give that the nonisotropic Wolff potential ${\mathcal W}_{s,p}(\mu)$ is bounded on $\bB$.

Arguing as in \eqref{eqn:multex} in the proof of Theorem \ref{thm:multex},  this boundedness together with Proposition \ref{prop:tracemeasure1} gives that $\mu$ is a trace measure for $\mathcal{I}_s[L^p]$. Next, Proposition \ref{prop:tracemeasure2} gives that the measure $(\mathcal{I}_s(\mu))^{p'}d\sigma$ is a trace measure for $\mathcal{I}_s[L^p]$.

 {\bf Case $p< 2$.}
 We want to check that for any $f\in H_s^p$,
$$\|{\mathcal U}_{s,p,\lambda}(\mu)f\|_ {H_s^p} \lesssim \|f\|_{H_s^p}.$$

Since $F_s^{p,1}\subset H_s^p$, (see Proposition \ref{prop:embedF}),  it is enough to show that if $k>s$,
\begin{equation}\label{eqn:fp1}
\left\|  \int_0^1|(1+R)^k {\mathcal U}_{s,p,\lambda}(\mu)(\rho\zeta)||f(\rho\zeta)|(1-\rho)^{k-s} \frac{d\rho}{1-\rho} \right\|_{L^p(\bB)}\lesssim \|f\|_{H_s^p}.
\end{equation}
But 
\begin{equation*}\begin{split}&
\left\| \int_0^1 |(1+R)^k {\mathcal U}_{s,p,\lambda}(\mu)(\rho\zeta)|\,|f(\rho\zeta)|(1-\rho)^{k-s} \frac{d\rho}{1-\rho} \right\|_{L^p(\bB)}^p\\&
\leq  \int_{\bB} (M_{rad}(f)(\zeta))^p \left(\int_0^1|(1+R)^k {\mathcal U}_{s,p,\lambda}(\mu)(\rho\zeta)|(1-\rho)^{k-s} \frac{d\rho}{1-\rho}\right)^pd\sigma(\zeta)   .
\end{split}\end{equation*}
Thus \eqref{eqn:fp1} will follow if we show that the measure 
\begin{equation}\label{eqn:tracemeasurefp1}\left(\int_0^1|(1+R)^k {\mathcal U}_{s,p,\lambda}(\mu)(\rho\zeta)|(1-\rho)^{k-s} \frac{d\rho}{1-\rho}\right)^pd\sigma(\zeta)
\end{equation}  
is a trace measure for $H_s^p$.

Since we have that the measure $(\mathcal{I}_s(\mu))^{p'}d\sigma$ is a trace measure for $\mathcal{I}_s[L^p]$, Proposition \ref{prop:tracemeasure3} gives that it is enough that we show that for any  weight $w$ in $A_1$,
\begin{equation}\begin{split}\label{eqn:estimateweight}
&\int_{\bB}\left(\int_0^1|(1+R)^k {\mathcal U}_{s,p,\lambda}(\mu)(\rho\zeta)|(1-\rho)^{k-s} \frac{d\rho}{1-\rho}\right)^pw(\zeta)d\sigma(\zeta) \\&\lesssim \int_{\bB}(\mathcal{I}_s(\mu)(\zeta))^{p'}w(\zeta)d\sigma(\zeta).
\end{split}\end{equation}

 We have
\begin{equation*}\begin{split}&
\int_0^1 (1-\rho)^{k-s} |(1+R)^k {\mathcal U}_{s,p,\lambda}(\rho\eta)|\frac{d\rho}{1-\rho} 
\\&\lesssim 
\int_0^1 (1-\rho)^{k-s}\int_0^1 \int_{\bB} \left(\frac{\mu(B(\zeta, 1-r))}{(1-r)^{n-sp}} \right)^{p'-1} \frac{(1-r)^{\lambda-n}}{|1-r\rho\eta\overline{\zeta}|^{\lambda+k}} \frac{d\sigma(\zeta)drd\rho}{(1-r)(1-\rho)}.
\end{split}\end{equation*}
But
                              $$\int_0^1\frac{(1-\rho)^{k-s}}{|1-r\rho\eta\overline{\zeta}|^{\lambda+k}}\frac{d\rho}{1-\rho} \lesssim \frac1{|1-r\eta\overline{\zeta}|^{\lambda+s}}.$$

The preceding estimate, together with Fubini's theorem give that
$$\int_0^1 (1-\rho)^{(k-s)} |(1+R)^k {\mathcal U}_{s,p,\lambda}(\rho\eta)|\frac{d\rho}{1-\rho}\lesssim \Phi(\eta),$$
where 
$$\Phi(\eta)= \int_0^1\int_{\bB} \left(\frac{\mu(B(\zeta, 1-r))}{(1-r)^{n-sp}} \right)^{p'-1} \frac{(1-r)^{\lambda-n}}{|1-r\eta\overline{\zeta}|^{\lambda+s}} d\sigma(\zeta)\frac{dr}{1-r}.$$
We will follow some of the arguments in \cite{cohnverbitsky}, page 87. The key point is to apply Fubini's theorem in such a way that we obtain on one hand $\mu(B(\zeta,1-r))$ raised to the power $1$ and on the other hand an expression where in the denominator we have $|1-r\eta\overline{\zeta}|$ raised to some power strictly greater that $n$.
Precisely, let $\varepsilon>0$ such that $n-s<\varepsilon <\frac{\lambda+s-n(2-p)}{p-1}.$
We then have that
\begin{equation*}\begin{split}\Phi(\eta)&= \int_0^1\int_{\bB} \left(\frac{\mu(B(\zeta, 1-r))}{(1-r)^{n-s}}\frac{(1-r)^{\varepsilon-n}}{|1-r\zeta\overline{\eta}|^\varepsilon} \right)^{p-1}\\&
\times
\left(\frac{\mu(B(\zeta, 1-r))}{(1-r)^{n-s}}\right)^{p'-p}\frac{(1-r)^{\lambda+s-n(2-p)-\varepsilon(p-1)}}{|1-r\eta\overline{\zeta}|^{\lambda+s-\varepsilon(p-1)}}d\sigma(\zeta)\frac{dr}{1-r}.
\end{split}\end{equation*}
H\"older's inequality with exponents $1/(p-1)>1$ and $1/(2-p)$ gives
$$\Phi(\eta)\lesssim \Phi_1(\eta)^{p-1} \Phi_2(\eta)^{2-p},$$
where 
$$\Phi_1(\eta)=\int_0^1\int_{\bB}\frac{\mu(B(\zeta, 1-r))}{(1-r)^{n-s}}\frac{(1-r)^{\varepsilon-n}}{|1-r\zeta\overline{\eta}|^\varepsilon}d\sigma(\zeta)\frac{dr}{1-r}$$
and
\begin{equation*}\begin{split}&
\Phi_2(\eta)\\&=\int_0^1\int_{\bB}\left(\frac{\mu(B(\zeta, 1-r))}{(1-r)^{n-s}}\right)^{p'}
\frac{(1-r)^{(\lambda+s-n(2-p)-\varepsilon(p-1))/(2-p)}}{|1-r\eta\overline{\zeta}|^{(\lambda+s-\varepsilon(p-1))/(2-p)}}d\sigma(\zeta)\frac{dr}{1-r}.\end{split}\end{equation*}
Fubini's Theorem gives that 
$$\Phi_1(\eta)\lesssim \mathcal{I}_s(\mu)(\eta).$$

Applying again H\"older's inequality with exponents $\gamma=\frac1{(p-1)^2}>1$ and $\gamma'=\frac1{p(2-p)}$, we obtain that
$$
\int_{\bB}\Phi(\eta)^p w(\eta)d\sigma(\eta)
 \lesssim \left(\int_{\bB} \mathcal{I}_s(\mu)^{p'}(\eta) w(\eta)d\sigma(\eta)\right)^\frac1{\gamma} 
 \left(\int_{\bB}\Phi_2(\eta)w(\eta)d\sigma(\eta) \right)^\frac1{\gamma'}.
$$

Next,
\begin{equation*} 
\int_{\bB} \Phi_2(\eta)w(\eta)d\sigma(\eta) 
 \lesssim\int_0^1\int_{\bB} \left(\frac{\mu(B(\zeta, 1-r))}{(1-r)^{n-s}}\right)^{p'}\Omega(\zeta,r)d\sigma(\zeta)\frac{dr}{1-r},
\end{equation*}
where $$
\Omega(\zeta,r)=\int_{\bB}
\frac{(1-r)^{(\lambda+s-n(2-p)-\varepsilon(p-1))/(2-p)}}{|1-r\eta\overline{\zeta}|^{(\lambda+s-\varepsilon(p-1))/
(2-p)}}w(\eta)d\sigma(\eta)
$$
We choose $1<q$ such that   $nq<\frac{\lambda+s-\varepsilon(p-1)}{2-p}$ (which is possible since $n<\frac{\lambda+s-\varepsilon(p-1)}{2-p}$). We recall that any $w\in A_1$ satisfies a doubling condition of order $\tau$, for any $\tau>n$, that is, $w(2^kB)\lesssim 2^{k\tau} w(B)$. Choosing $\tau<\frac{\lambda+s-\varepsilon(p-1)}{2-p}$, by decomposing in coronas in the usual way the integral over $\bB$ with respect to the variable $\eta$, we obtain:
$$
\Omega(\zeta,r)\lesssim \frac{w(B(\zeta,1-r))}{(1-r)^n}.
$$
Altogether, since $w\in A_1$, we have that 
\begin{equation*}\begin{split}&
\int_{\bB} \Phi_2(\eta)w(\eta)d\sigma(\eta)\\&\lesssim\int_0^1 \int_{\bB} \left( \frac{\mu(B(\zeta,1-r))}{(1-r)^{n-s}}\right)^{p'}\frac{w(B(\zeta,1-r))}{(1-r)^n} d\sigma(\zeta)\frac{dr}{1-r}\\&
\lesssim \int_0^1 \int_{\bB} \left( \frac{\mu(B(\zeta,1-r))}{(1-r)^{n-s}}\right)^{p'}w(\zeta) d\sigma(\zeta) \frac{dr}{1-r}\\&
=\int_0^1 \int_{\bB} \left( \frac{\mu(B(\zeta,1-r))}{(1-r)^{n-s}}\right)^{p'-1}\frac{\int_{B(\zeta,1-r)}d\mu(\eta)}{(1-r)^{n-s}}w(\zeta) d\sigma(\zeta) \frac{dr}{1-r}.
\end{split}
\end{equation*}
Next, if $\eta\in B(\zeta,1-r)$, we have that $B(\zeta,1-r)\subset B(\eta, 4(1-r))$. This fact together with Fubini's Theorem  gives  that the above integral is bounded by
\begin{equation*}\begin{split}&
\int_0^1 \int_{\bB} \left( \frac{\mu(B(\eta,4(1-r)))}{(1-r)^{n-s}}\right)^{p'-1}\frac1{(1-r)^{n-s}}w(B(\eta,4(1-r))) d\mu(\eta) \frac{dr}{1-r}\\&
=\int_0^1 \int_{\bB} \left( \frac{\mu(B(\eta,4(1-r)))}{(1-r)^{n-sp}}\right)^{p'-1}\frac{w(B(\eta,4(1-r)))}{(1-r)^{n}} d\mu(\eta) \frac{dr}{1-r}\\&\approx\int_{\bB}{\mathcal W}_{s,p,w^{-(p-1)}}(\mu)d\mu\approx \int_{\bB} \mathcal{I}_s(\mu)^{p'}(\eta) w(\eta)d\sigma(\eta),
\end{split}
\end{equation*}
where in the last estimate we have used the weighted Wolff's theorem (see \cite{adams}). 
Altogether, we obtain that
$$
\int_{\bB}\Phi(\eta)^p w(\eta)d\sigma(\eta)
 \lesssim  \int_{\bB} \mathcal{I}_s(\mu)^{p'}(\eta) w(\eta)d\sigma(\eta), 
 $$
that is we have proved \eqref{eqn:estimateweight} and then the case $p<2$.

{\bf Case $p\geq 2$.} As in the preceding case, it is enough to prove that
$$\left( \int_0^1|(1+R)^k {\mathcal V}_{s,p,\lambda}(\mu)(\rho\zeta)|(1-\rho)^{k-s} \frac{d\rho}{1-\rho}\right)^pd\sigma$$ is a trace measure for $H_s^p$.

It is proved in \cite{cohnverbitsky}, page 90, that
\begin{equation}\label{est:cohnve}\begin{split}&\int_0^1|(1+R)^k {\mathcal V}_{s,p,\lambda}(\mu)(\rho\zeta)|(1-\rho)^{k-s} \frac{d\rho}{1-\rho}\\&\lesssim
\int_0^1 \left(    \frac{\mu(B(\zeta,\delta))}{\delta^{n-s}}\right)^{p'-1}\frac{d\delta}{\delta}
:=\Phi(\zeta).\end{split}\end{equation}

We will partially follow some of the arguments used in that paper to prove an extension of Wolff's inequality. 
We first observe that
$$\Phi^p(\zeta)= p \int_0^1 \left(\frac{\mu(B(\zeta,t))}{t^{n-s}}\right)^{p'-1}\left( \int_0^t \left( \frac{\mu(B(\zeta,y))}{y^{n-s}}\right)^{p'-1} \frac{dy}{y}\right)^{p-1}\frac{dt}{t}.$$
If we choose $0<\varepsilon<s$, H\"older's inequality with exponent $p-1>1$, gives that
$$
\left( \int_0^t \left( \frac{\mu(B(\zeta,y))}{y^{n-s}}\right)^{p'-1} \frac{dy}{y}\right)^{p-1}\lesssim t^\varepsilon 
\int_0^t   \frac{\mu(B(\zeta,y))}{y^{n-s}}  \frac{dy}{y^{1+\varepsilon}}.
$$
Thus we have that
\begin{equation*}
\Phi^p(\zeta) 
\lesssim 
\int_0^1 \int_0^t \mu(B(\zeta,y))\mu(B(\zeta,t))^{p'-1} \frac{dy}{y^{(n-s)+\varepsilon+1}}\frac{dt}{t^{(n-s)(p'-1)-\varepsilon+1}}.
\end{equation*}
 
Now we check that $\Phi^pd\sigma$ is a trace measure for $\mathcal{I}_s[L^p]$, proving that it satisfies that for any open set $G\subset \bB$, $\int_G \Phi^pd\sigma\lesssim C_{s,p}(G)$. 

Let  $\mu_G$ be the $(s,p)$-extremal capacitary measure of $G$, and $w=\mathcal{I}_s(\mathcal{I}_s(\mu_G)^{p'-1})$. Assertion (vi) in Proposition  \ref{prop:extremalcapacity} gives that since $p\geq 2$, for any $1<\delta<\frac{(p-1)n}{n-sp}$, $w^\delta\in A_1$, and for any $\eta \in \bB$ and $y>0$, $$\frac{w^\delta(B(\eta,y))}{y^n}\lesssim w^\delta(\eta).$$

 We fix $\delta$. Since by (iii) in Proposition \ref{prop:extremalcapacity}, $w^\delta \gtrsim 1$ on $G$, we have that
 $$
 \int_G \Phi^pd\sigma\lesssim \int_G \Phi^pw^\delta d\sigma.
 $$

Let $\eta\in B(\zeta,t)$. Then $B(\zeta,t)\subset B(\eta, 4t)$, and since $0<y<t$, we have
$$\mu(B(\zeta,y))\mu(B(\zeta,t))^{p'-1}\leq  \int_{B(\zeta,y)}\mu(B(\eta,4t))^{p'-1}d\mu(\eta).$$
Integrating with respect to $w^\delta(\zeta)d\sigma(\zeta)$ we have
\begin{equation*}\begin{split}&
\int_{\bB}\mu(B(\zeta,y))\mu(B(\zeta,t))^{p'-1}w^\delta(\zeta)d\sigma(\zeta)\\& 
\lesssim \int_{\bB}\int_{B(\zeta,y)}\mu(B(\eta,4t))^{p'-1}d\mu(\eta)w^\delta(\zeta)d\sigma(\zeta)\\
&\leq\int_{\bB} \mu(B(\eta,4t))^{p'-1}w^\delta(B(\eta,y))d\mu(\eta).
\end{split}\end{equation*}

Using the above estimates, we deduce that

\begin{equation*}\begin{split}&
\int_{G} \Phi(\zeta)^p d\sigma(\zeta)\\&
\lesssim
\int_{\bB}\int_0^1 \left(\frac{\mu(B(\eta,4t))}{t^{n-s}} \right)^{p'-1}\int_0^t \frac{w^\delta(B(\eta,y))dy}{y^{n-s+\varepsilon+1}}\frac{dt}{t^{1-\varepsilon}}d\mu(\eta).
\end{split}\end{equation*}

The estimate \eqref{eqn:a1} gives then that the above integral can be bounded, up to a constant by

\begin{equation*}\begin{split}&
\int_{\bB}\int_0^1 \left(\frac{\mu(B(\eta,4t))}{t^{n-s}} \right)^{p'-1}\int_0^t \frac{w^\delta( \eta) dy}{y^{ -s+\varepsilon+1}}\frac{dt}{t^{1-\varepsilon}}d\mu(\eta)\\&=
\int_{\bB}\int_0^1 \left(\frac{\mu(B(\eta,4t))}{t^{n-s}} \right)^{p'-1} w^\delta( \eta ) t^{s-\varepsilon}\frac{dt}{t^{1-\varepsilon}}d\mu(\eta)\\&=
\int_{\bB}\int_0^1 \left(\frac{\mu(B(\eta,4t))}{t^{n-sp}} \right)^{p'-1} w^\delta( \eta )  \frac{dt}{t}d\mu(\eta)\approx \int_{\bB} {\mathcal W}_{s,p}(\mu) (\eta)w^\delta(\eta) d\mu(\eta).
\end{split}\end{equation*}
Now, since ${\mathcal W}_{s,p}(\mu) $ is bounded on ${\bB}$,  then $\mu$ is a trace measure for ${\mathcal I}_s[L^p]$ (see the proof of Theorem \ref{thm:multex}) and we can bound the above integral, up to a constant, by
\begin{equation*}\begin{split}&
\int_{\bB}  w^\delta(\eta) d\mu(\eta)\\&
=\int_0^M \mu(\{\eta;\, w(\eta)\geq t\}) t^{\delta-1}dt\lesssim  \int_0^M C_{s,p}(\{\eta;\, w(\eta)\geq t\}) t^{\delta-1}dt.
\end{split}\end{equation*}

Since
$$ C_{s,p}(\{\eta;\, w(\eta)\geq t\})\lesssim \frac{C_{s,p}(G)}{t},$$ 
by (v) in  Proposition  \ref{prop:extremalcapacity}, this last integral is bounded, up to a constant,
by $$C_{s,p}(G)\int_0^M t^{\delta-2}dt\approx C_{s,p}(G).$$
That finishes the proof of the case $p\geq 2$.
\end{proof}

\begin{rem}\label{rem:c2s}
If $p=2$, we can replace the potentials ${\mathcal V}_{s,p,\lambda}(\mu)$ considered in the above theorem, by ${\mathcal C}_{2s}(\mu)$.  This observation is a consequence of the fact  that the estimate \eqref{est:cohnve} holds for ${\mathcal C}_{2s}(\mu)$, that is,
$$\int_0^1 |(1+R)^k {\mathcal C}_{2s}(\mu)(\rho\zeta)|(1-\rho)^{k-s}\frac{d\rho}{1-\rho}\lesssim \int_0^1 \frac{\mu(B(\zeta,\delta))}{\delta^{n-s}}\frac{d\delta}{\delta}.$$
The same arguments used to finish the case $p\geq 2$ for the potentials ${\mathcal V}_{s,p,\lambda}(\mu)$, prove that if ${\mathcal C}_{2s}(\mu)$ is bounded, then it is a multiplier for $H_s^p$. 
\end{rem}

\subsection{Proof of Theorem \ref{prop:multiplierscasp=2}}
\begin{proof}
Assertion \eqref{item:multiplierscasp=21} in Theorem \ref{prop:multiplierscasp=2} follows from Theorem \ref{thm:multex}. Assertion \eqref{item:multiplierscasp=22} is a consequence of Corollary \ref{prop:multiplierscas2} and Remark \ref{rem:c2s}.
\end{proof}

\section{Proof of Theorem \ref{thm:multsextremal}}\label{sec:examples}
\begin{proof}

We begin with the case $p<2$ and $n-s<1$. We will show that 
\begin{equation}\label{eq:pointcappotential}M_{rad}[{\mathcal U}_{s,p,\lambda}(\mu_E)]\lesssim 
\mathcal{I}_s[\mathcal{I}_s[\mu_E]^{p'-1}].\end{equation}
Since $\mu_E$ is the capacitary extremal measure associated to $E$, we have that $\mathcal{I}_s[\mathcal{I}_s(\mu_E)^{p'-1}]$ is bounded, and consequently, if \eqref{eq:pointcappotential} holds, ${\mathcal U}_{s,p,\lambda}(\mu_E)$ is also a bounded function. Theorem \ref{thm:multpot} gives then that the potential  ${\mathcal U}_{s,p,\lambda}(\mu_E)$ is a multiplier for $H_s^p$.
So we are led to show \eqref{eq:pointcappotential}.

Since $\lambda>n-s$, we have
\begin{equation*}\begin{split}&
|{\mathcal U}_{s,p,\lambda}(\mu_E)(z)| \\&
\approx \int_0^1\int_{\bB} \left(\frac{\mu_E(B(\zeta,1-r))}{(1-r)^{n-sp}} \right)^{p'-1}\frac{(1-r)^{\lambda-n}}{|1-rz\overline{\zeta}|^\lambda}d\sigma(\zeta)\frac{dr}{1-r}\\&
\leq \int_0^1\int_{\bB} \left(\frac{\mu_E(B(\zeta,1-r))}{(1-r)^{n-s}} \right)^{p'-1}\frac{1}{|1-z\overline{\zeta}|^{n-s}}d\sigma(\zeta)\frac{dr}{1-r}
\end{split}
\end{equation*}
Next,  since $\frac1{p'-1}<1$, and $\mu_E(B(\zeta,1-r))$ is a decreasing function on $r$, we have
$$
\int_0^1 \left(\frac{\mu_E(B(\zeta,1-r))}{(1-r)^{n-s}} \right)^{p'-1}\frac{dr}{1-r}\lesssim \left(\int_0^1\frac{\mu_E(B(\zeta, (1-r)))}{(1-r)^{n-s}}\frac{dr}{1-r}\right)^{p'-1}.$$
Hence 
\begin{equation*}\begin{split}&
|{\mathcal U}_{s,p,\lambda}(\mu_E)(z)| \\&
\lesssim \int_{\bB}\left(\int_0^1\frac{\mu_E(B(\zeta, (1-r)))}{(1-r)^{n-s}}\frac{dr}{1-r}\right)^{p'-1}\frac{d\sigma(\zeta)}{|1-z\overline{\zeta}|^{n-s}}\\&\approx \mathcal{I}_s[\mathcal{I}_s(\mu_E)^{p'-1}](z).
\end{split}\end{equation*}

In particular, we deduce  \eqref{eq:pointcappotential}.

Next we deal with the case $p\geq 2$. In this case we will check that
\begin{equation}\label{eq:pointcappotential2}M_{rad}[{\mathcal V}_{s,p,\lambda}(\mu_E)](\eta)\lesssim 
{\mathcal W}_{s,p}[\mu_E](\eta),\end{equation}
which again gives that since $\mu_E$ is the capacitary extremal function that ${\mathcal V}_{s,p,\lambda}(\mu_E)$ is a bounded and hence a multiplier for $H_s^p$.
We have that
\begin{equation*}\begin{split}&{\mathcal V}_{s,p,\lambda}[\mu_E](z)\\&\leq \int_0^1 \left(\int_{\bB} \frac{(1-r)^{\lambda+sp-n}}{|1-rz\overline{\zeta}|^\lambda}d\mu_E(\zeta)\right)^{p'-2}\int_{\bB} \frac{(1-r)^{\lambda+sp-n}}{|1-rz\overline{\xi}|^\lambda}d\mu_E(\xi)\frac{dr}{1-r}.\end{split}\end{equation*}

Now, write $z=\rho\eta$ and fix $\delta<1$. Since $|1-rz\overline{\zeta}|\approx (1-r) +(1-\rho) + |1-\eta\overline{\zeta}|$, we have
$$
\int_{\bB} \frac{d\mu_E(\zeta)}{|1-rz\overline{\zeta}|^\lambda}\geq \int_{B(\eta,\delta)} \frac{d\mu_E(\zeta)}{|1-rz\overline{\zeta}|^\lambda}\gtrsim \frac{\mu_E(B(\eta,\delta))}{(\delta+1-r+1-\rho)^\lambda}.$$
On the other hand,
\begin{equation*}\begin{split}&
\int_{\bB} \frac{d\mu_E(\xi)}{|1-rz\overline{\xi}|^\lambda}
\lesssim\int_{\bB}d\mu_E(\xi)\int_{|1-rz\overline{\xi}|<\delta} \frac{d\delta}{\delta^{\lambda+1}} \\
&\lesssim  \int_0^1\mu_E(B(\eta,\delta))\frac{d\delta}{(\delta+1-r+1-\rho)^{\lambda+1}}.
\end{split}\end{equation*}
The fact that we are assuming that $p\geq 2$ gives that $p'-2\leq 0$, and, consequently the above estimates give that
\begin{equation*}\begin{split}&
|{\mathcal V}_{s,p,\lambda}[\mu_E](\rho\eta)|\\&\lesssim\int_0^1  \int_0^1 \mu_E(B(\eta,\delta))^{p'-1}\frac{ (1-r)^{(\lambda+sp-n)(p'-1)}d\delta dr}{(\delta+1-r+1-\rho)^{\lambda+1+\lambda(p'-2)}(1-r)} .
\end{split}\end{equation*}
But
$$
\int_0^1 \frac{(1-r)^{(\lambda-(n-sp))(p'-1)}}{(\delta+1-r)^{\lambda(p'-1)+1}}\frac{dr}{1-r} \lesssim
\frac1{\delta^{(n-sp)(p'-1)+1}}.
$$
Plugging this last estimate in the above one, we obtain
$$
M_{rad}[{\mathcal V}_{s,p,\lambda}[\mu_E]](\eta)\lesssim \int_0^1 \left( \frac{\mu_E(B(\eta,\delta))}{\delta^{n-sp}}\right)^{p'-1} \frac{d\delta}{\delta}={\mathcal W}_{s,p}[\mu_E](\eta),$$
and that ends the proof.
\end{proof}

\section{Applications}\label{sec:applications}

In this section we give some  applications of the above results on multipliers for Hardy-Sobolev spaces.

In the first application we extend a result of Beatrous and Burbea (see \cite{beatrousburbea2}), which proved the same result for $s$ a positive integer.
\begin{prop}
Let  $1<p<\infty$ and $0<s<n/p$.  Then $H^\infty\cap H^{n/s}_s\subset Mult(H^p_s)$.
\end{prop}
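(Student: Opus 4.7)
The plan is to verify the Carleson-measure characterization of $Mult(H^p_s)$ given by Theorem~\ref{thm:charmult}. Since $g\in H^\infty$ is already part of the hypothesis, it suffices to show that, for some integer $k>s$, the measure $d\mu_g(w):=|(1+R)^k g(w)|^2(1-|w|^2)^{2(k-s)}\,d\nu(w)$ is a Carleson measure for $H^p_s=F^{p,2}_s$; unwinding the tent-space norm, the task is to prove
\begin{equation*}
\int_{\bB}\left(\int_{\Gamma(\zeta)}|f(w)|^2|(1+R)^k g(w)|^2(1-|w|^2)^{2(k-s)-n-1}\,d\nu(w)\right)^{p/2}d\sigma(\zeta)\lesssim \|f\|_{H^p_s}^p\|g\|_{H^{n/s}_s}^p
\end{equation*}
for every $f\in H^p_s$.

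The first step is an elementary pointwise reduction: on $\Gamma(\zeta)$ one has $|f(w)|\le N^{*}f(\zeta):=\sup_{w\in\Gamma(\zeta)}|f(w)|$, so pulling $(N^{*}f(\zeta))^2$ outside the inner integral leaves precisely the square of the Littlewood--Paley area function $Sg(\zeta)$ attached to $(1+R)^k g$. Hence the left-hand side above is dominated by $\int_{\bB}(N^{*}f)^p(Sg)^p\,d\sigma$. Next I would apply Hölder's inequality on $(\bB,d\sigma)$ with the conjugate exponents $a=n/(n-sp)$ and $b=n/(sp)$, both of which lie in $(1,\infty)$ because the hypothesis $0<s<n/p$ gives $0<sp<n$; this splits the quantity into $\|N^{*}f\|_{L^{np/(n-sp)}(\bB)}^p\cdot\|Sg\|_{L^{n/s}(\bB)}^p$.

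Finally, by the very definition of $F^{n/s,2}_s$ via the tent space $T^{n/s,2}$, the second factor equals $\|g\|_{H^{n/s}_s}^p$. The first factor is controlled by the classical nontangential maximal estimate $\|N^{*}f\|_{L^q}\lesssim\|f\|_{H^q}$ with $q=np/(n-sp)$, combined with the Sobolev-type embedding $H^p_s\hookrightarrow H^{np/(n-sp)}$ provided by Proposition~\ref{prop:embedF}\eqref{item:embedF4} (applied with $p_0=q$ and $s_0=0$). Assembling these ingredients yields the Carleson estimate, and Theorem~\ref{thm:charmult} then delivers $g\in Mult(H^p_s)$. There is no substantial obstacle: the argument is a single application of Hölder's inequality, and the only delicate point is the bookkeeping showing that the conjugate pair $(a,b)$ is forced simultaneously by the need to hit the critical Sobolev exponent $np/(n-sp)$ on the $f$-side and to reproduce exactly the defining tent-space norm of $g$ in $H^{n/s}_s$ on the other side --- which is precisely why the critical Hardy--Sobolev space $H^{n/s}_s$ is the natural space in the statement.
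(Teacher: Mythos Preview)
Your proof is correct. Both arguments hinge on the same H\"older splitting with conjugate exponents $n/(n-sp)$ and $n/(sp)$, together with the Sobolev embedding $H^p_s\hookrightarrow H^{np/(n-sp)}$ from Proposition~\ref{prop:embedF}\eqref{item:embedF4}; the only difference is \emph{which} equivalent characterization of $Mult(H^p_s)$ from Theorem~\ref{thm:charmult} is verified. The paper checks condition~\eqref{item:charmult3} with $N=0$, $M=s$: since $\PN^{0,s}$ is bijective from $H^{n/s}_s$ to $H^{n/s}$, one has $h:=\PN^{0,s}g\in H^{n/s}$, and the boundary trace inequality $\|hf\|_{H^p}\le\|h\|_{H^{n/s}}\|f\|_{H^{np/(n-sp)}}\lesssim\|h\|_{H^{n/s}}\|f\|_{H^p_s}$ follows from a single application of H\"older on $\bB$. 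You instead verify condition~\eqref{item:charmult2} (the $CF^{p,2}_s$ tent-space Carleson condition) by factoring the integrand into the nontangential maximal function of $f$ and the Littlewood--Paley area function of $g$, and then applying H\"older on $\bB$. The paper's route is marginally shorter because the boundary H\"older step needs no area-function/maximal-function detour; your route has the small advantage that it avoids invoking the bijectivity of $\PN^{0,s}\colon H^{n/s}_s\to H^{n/s}$ and works directly with the tent-space definition of the $H^{n/s}_s$-norm.
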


\begin{proof}
Let $g\in  H^\infty\cap H^{n/s}_s$.
Since $g$ is bounded, by Theorem \ref{thm:bijectMF}, it is sufficient to prove that $|\PN^{0,s} g|^pd\sigma$ 
is a trace measure for $H^p_s$.

Since $\PN^{0,s}$ is a bijective operator from $H^{n/s}_s$ to $H^{n/s}$ (see Proposition \ref{prop:PNMbij}), it is sufficient to prove that if $h\in H^{n/s}$, then 
$|h|^pd\sigma$ is a trace measure for $H^p_s$.

Let  $r=n/s>p$ and let $q$ be a real number satisfying $\frac{-n}{q}=s-\frac{n}{p}$, that is $1/p=1/q+1/r$. 
Since $s>0$, we have $q>p$ and, by Proposition \ref{prop:embedF}, \eqref{item:embedF4} $H^p_s\subset H^q$. 

Thus, if $h\in H^{n/s}$ and $f\in H^p_s\subset H^q$, H\"older's inequality gives 
$$
\|hf\|_{H^p}\le \|h\|_{H^{n/s}}\, \|f\|_{H^q}\lesssim \|h\|_{H^{n/s}}\, \|f\|_{H^p_s}
$$ 
which proves the result.
\end{proof}

The next application shows that there exists a strong solution of the Corona Theorem for multipliers of $H_s^p$ for some particular data.
\begin{prop}\label{prop:corona}
Let $1<p<\infty$, $0<n-sp<1$ and assume in addition that $n-s<1$ if $p< 2$.  Let $K_i$, $i=1,\dots,l$ be compact subsets of $\bB$ such that $\bigcup_{i=1}^l K_i=\bB$.  Let $V_i$ $i=1,\dots,l$ be the potential multipliers for $H_s^p$ given in Theorem \ref{thm:multsextremal} associated to the extremal measures of the compact sets $K_i$.  We then have that
there there exist multipliers of $H_s^p$, $g_i$, $i=1,\dots,l$,  such that
$$1=\sum_{i=1}^l V_i g_i.$$
That is, there exists a strong solution of the Corona Theorem for multipliers of $H_s^p$ with data $V_1,\dots, V_l$.
\end{prop}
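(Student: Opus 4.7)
The plan is to set $g_i := 1/F$ for all $i=1,\dots,l$, where $F:=\sum_{i=1}^l V_i$. Since $F$ is a finite sum of multipliers of $H_s^p$, it is itself a multiplier, and the identity $\sum_i V_ig_i = F\cdot(1/F) = 1$ reduces the proof to two points: that $F$ is bounded away from zero on $\B$, and that $1/F$ is itself a multiplier of $H_s^p$.

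For the lower bound on $|F|$, I would combine the boundary estimates for the holomorphic potentials of Proposition \ref{prop:pgreaterthan2} with the covering hypothesis. For every $\eta\in\bB$,
\[
\liminf_{\rho\to 1}\mathrm{Re}\,V_i(\rho\eta)\geq C_1\,{\mathcal W}_{s,p}(\mu_{K_i})(\eta),
\]
and by Proposition \ref{prop:extremalcapacity}(iii), ${\mathcal W}_{s,p}(\mu_{K_i})(\eta)\geq C$ whenever $\eta\in K_i$. Since $\bigcup_i K_i=\bB$, summing over $i$ yields $\liminf_{\rho\to 1}\mathrm{Re}\,F(\rho\eta)\geq CC_1>0$ uniformly in $\eta$. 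Because each $V_i$ is bounded (Theorem \ref{thm:multsextremal}), $F\in H^\infty$, and the bounded harmonic function $\mathrm{Re}\,F$ coincides with the Poisson integral of its a.e.\ non-tangential boundary values; this propagates the boundary estimate inside $\B$, giving $\mathrm{Re}\,F(z)\geq CC_1$ on $\B$, hence $|F(z)|\geq\delta:=CC_1>0$. In particular $1/F\in H^\infty$.

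To check the remaining condition $1/F\in CF_s^{p,2}$, so that $1/F\in H^\infty\cap CF_s^{p,2}=Mult(H_s^p)$ by Theorem \ref{thm:charmult}, I would use the Fa\`a di Bruno expansion: for any positive integer $k>s$,
\[
R^k(1/F) = \sum_{\alpha:\,\sum_j j\alpha_j = k} c_\alpha\,\frac{\prod_j(R^jF)^{\alpha_j}}{F^{\,1+|\alpha|}},\qquad |\alpha|=\sum_j\alpha_j.
\]
The denominators $F^{1+|\alpha|}$ are uniformly bounded below by $\delta^{1+|\alpha|}$, so verifying the Carleson measure condition defining $CF_s^{p,2}$ for $1/F$ reduces to analogous conditions on the products $\prod_j(R^jF)^{\alpha_j}$. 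Since $F\in CF_s^{p,2}$, Proposition \ref{prop:charMssp} gives $R^jF\in Mult(H_s^p\to H_{s-j}^p)$ for every $j$, and multilinear H\"older-type estimates on tent spaces, organized via an induction on $s$, assemble the required Carleson condition on the product. The base case $0\le s<1$ is direct: then $s-1<0$ and $Mult(F_{s-1}^{p,2})=H^\infty$, so the identity $R^1_M(1/F)=1/F-RF/(MF^2)$ together with $1/F,\,1/F^2\in H^\infty$ and $RF\in Mult(H_s^p\to H_{s-1}^p)$ immediately yields $R^1_M(1/F)\in Mult(H_s^p\to H_{s-1}^p)$, hence $1/F\in CF_s^{p,2}$ by Proposition \ref{prop:charMssp}.

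The main obstacle is this second step, i.e.\ the inverse-closedness of $Mult(H_s^p)$ inside $H^\infty$: the multilinear Carleson-measure estimates on the products $\prod_j(R^jF)^{\alpha_j}$ become delicate once $s\ge 1$ and require a careful iteration of Proposition \ref{prop:charMssp} together with tent-space H\"older inequalities. Once this is settled, the choice $g_i=1/F\in Mult(H_s^p)$ for each $i$ completes the strong corona identity $1=\sum_i V_ig_i$.
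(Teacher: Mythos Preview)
Your approach matches the paper's exactly: set $g_i=V:=1/\sum_j V_j$, show $\mathrm{Re}\sum_j V_j\gtrsim 1$ on $\B$ via the Poisson--Szeg\H{o} representation, and conclude that $V\in Mult(H_s^p)$. The paper is in fact briefer than you are on the inverse--closedness step: it simply asserts, citing Theorem~\ref{thm:charmult}, that since each $V_i\in Mult(H_s^p)$ and $V\in H^\infty$, one has $V\in Mult(H_s^p)$.

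Two remarks. First, your worry about the case $s\ge 1$ is unnecessary: the Fa\`a di Bruno terms $\prod_j (R^jF)^{\alpha_j}/F^{1+|\alpha|}$ are handled directly by Proposition~\ref{prop:grown}\eqref{item:grown4}. Indeed, $R^jF\in Mult(H_s^p\to H_{s-j}^p)$, and the shift property $Mult(F_s^{p,q}\to F_{s'}^{p,q})\subset Mult(F_{s-\tau}^{p,q}\to F_{s'-\tau}^{p,q})$ lets you compose these multipliers so that the product lies in $Mult(H_s^p\to H_{s-k}^p)$; the bounded factor $1/F^{1+|\alpha|}\in H^\infty=Mult(H_{s-k}^p)$ (since $s-k<0$) then finishes the job. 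No tent--space H\"older inequalities or induction on $s$ are needed.

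Second, in your lower--bound argument you should make explicit, as the paper does, that $\mathrm{Re}\,V_i^*(\eta)\ge 0$ for \emph{all} $\eta\in\bB$ (this follows from Proposition~\ref{prop:pgreaterthan2} since ${\mathcal W}_{s,p}(\mu_{K_i})\ge 0$). Without this the sum $\sum_i \mathrm{Re}\,V_i^*(\eta)$ could a priori be dragged down by the terms with $\eta\notin K_i$.
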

\begin{proof}
Since $V_i\in H_s^p$, for each $i=1,\dots,l$, there exists a.e $V_i^*(\eta)=\lim_{r\rightarrow 1} V_i(r\eta)\in H^p$. On the other hand, by (iii) in Proposition \ref{prop:extremalcapacity}, for almost every $\eta\in K_i$,   ${\rm Re}V_i^*(\eta)\gtrsim 1$. In addition,  Proposition \ref{prop:pgreaterthan2} gives that ${\rm Re}V_i^*(\eta)\geq 0$. Consequently,
$${\rm Re}\sum_{i=1}^lV_i^*(\eta)\gtrsim 1,$$
a.e. $\eta\in\bB$.
Thus, if $P(z,\eta)$ denotes de Poisson-Szeg\"o kernel,
$${\rm Re}\sum_{i=1}^lV_i
(z)=\int_{\bB} P(z,\eta){\rm Re}\sum_{i=1}^lV_i^*(\eta)d\sigma(\eta) \gtrsim 1,$$
for any $z\in \B$.
This estimate gives that
$V:=\frac1{\sum_{i=1}^lV_i}\in H^\infty$. Since $V_i$, $i=1,\dots, l$ are multipliers for $H_s^p$, we deduce from Theorem \ref{thm:charmult}, that
$V$ is also multiplier for $H_s^p$. Thus $1=\sum_{i=1}^l V_i V$.

\end{proof}
Our last application shows that under the same hypothesis than before, for  compact sets $K\subset \bB$ of nonisotropic Riesz capacity zero there exists a sequence $(m_k)_k$ of multipliers for $H_s^p$ which converges in $H_s^p$, and such that $\liminf_{\rho\rightarrow 1} |m_k(\rho(\eta))|=\infty$ for any $\eta\in K$. In this sense, we could say that such compact sets are weak exceptional sets for the multipliers of $H_s^p$.
\begin{prop}\label{prop:compact}
Let $1<p<\infty$, $0<n-sp<1$. Assume in addition that $n-s<1$ if $p< 2$. Let $K\subset \bB$ be a compact set such that $C_{s,p}(K)=0$. We then have that there exists a sequence $(m_k)_k$ of multipliers of $H_s^p$,   such  that 
\begin{enumerate}
\item The sequence $(m_k)_k$ converges to a function $F$ in $H_s^p$.
\item There exists $C>0$ such that $\liminf_{\rho\rightarrow1} |m_k(\rho\eta)|\geq C k$, for any $\eta\in K$.
\end{enumerate}

\end{prop}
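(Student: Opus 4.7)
The plan is to build $m_k$ as a partial sum of a rapidly convergent series of holomorphic capacitary potentials attached to shrinking open neighborhoods of $K$.

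Since $C_{s,p}(K)=0$ and capacity is outer regular on compact sets, I can choose a nested sequence of open sets $G_j\supset K$ with $C_{s,p}(G_j)\le 2^{-pj}$. Let $\mu_j:=\mu_{G_j}$ be the extremal capacitary measure furnished by Proposition \ref{prop:extremalcapacity}, and split into the two cases of Theorem \ref{thm:multsextremal}: if $1<p<2$ pick $n-s<\lambda<1$ and set $V_j:={\mathcal U}_{s,p,\lambda}(\mu_j)$; if $p\ge 2$ pick $n-sp<\lambda<1$ and set $V_j:={\mathcal V}_{s,p,\lambda}(\mu_j)$. Theorem \ref{thm:multsextremal} then gives $V_j\in Mult(H_s^p)$, and Proposition \ref{prop:pgreaterthan2} together with Proposition \ref{prop:extremalcapacity}(ii) yields
\[
\|V_j\|_{H_s^p}^p\lesssim {\mathcal E}_{s,p}(\mu_j)=C_{s,p}(G_j)\le 2^{-pj}.
\]

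Define $m_k:=\sum_{j=1}^k V_j$ and $F:=\sum_{j=1}^\infty V_j$. The norm bound above makes the series absolutely convergent in $H_s^p$, so $F\in H_s^p$ and $m_k\to F$ in $H_s^p$; and since $Mult(H_s^p)$ is a vector space, each finite sum $m_k$ is a multiplier. For the lower bound on $K$, Proposition \ref{prop:pgreaterthan2} together with Proposition \ref{prop:extremalcapacity}(iii) shows that for every $j$ and every $\eta\in K\subset G_j$,
\[
\liminf_{\rho\to 1}\operatorname{Re}V_j(\rho\eta)\ge C_1\,{\mathcal W}_{s,p}(\mu_j)(\eta)\ge C_1c_0,
\]
with a constant $C_1c_0>0$ independent of $j$ and $\eta$. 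Using superadditivity of $\liminf$ for real-valued sequences,
\[
\liminf_{\rho\to 1}\operatorname{Re}m_k(\rho\eta)\ge \sum_{j=1}^k\liminf_{\rho\to 1}\operatorname{Re}V_j(\rho\eta)\ge Ck,
\]
and since $|z|\ge\operatorname{Re}z$ this gives $\liminf_{\rho\to 1}|m_k(\rho\eta)|\ge Ck$ uniformly on $K$.

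The only real obstacle is ensuring that the decay of $\|V_j\|_{H_s^p}$ is genuinely summable while the pointwise lower bound remains a uniform positive constant; this is exactly the information contained in Proposition \ref{prop:extremalcapacity}(ii)--(iii) (the $(s,p)$-energy of $\mu_{G_j}$ equals $C_{s,p}(G_j)$, which we make as small as we like, whereas ${\mathcal W}_{s,p}(\mu_{G_j})$ is bounded below by an absolute constant on $G_j$). Everything else is bookkeeping: Theorem \ref{thm:multsextremal} gives the multiplier property, completeness of $H_s^p$ gives convergence of $(m_k)$, and the superadditivity of $\liminf$ delivers the linear growth along radii to points of $K$.
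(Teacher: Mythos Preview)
Your proof is correct and follows essentially the same approach as the paper's: choose open neighborhoods $G_j\supset K$ with small capacity, take the associated holomorphic capacitary potentials $V_j$, and set $m_k=\sum_{j=1}^k V_j$. The only cosmetic differences are that you invoke Proposition~\ref{prop:pgreaterthan2}(b) directly for the $H_s^p$-norm bound (the paper cites \cite{aherncohn} here) and you spell out the superadditivity of $\liminf$ explicitly; otherwise the arguments coincide.
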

\begin{proof}
For any $k$, let $G_k\subset \bB$ be an open set satisfying that $K\subset G_k$ and $C_{s,p}(G_k)\leq \frac1{2^k}$.
Let $\mu_k$ be the extremal potential capacity associated to $G_k$ and $F_k$ the corresponding holomorphic potential given in \eqref{holompot1} and \eqref{holompot2}. By Theorem \ref{thm:multsextremal}, $F_k$ is a multiplier for $H_s^p$, and we also have (see \cite{aherncohn}), that  $$\|F_k\|_{H_s^p}\lesssim C_{s,p}(G_k)\leq \frac1{2^k}.$$

For $k\geq 1$, we define $m_k:= \sum_{i=1}^kF_i$. These functions verify the required properties, since by Proposition \ref{prop:pgreaterthan2}, for each $\eta \in K$ we have
$$\liminf_{\rho\rightarrow1}{\rm Re}F_k(\rho\eta)\gtrsim {\mathcal W}_{s,p}(\mu_k)(\eta)\gtrsim 1,$$
which ends the proof.
\end{proof}

\section{Proof of Theorem \ref{thm:mf}} \label{sec:mf}

\subsection{A Taylor's formula with explicit error term}

\begin{lem} \label{lem:TaylorKernel}
Let $k,m$ be positive integers and let $z,w,u\in \B$. Then we have
\begin{align*}
\frac{1}{(1-z\overline u)^{m}}&
=\sum_{j=0}^k \overline{R^{m-1}_{1,u}}\frac {((z-w)\overline u)^{j}}{(1-w\overline u)^{j+1}} 
+\overline{R^{m-1}_{1,u}}\frac{((z-w)\overline u)^{k+1}}{(1-z\overline u)(1-w\overline u)^{k+1}}
\end{align*}
\end{lem}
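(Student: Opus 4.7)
The plan is to reduce the claimed identity to the elementary finite geometric series expansion, then apply the differential operator $\overline{R^{m-1}_{1,u}}$ to both sides and identify the left-hand side via Lemma~\ref{lem:Rkt}.

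First I would write the basic Taylor-with-remainder identity. Setting $t=\tfrac{(z-w)\overline u}{1-w\overline u}$, the factorization $1-z\overline u=(1-w\overline u)(1-t)$ together with the geometric sum $\sum_{j=0}^{\infty}t^j = \sum_{j=0}^{k}t^j+\tfrac{t^{k+1}}{1-t}$ gives, purely algebraically,
\begin{equation}\label{eqn:taylorbase}
\frac{1}{1-z\overline u}
=\sum_{j=0}^k\frac{((z-w)\overline u)^{j}}{(1-w\overline u)^{j+1}}
+\frac{((z-w)\overline u)^{k+1}}{(1-z\overline u)(1-w\overline u)^{k+1}}.
\end{equation}
This is the $m=1$ case of the lemma (where $R^{0}_{1,u}$ is the identity).

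Next I would apply the antiholomorphic differential operator $\overline{R^{m-1}_{1,u}}$ to both sides of \eqref{eqn:taylorbase}. The right-hand side immediately yields the two sums written in the statement. For the left-hand side, I would observe that for each multi-index computation,
\[
\overline{R_u}\frac{1}{1-z\overline u}
=\sum_{i=1}^{n}\overline u_i\,\partial_{\overline u_i}\frac{1}{1-z\overline u}
=\frac{z\overline u}{(1-z\overline u)^{2}}
=R_z\frac{1}{1-z\overline u},
\]
so the operators $\overline{R_u}$ and $R_z$ agree on the kernel $\tfrac{1}{1-z\overline u}$ (and hence on any expression obtained from it by iteration). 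By induction this gives $\overline{R^{m-1}_{1,u}}\tfrac{1}{1-z\overline u}=R^{m-1}_{1,z}\tfrac{1}{1-z\overline u}$, which by Lemma~\ref{lem:Rkt}(ii) applied with $t=1$ and exponent $m-1$ equals $(1-z\overline u)^{-m}$. This is exactly the required left-hand side.

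The only subtlety, and really the single nontrivial point, is the verification that $\overline{R^{m-1}_{1,u}}$ and $R^{m-1}_{1,z}$ produce the same result when acting on the Cauchy-type kernel. Once that observation is in place, the lemma follows from \eqref{eqn:taylorbase} by a single application of the operator, with no convergence issues since both sides are rational in the relevant variables.
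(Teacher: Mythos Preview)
Your proof is correct and follows essentially the same route as the paper: establish the $m=1$ case via the finite geometric series identity (the paper writes it with $\lambda=z\overline u$, $\kappa=w\overline u$, you with the equivalent substitution $t=\tfrac{(z-w)\overline u}{1-w\overline u}$), then apply $\overline{R^{m-1}_{1,u}}$ and use $\overline{R^{m-1}_{1,u}}\tfrac{1}{1-z\overline u}=\tfrac{1}{(1-z\overline u)^m}$. Your justification of this last identity via the equality $\overline{R_u}=R_z$ on the kernel and Lemma~\ref{lem:Rkt} is a bit more explicit than the paper's, which simply states the fact.
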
 

\begin{proof}  For $\lambda$ and $\kappa$  in the open unit disk of $\C$ we have
\begin{align*}
\frac{1}{1-\lambda}
&=\sum_{j=0}^k \frac {(\lambda-\kappa)^{j}}{(1-\kappa)^{j+1}} +\frac{(\lambda-\kappa)^{k+1}}{(1-\lambda)(1-\kappa)^{k+1}}.
\end{align*}

Taking $\lambda=z\overline u$ and $\kappa=w\overline u$, we obtain the case $m=1$.

Since 
$\displaystyle{ \overline{R^{m-1}_{1,u}}\frac{1}{1-z\overline u}=\frac{1}{(1-z\overline u)^{m}}}$, 
the case $m>1$ follows from the case $m=1$.
\end{proof}

\begin{lem}[Taylor's formula] \label{lem:Taylorf}
Let $L,k,l$ be nonnegative integers and $f\in B^1_{-L}$. If $w\in\B$, then we have  
\begin{align*}
f(z)=\sum_{j=0}^k\sum_{|\alpha|=j}\frac{1}{\alpha!}\p^\alpha f(w) (z-w)^\alpha+E^k_{L,l}(f)(z,w)
\end{align*}
where
$$
E^k_{L,l}(f)(z,w):= c_{L+l}\int_{\B}R^l_{n+L} f(u) (1-|u|^2)^{L+l-1} E^k_{L}(z,w,u) d\nu(u)
$$
and
$$
E^k_{L}(z,w,u):=\overline{R^{n+L-1}_{1,u}}\frac{((z-w)\overline u)^{k+1}}{(1-z\overline u)(1-w\overline u)^{k+1}}.
$$
\end{lem}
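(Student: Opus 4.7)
The plan is to derive the formula from three ingredients: the reproducing formula for $B^1_{-L}$, the kernel-level Taylor identity of Lemma 10.1, and the integration-by-parts identity of Proposition 2.7.

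Since $f\in B^1_{-L}$, Proposition 2.5 gives $f(z)=\int_{\B}f(u)(1-z\overline u)^{-(n+L)}\,d\nu_L(u)$. Applying Lemma 10.1 with $m=n+L$ to the kernel inside this integral splits $f(z)$ as the sum of $k+1$ polynomial-kernel integrals plus the remainder $I_k(z,w):=\int_{\B}f(u)E^k_L(z,w,u)\,d\nu_L(u)$.

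For the polynomial part, the core step is the pointwise identity
\begin{equation*}
\overline{R^{n+L-1}_{1,u}}\frac{((z-w)\overline u)^j}{(1-w\overline u)^{j+1}}=\sum_{|\alpha|=j}\frac{(z-w)^\alpha}{\alpha!}\,\frac{\Gamma(n+L+|\alpha|)}{\Gamma(n+L)}\,\frac{\overline u^\alpha}{(1-w\overline u)^{n+L+|\alpha|}},
\end{equation*}
which I would check by expanding both sides as power series in $\overline u$: on the left, $\overline{R^{n+L-1}_{1,u}}\overline u^\gamma=\binom{n+L-1+|\gamma|}{n+L-1}\overline u^\gamma$ produces exactly the $\Gamma$-factor visible on the right, and the remaining combinatorial identity between multinomial coefficients is routine. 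Integrating this against $f\,d\nu_L$ and combining with the derivative-reproducing formula
$$\partial^\alpha f(w)=\frac{\Gamma(n+L+|\alpha|)}{\Gamma(n+L)}\int_{\B}f(u)\overline u^\alpha(1-w\overline u)^{-(n+L+|\alpha|)}\,d\nu_L(u),$$
obtained by applying $\partial^\alpha_z$ to the reproducing formula and evaluating at $z=w$, identifies the $j$-th polynomial integral with $\sum_{|\alpha|=j}\partial^\alpha f(w)(z-w)^\alpha/\alpha!$. Summing $0\le j\le k$ recovers the Taylor polynomial.

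For the remainder, the crucial observation is that $E^k_L(z,w,u)$ depends on $u$ only through $\overline u$; it is therefore anti-holomorphic in $u$ and satisfies $R_u E^k_L(z,w,u)=0$. Consequently every factor $1+R_u/(n+L+j)$ of $R^l_{n+L}$ acts as the identity on $E^k_L$, and the Leibnitz rule collapses to $R^l_{n+L}[f(u)E^k_L(z,w,u)]=(R^l_{n+L}f)(u)\,E^k_L(z,w,u)$. Applying Proposition 2.7 with $N=L$ and $k=l$ (with the usual smoothing $f_r(u)=f(ru)$, $r\to 1^-$, to supply the required regularity) to $\varphi(u)=f(u)E^k_L(z,w,u)$ gives
$$I_k(z,w)=\int_{\B}(R^l_{n+L}f)(u)\,E^k_L(z,w,u)\,d\nu_{L+l}(u)=E^k_{L,l}(f)(z,w),$$
after unpacking $d\nu_{L+l}=c_{L+l}(1-|u|^2)^{L+l-1}\,d\nu$.

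The main obstacle is the polynomial-identification step: although purely algebraic, it requires careful bookkeeping of the multinomial and $\Gamma$-factors produced by $\overline{R^{n+L-1}_{1,u}}$ and the normalization of $d\nu_L$. By contrast, the remainder step is clean precisely because the anti-holomorphy of $E^k_L$ in $u$ kills all cross-terms in Leibnitz's formula, leaving a single term that matches the statement verbatim.
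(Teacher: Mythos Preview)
Your argument is correct, but it takes a noticeably longer route than the paper. The paper begins with the already integrated-by-parts reproducing formula
\[
f(z)=c_{L+l}\int_\B R^l_{n+L}f(u)\,\frac{(1-|u|^2)^{L+l-1}}{(1-z\overline u)^{n+L}}\,d\nu(u)
\]
(this is Corollary~\ref{cor:intpartsP} with $N=M=L$, $k=l$, $m=0$), so that the $R^l_{n+L}$ is present from the start and no separate integration-by-parts step is needed for the remainder. More importantly, after inserting the kernel decomposition of Lemma~\ref{lem:TaylorKernel}, the paper does not compute the polynomial integrals at all: it simply observes that the $j$-th term is manifestly a polynomial of degree $j$ in $z-w$ (the operator $\overline{R^{n+L-1}_{1,u}}$ acts only in $\overline u$), while the remainder $E^k_{L,l}(f)(z,w)$ vanishes to order $k+1$ at $z=w$ because of the factor $((z-w)\overline u)^{k+1}$. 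Uniqueness of the degree-$k$ Taylor polynomial then forces the polynomial part to equal $\sum_{j=0}^k\sum_{|\alpha|=j}\frac{1}{\alpha!}\partial^\alpha f(w)(z-w)^\alpha$, with no combinatorics required. Your explicit identification via the pointwise $\Gamma$-factor identity and the derivative-reproducing formula is valid (the identity does check out term by term), and your anti-holomorphy observation for the remainder is a nice way to see why only $R^l_{n+L}f$ survives, but both of these steps are bypassed entirely by the paper's two-line argument.
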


\begin{proof}
Since 
$$
f(z)=c_{L+l}\int_\B R^l_{n+L}f(u)\frac{(1-|u|^2)^{L+l-1}}{(1-z\overline u)^{n+L}}d\nu(u),
$$ 
 Lemma \ref{lem:TaylorKernel} with $m=n+L$  and the uniqueness of the $k$-th order Taylor's polynomial prove the result. 
\end{proof}

\subsection{Integration by parts formulas} 

\begin{prop}\label{prop:intparts}
Let $N,M$ be real numbers satisfying $n+M>1$ and $N>1$.
If $g\in B^1_{-N}$, then
\begin{align*}
\int_{\B}&g(w)\frac{(1-|w|^2)^{N-1}\,(z_j-w_j)}{(1-z\overline w)^{n+M}}d\nu(w)\\
&=\frac{N-M}{n+M-1}\int_{\B}g(w)\frac{(1-|w|^2)^{N-1}\, w_j}{(1-z\overline w)^{n+M-1}}d\nu(w).
\end{align*}
\end{prop}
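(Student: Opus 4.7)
The plan is to reduce the stated formula to two separate integration-by-parts identities and then combine them. To organize the computation, introduce the abbreviations
\[
A:=\int_\B g\,\frac{w_j(1-|w|^2)^{N-1}}{(1-z\overline w)^{n+M-1}}\,d\nu,\qquad
B:=\int_\B g\,\frac{w_j(1-|w|^2)^{N-1}}{(1-z\overline w)^{n+M}}\,d\nu,
\]
\[
C:=\int_\B g\,\frac{(1-|w|^2)^{N-1}}{(1-z\overline w)^{n+M}}\,d\nu,\qquad
A':=\int_\B g\,\frac{w_j(1-|w|^2)^{N-2}}{(1-z\overline w)^{n+M-1}}\,d\nu,
\]
where $A'$ is well defined thanks to $N>1$. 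The claimed identity is $z_jC-B=\frac{N-M}{n+M-1}A$, and my plan is to establish the two relations
\begin{equation*}
(n+M-1)\,z_j\,C=(N-1)\,A' \qquad (\mathrm{I})
\end{equation*}
\begin{equation*}
(N-1)\,A'=(N-M)\,A+(n+M-1)\,B \qquad (\mathrm{II})
\end{equation*}
and then substitute (II) into (I) and subtract $(n+M-1)B$.

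For (I), I would apply Stokes' theorem (in the form $\int_\B\overline{\p}_{w_j}\Phi\,d\nu=0$ for $\Phi$ smooth on $\overline\B$ with $\Phi|_{\bB}=0$) to the auxiliary function $\Phi(w):=g(w)(1-|w|^2)^{N-1}(1-z\overline w)^{-(n+M-1)}$. Since $g$ is holomorphic and $N>1$, $\Phi$ vanishes on $\bB$. Using the elementary derivatives $\overline\p_{w_j}(1-|w|^2)=-w_j$ and $\overline\p_{w_j}(1-z\overline w)=-z_j$, a direct product-rule calculation gives
\[
\overline\p_{w_j}\Phi=-(N-1)g\,\frac{w_j(1-|w|^2)^{N-2}}{(1-z\overline w)^{n+M-1}}+(n+M-1)g\,\frac{z_j(1-|w|^2)^{N-1}}{(1-z\overline w)^{n+M}},
\]
whose vanishing integral is exactly (I). For (II) I would instead invoke Proposition \ref{prop:intpartsphi} with $k=1$, with $N$ replaced by $N-1$, and with test function $\varphi(w):=g(w)w_j(1-z\overline w)^{-(n+M-1)}$. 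Since $g w_j$ is holomorphic,
\[
\overline R\,\varphi=(n+M-1)\,g\,w_j\,\frac{z\overline w}{(1-z\overline w)^{n+M}},
\]
and the algebraic identity $z\overline w=1-(1-z\overline w)$ rewrites this as a clean combination of $(1-z\overline w)^{-(n+M-1)}$ and $(1-z\overline w)^{-(n+M)}$, so that
\[
\overline{R^1_{n+N-1}}\varphi=g\,w_j\!\left[\frac{N-M}{n+N-1}(1-z\overline w)^{-(n+M-1)}+\frac{n+M-1}{n+N-1}(1-z\overline w)^{-(n+M)}\right]\!.
\]
Integrating and using $c_{N-1}/c_N=(N-1)/(n+N-1)$ gives exactly (II).

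The main obstacle is justifying the Stokes step (I) under the sole hypothesis $g\in B^1_{-N}$, since $g$ need not be smooth up to $\bB$. I would handle this in the standard way by replacing $g$ with its dilate $g_\rho(w):=g(\rho w)$, which extends holomorphically across $\overline\B$, proving (I) for each $g_\rho$, and letting $\rho\to 1^-$. For fixed $z\in\B$ the factor $|1-z\overline w|$ stays bounded below, so the four integrals $A,A',B,C$ depend continuously on $g$ in the $B^1_{-N}$ norm and the limiting identity is preserved; the same remark applies to (II) through Proposition \ref{prop:intpartsphi}. All remaining steps are routine algebra.
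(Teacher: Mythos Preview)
Your two-step decomposition via the auxiliary quantity $A'$ is correct in spirit and does yield the identity, but the organization differs from the paper's and there is one slip in the density argument worth flagging.

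The paper carries out the computation in a single stroke by applying the \emph{combined} tangential operator $\overline\p_j-w_j\overline R$ to the kernel $(1-z\overline w)^{-(n+M-1)}$ and to the weight $(1-|w|^2)^{N-1}$; the point is that this operator annihilates the radial part of the weight, producing $-(N-1)w_j(1-|w|^2)^{N-1}$ rather than $-(N-1)w_j(1-|w|^2)^{N-2}$. One integration by parts then gives the result directly, without ever leaving the weight $(1-|w|^2)^{N-1}$. Your route instead applies $\overline\p_{w_j}$ alone (your step (I)), which drops the weight exponent by one and produces the intermediate $A'$, and then invokes Proposition \ref{prop:intpartsphi} (essentially the packaged $\overline R$ integration by parts) to climb back up (your step (II)). Both arguments amount to the same total integration by parts; yours is more modular in that it recycles Proposition \ref{prop:intpartsphi}, while the paper's avoids the singular intermediate weight altogether.

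The slip: you assert that $A'$ is well defined and depends continuously on $g$ in the $B^1_{-N}$ norm. This is not true in general: membership in $B^1_{-N}$ only controls $\int_\B|g(w)|(1-|w|^2)^{N-1}d\nu$, whereas $A'$ needs control of $\int_\B|g(w)|(1-|w|^2)^{N-2}d\nu$, i.e.\ of $\|g\|_{B^1_{-(N-1)}}$, which is strictly stronger. The fix is harmless: for the dilates $g_\rho$ all four quantities are finite, so you may combine (I) and (II) for $g_\rho$ to obtain the $A'$-free identity $(n+M-1)(z_jC_\rho-B_\rho)=(N-M)A_\rho$, and then pass to the limit $\rho\to 1^-$ only in $A,B,C$, for which dominated convergence against $(1-|w|^2)^{N-1}$ is legitimate. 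With that adjustment your proof is complete.
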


\begin{proof}
If $g\in B^1_{-N}$, then for $L>N$ we have 
$$
\sup_{0<r<1} |g(rz)|\le \sup_{0<r<1} |\PP^{L}(g)(rz)|\lesssim |\PP^{L}(g)(z)|\in L^1(d\nu_N).
$$ 
Therefore, by the dominated convergence theorem, it is enough to prove the result for 
functions in $H(\overline \B)$.

An easy computation shows that
\begin{align*}
(\overline\p_j&-w_j\overline R)
\frac{1}{(1-z\overline w)^{n+M-1}}\\
&=(n+M-1)\frac{z_j-w_j}{(1-z\overline w)^{n+M}}+(n+M-1)\frac{w_j}{(1-z\overline w)^{n+M-1}}
\end{align*} 
and
$$
(\overline\p_j-w_j\overline R)(1-|w|^2)^{N-1}=-(N-1)w_j(1-|w|^2)^{N-1}.
$$

Since $N>1$, $(1-|w|^2)^{N-1}$ vanishes on $\bB$ and thus, by integration by parts, we obtain
\begin{align*}
(n+M-1)\int_{\B}&g(w)\frac{(1-|w|^2)^{N-1}\,(z_j-w_j)}{(1-z\overline w)^{n+M}}d\nu(w)\\
=&-(n+M-1)\int_{\B}g(w)\frac{(1-|w|^2)^{N-1}\, w_j}{(1-z\overline w)^{n+M-1}}d\nu(w)\\
&-\int_{\B} \frac{(\overline\p_j-w_j\overline R)\left(g(w)(1-|w|^2)^{N-1}\right)}
{(1-z\overline w)^{n+M-1}}d\nu(w)\\
&+n\int_{\B}g(w)\frac{(1-|w|^2)^{N-1}\, w_j}{(1-z\overline w)^{n+M-1}}d\nu(w)\\
=&(N-M)\int_{\B}g(w)\frac{(1-|w|^2)^{N-1}\, w_j}{(1-z\overline w)^{n+M-1}}d\nu(w),
\end{align*}
which concludes the proof.
\end{proof}

Iterating the above formula we obtain.

\begin{cor} \label{cor:intparts}
Let $N,M$ be real numbers and let  $\alpha$ be a multiindex satisfying  $N>1$ and  $1\le |\alpha|<n+M$. 
If $g\in B^1_{-N}$, then
\begin{align*}
\int_{\B}&g(w)\frac{(1-|w|^2)^{N-1}\,(z-w)^\alpha}{(1-z\overline w)^{n+M}}d\nu(w)\\
&=c_{N,M,|\alpha|}\int_{\B}g(w)\frac{(1-|w|^2)^{N-1}\, w^\alpha}{(1-z\overline w)^{n+M-|\alpha|}}d\nu(w),
\end{align*}
where $c_{N,M,|\alpha|}=\dfrac{(N-M)\cdots(N-M+|\alpha|-1)}{(n+M-1)\cdots(n+M-|\alpha|)}$.
\end{cor}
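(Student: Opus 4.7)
My plan is to prove the corollary by induction on $|\alpha|$, peeling off one factor $(z_j-w_j)$ at a time and applying Proposition \ref{prop:intparts} repeatedly. The base case $|\alpha|=1$ is exactly Proposition \ref{prop:intparts}. For the inductive step, given $\alpha$ with $|\alpha|\geq 2$, I would fix an index $j$ with $\alpha_j\geq 1$, write $\alpha = \alpha' + e_j$, and factor $(z-w)^\alpha = (z_j - w_j)(z-w)^{\alpha'}$. I would then regard $\tilde g(w) := g(w)(z-w)^{\alpha'}$ as a new ``$g$'' (with $z$ treated as a parameter) and apply Proposition \ref{prop:intparts} to get
\begin{align*}
&\int_{\B} g(w)\,\frac{(1-|w|^2)^{N-1}(z-w)^\alpha}{(1-z\overline w)^{n+M}}\,d\nu(w) \\
&\qquad= \frac{N-M}{n+M-1}\int_{\B} g(w)(z-w)^{\alpha'}\,\frac{(1-|w|^2)^{N-1} w_j}{(1-z\overline w)^{n+M-1}}\,d\nu(w).
\end{align*}

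At this stage the integrand has the same shape as before, but with $M$ replaced by $M-1$, with an extra bounded polynomial factor $w_j$, and with multi-index $\alpha'$ of length $|\alpha|-1$. Applying the induction hypothesis to the integral on the right (absorbing the bounded factor $w_j$ into the new ``$g$'') yields the reduction to $\int_{\B} g(w)w^\alpha(1-|w|^2)^{N-1}(1-z\overline w)^{-(n+M-|\alpha|)}d\nu(w)$ and the product of constants
\[
\prod_{i=0}^{|\alpha|-1}\frac{N-M+i}{n+M-1-i} \;=\; \frac{(N-M)(N-M+1)\cdots(N-M+|\alpha|-1)}{(n+M-1)(n+M-2)\cdots(n+M-|\alpha|)},
\]
which is exactly $c_{N,M,|\alpha|}$.

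I would need to check two small hypotheses at each iteration. First, Proposition \ref{prop:intparts} is being applied with $M$ successively replaced by $M, M-1, \ldots, M-|\alpha|+1$; the requirement $n+M > 1$ then becomes $n+M-i > 1$ for $i=0,\ldots,|\alpha|-1$, which is precisely the stated hypothesis $|\alpha| < n+M$. Second, the substituted ``$g$'' at each step is $g(w)$ multiplied by a polynomial in $w$ (with $z$-dependent coefficients); since $|w|\leq 1$ on $\B$, any such polynomial is bounded by a constant $C(z)$, so the product remains in $B^1_{-N}$ as required.

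The argument is essentially routine once the factoring is set up correctly; I do not anticipate a serious obstacle. The only point worth being careful about is that the ``$g$'' function in each application of Proposition \ref{prop:intparts} may depend on the parameter $z$, but this is harmless because the integration is only in the variable $w$ and the $B^1_{-N}$ hypothesis is imposed fiberwise in $w$.
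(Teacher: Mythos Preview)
Your proposal is correct and is precisely what the paper means by ``Iterating the above formula we obtain'': you peel off one linear factor $(z_j-w_j)$ at a time via Proposition~\ref{prop:intparts}, absorbing the remaining polynomial (bounded on $\overline\B$, hence harmless for $B^1_{-N}$ membership) into the function $g$, and the constants multiply exactly to $c_{N,M,|\alpha|}$. The hypothesis $|\alpha|<n+M$ is exactly what is needed so that each successive application (with $M$ replaced by $M,M-1,\dots,M-|\alpha|+1$) satisfies $n+M>1$.
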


\subsection{A fractional Leibnitz's type formula}
Combining the above integration by parts formulas and the Taylor's formula  we can prove 
the next two propositions. 

\begin{prop}\label{prop:formula}
Let $\tilde N>1$, $\tilde M>1-n$, $0<t<\tilde N$ and $g\in B^\infty_{-t}$. Then, for any nonnegative integers
$L,k,l$ satisfying $k<\min\{\tilde N-t,n+\tilde M-1\}$ and  $\tilde N-t<L$,  and $f\in B^1_{k+t-\tilde N}$, we have 
\begin{align*}
f(z) \PN^{\tilde N,\tilde M} (g)(z)=&\sum_{j=0}^k \frac{c_{\tilde N,\tilde M,j}}{j!} \PN^{\tilde N,\tilde M-j}\left(g\,d^j f(R,\overset{(j)}{\cdots},R)\right)(z)\\
&+\tilde Q^{\tilde N,\tilde M,k}(f,g)(z),
\end{align*}
where
\begin{align*}
&\tilde Q^{\tilde N,\tilde M,k}(f,g)(z)\\
&\quad:=c_{L+l}\int_\B R^l_{n+L} f(u)(1-|u|^2)^{L+l-1}K^{\tilde N,\tilde M,k}_L(g)(z,u) d\nu(u)
\end{align*}
\begin{align*}
K^{\tilde N,\tilde M,k}_L(g)(z,u)&:=\int_\B g(w) K^{\tilde N,\tilde M,k}_L(z,w,u)d\nu(w),\\
K^{\tilde N,\tilde M,k}_L(z,w,u)&:=\overline{R^{n+L-1}_{1,u}}
\frac{c_{\tilde N}c_{\tilde N,\tilde M,k+1}(w\overline u)^{k+1}(1-|w|^2)^{\tilde N-1}}
{(1-w\overline u)^{k+1}(1-z\overline w)^{n+\tilde M-k-1}(1-z\overline u)},
\end{align*}
and $d^j f$ denotes the $j$-th differential of $f$, that is 
$d^j\,f(R,\overset{(j)}{\cdots},R)(w)=\sum_{|\alpha|=j}\frac{j!}{\alpha!}w^\alpha\p^\alpha f(w) $.
\end{prop}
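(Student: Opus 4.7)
The plan is to insert the Taylor expansion of Lemma \ref{lem:Taylorf} for $f(z)$ around the integration variable $w$ inside the integral defining $\PN^{\tilde N,\tilde M}(g)(z)$ (multiplied by $f(z)$, which can be pulled under the integral sign since it is $w$-independent), and then transform each resulting $(z-w)^\alpha$ factor by the integration-by-parts formula of Corollary \ref{cor:intparts}. The condition $L>\tilde N-t$ combined with $f\in B^1_{k+t-\tilde N}\subset B^1_{-L}$ legitimates the use of Lemma \ref{lem:Taylorf}, giving
$$f(z)=\sum_{j=0}^k\sum_{|\alpha|=j}\frac{1}{\alpha!}\p^\alpha f(w)(z-w)^\alpha+E^k_{L,l}(f)(z,w).$$

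For the polynomial part, fix $j\in\{0,\dots,k\}$. For each $\alpha$ with $|\alpha|=j$, the hypotheses $\tilde N>1$ and $j\le k<n+\tilde M-1$ permit us to apply Corollary \ref{cor:intparts} with $N=\tilde N$, $M=\tilde M$; this converts $(z-w)^\alpha$ into $w^\alpha$, reduces the exponent of $(1-z\overline w)$ by $j$, and produces the constant $c_{\tilde N,\tilde M,j}$. Summing over $|\alpha|=j$ via the multinomial identity $\sum_{|\alpha|=j}\frac{1}{\alpha!}w^\alpha\p^\alpha f(w)=\frac{1}{j!}\,d^jf(R,\overset{(j)}{\cdots},R)(w)$, this group of polynomial terms collapses to $\sum_{j=0}^{k}\frac{c_{\tilde N,\tilde M,j}}{j!}\PN^{\tilde N,\tilde M-j}\bigl(g\cdot d^jf(R,\overset{(j)}{\cdots},R)\bigr)(z)$, matching the main sum in the statement.

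For the remainder, after inserting the definition of $E^k_{L,l}(f)(z,w)$ and applying Fubini, it remains to identify the inner $w$-integral with $\int_\B g(w)K^{\tilde N,\tilde M,k}_L(z,w,u)\,d\nu(w)$. Since $\overline{R^{n+L-1}_{1,u}}$ acts only in $u$, I pull it outside the $w$-integral and expand $((z-w)\overline u)^{k+1}=\sum_{|\alpha|=k+1}\frac{(k+1)!}{\alpha!}(z-w)^\alpha\overline u^\alpha$. For fixed $u\in\B$ the factor $(1-w\overline u)^{-(k+1)}$ is holomorphic and bounded in $w$, so multiplying it by $g$ preserves $B^1_{-\tilde N}$, and Corollary \ref{cor:intparts} applies with $|\alpha|=k+1<n+\tilde M$; this produces $c_{\tilde N,\tilde M,k+1}$, replaces $(z-w)^\alpha$ by $w^\alpha$, and lowers the $(1-z\overline w)$-exponent by $k+1$. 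Reassembling $\sum_{|\alpha|=k+1}\frac{(k+1)!}{\alpha!}w^\alpha\overline u^\alpha=(w\overline u)^{k+1}$ and pulling $\overline{R^{n+L-1}_{1,u}}$ back inside reproduces $K^{\tilde N,\tilde M,k}_L(z,w,u)$ exactly.

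The main technical obstacle is the justification of the Fubini swap and of commuting $\overline{R^{n+L-1}_{1,u}}$ past the $w$-integration: this should rely on the dual control provided by $g\in B^\infty_{-t}$ (which via $0<t<\tilde N$ and boundedness of the ball places $g$ also in $B^1_{-\tilde N}$), together with uniform estimates of $E^k_L(z,w,u)$ on compact $u$-subsets of $\B$, so that all iterated integrals converge absolutely and differentiation under the integral sign is valid. Once those are in place, the explicit kernel form of the remainder follows from bookkeeping of the multinomial identities above.
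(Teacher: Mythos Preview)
Your proposal is correct and follows essentially the same route as the paper's own proof: insert the Taylor expansion of Lemma \ref{lem:Taylorf} for $f(z)$ about $w$ under the integral, convert each $(z-w)^\alpha$ factor into $w^\alpha$ via Corollary \ref{cor:intparts} (for $|\alpha|\le k$ in the main sum and $|\alpha|=k+1$ in the remainder), and identify the resulting remainder kernel with $K^{\tilde N,\tilde M,k}_L$ after a Fubini swap. The paper treats the Fubini justification in one line (``an easy consequence of Fubini's theorem and the fact that $f\in B^1_{k+t-\tilde N}$'') and does not spell out the commutation of $\overline{R^{n+L-1}_{1,u}}$ with the $w$-integral, so your flagging of those points is, if anything, more careful than the original.
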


\begin{rem}\label{rem:remmf3}
Note that if $N-M$ is a negative integer, 
then the result of the above proposition corresponds to a  Leibnitz type formula.
 For instance, if $\tilde N=2$, $\tilde M=3$ and $k=1$, then $\PN^{2,3}=R^1_{n+2}$ and 
the formula is 
$f R^1_{n+2} g=R^1_{n+2} (fg)-\frac{1}{n+2}gR (f)$. In this case $Q^{2,3,1}(f,g)=0$.

Observe that in the above proposition, 
the index $k$ is upper bounded by a constant depending of $\tilde N, \tilde M$ and $t$, 
and in Theorem \ref{thm:mf} is lower bounded by a constant depending of $N, M$ and $t$. 
This fact seems to be contradictory. However,  
we will apply the above proposition to the case $\tilde N=N+J$ and $\tilde M=M+i$ 
with $J, i$ arbitrarily large.
Therefore the upper boundedness of $k$ in the proposition is not relevant in order to prove   
 Theorem \ref{thm:mf}.
\end{rem} 

\begin{proof} Since $L>\tilde N-t$ we have $B^1_{k+t-\tilde N}\subset B^1_{-L}$, and thus, 
by Lemma \ref{lem:Taylorf}, we have
\begin{align*}
f(z) &\PN^{\tilde N,\tilde M} (g)(z)\\
&=c_{\tilde N}\sum_{j=0}^k\sum_{|\alpha|=j}\frac{1}{\alpha!}
\int_\B \p^\alpha f(w) (z-w)^\alpha g(w)
\frac{(1-|w|^2)^{\tilde N-1}}{(1-z\overline w)^{n+\tilde M}}d\nu(w)\\
&+c_{\tilde N}\int_\B E^k_{L,l}(f)(z,w) g(w)
\frac{(1-|w|^2)^{\tilde N-1}}{(1-z\overline w)^{n+\tilde M}}d\nu(w). 
\end{align*}

By Corollary \ref{cor:intparts}, the first term in the right hand part 
of the equality is equal to 
$$
c_{\tilde N}\sum_{j=0}^k\sum_{|\alpha|=j}\frac{1}{\alpha!}
c_{\tilde N,\tilde M,j}\int_\B \p^\alpha f(w) g(w)
 \frac{(1-|w|^2)^{\tilde N-1}\,w^\alpha}{(1-z\overline w)^{n+\tilde M-j}}d\nu(w),
$$
which coincides with
$$
\sum_{j=0}^k \frac{c_{\tilde N,\tilde M,j}}{j!} 
\PN^{\tilde N,\tilde M-j}\left(g\,d^j\,f(R,\overset{(j)}{\cdots},R)\right)(z)
$$

In order to compute the second term, first note that by Corollary \ref{cor:intparts} we have
\begin{align*}
&\int_\B E^k_{L}(z,w,u) g(w) \PN^{\tilde N,\tilde M}(z,w)d\nu(w)\\
&=c_{\tilde N}\int_B \overline{R^{n+L-1}_{1,u}}\frac{g(w) ((z-w)\overline u)^{k+1}}{(1-z\overline u)(1-w\overline u)^{k+1}} ç
\frac{(1-|w|^2)^{\tilde N-1}}{(1-z\overline w)^{n+\tilde M}}d\nu(w)\\
&=c_{\tilde N}c_{\tilde N,\tilde M,k+1}\int_B \overline{R^{n+L-1}_{1,u}}
\frac{g(w)(w\overline u)^{k+1}}{(1-z\overline u)(1-w\overline u)^{k+1}} ç
\frac{(1-|w|^2)^{\tilde N-1}}{(1-z\overline w)^{n+\tilde M-k-1}}d\nu(w)\\
&=K^{\tilde N,\tilde M,k}_L(g)(z,u).
\end{align*}
Therefore, the identity
\begin{equation}\label{eqn:formula}\begin{split}
&\int_\B E^k_{L,l}(f)(z,w) g(w) \PN^{\tilde N,\tilde M}(z,w)d\nu(w)\\
&=c_{L+l}\int_\B R^l_{n+L} f(u)(1-|u|^2)^{L+l-1}K^{\tilde N,\tilde M,k}_L(g)(z,u) d\nu(u),
\end{split}\end{equation}
will be an easy consequence of Fubini's theorem and the fact that $f\in B^1_{k+t-\tilde N}$. 
\end{proof}

The next proposition provides an estimate of $|(1+R)^m \tilde Q^{\tilde N,\tilde M,k}(f,g)(z)|$ for some especial values of $L,\tilde N,\tilde M,l,k$ and $m$.

\begin{prop} \label{prop:estQ}
Let $\tilde N>1$, $0<t<\tilde N$, $\tilde M>1-n$ and $g\in B^\infty_{-t}$.   
Let also $L,k,l$ and $Q^{\tilde N,\tilde M,k}(f,g)$ as in Proposition \ref{prop:formula}. 

If $0< \tilde N-t-M+k+1<m$ and $f\in B^1_{k+t-\tilde N}$, then  we have: 
\begin{align*}
&|(1+R)^m \tilde Q^{\tilde N,\tilde M,k}(f,g)(z)|\\
&\lesssim \|g\|_{B^\infty_{-t}}\left(\int_\B |R^l_{n+L} f(u)|
\frac{(1-|u|^2)^{\tilde N-t-k+l-1}}{|1-z\overline u|^{n+\tilde M-k+m}} d\nu(u)\right.\\
&\left.+(1-|z|^2)^{k+1-m}\Omega_{N-t-\tilde M}(1-|z|^2)
\int_\B |R^l_{n+L} f(u)|\frac{(1-|u|^2)^{L+l-1}}{|1-z\overline u|^{n+L+k+1}}d\nu(u)\right).
\end{align*}
\end{prop}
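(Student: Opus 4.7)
The plan is to reduce the desired estimate to Lemma~\ref{lem:estP} by explicitly computing the derivatives that appear in $(1+R)^m \tilde Q^{\tilde N,\tilde M,k}(f,g)$. The hypotheses $f\in B^1_{k+t-\tilde N}$ and $g\in B^\infty_{-t}$ make all iterated integrals defining $\tilde Q$ absolutely convergent, so I can differentiate under the integral sign. Since $(1+R)^m$ acts only on $z$, it commutes both with the outer integration in $u$ and with the antiholomorphic operator $\overline{R^{n+L-1}_{1,u}}$ appearing in $K^{\tilde N,\tilde M,k}_L$; therefore the proof reduces to a pointwise bound on $(1+R)^m_z K^{\tilde N,\tilde M,k}_L(z,w,u)$.

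The first step applies $(1+R)^m_z$ to the rational factor $(1-z\overline w)^{-(n+\tilde M-k-1)}(1-z\overline u)^{-1}$ via the Leibnitz rule for the radial derivative $R$, together with the standard estimate $|R^i(1-z\overline w)^{-a}|\lesssim|1-z\overline w|^{-(a+i)}$. The second step distributes $\overline{R^{n+L-1}_{1,u}}$ across the factors $(w\overline u)^{k+1}$, $(1-w\overline u)^{-(k+1)}$ and $(1-z\overline u)^{-(1+m-j)}$ by Leibnitz, with the analogous antiholomorphic pointwise bound. Together these yield
\begin{equation*}
\bigl|(1+R)^m_z K^{\tilde N,\tilde M,k}_L(z,w,u)\bigr|
\lesssim \sum_{j=0}^{m}\sum_{a+b\le n+L-1}\frac{(1-|w|^2)^{\tilde N-1}}{|1-w\overline u|^{k+1+a}\,|1-z\overline w|^{n+\tilde M-k-1+j}\,|1-z\overline u|^{1+m-j+b}}.
\end{equation*}

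Next, I insert $|g(w)|\le \|g\|_{B^\infty_{-t}}(1-|w|^2)^{-t}$ and integrate over $w$. Each inner integral is $I^{\tilde N-t}_{M',L'}(z,u)$ in the notation of Lemma~\ref{lem:estP}, with $M'=k+1+a-n$ and $L'=\tilde M-k-1+j$. The hypothesis $0<\tilde N-t-\tilde M+k+1<m$ says exactly that $L'$ straddles $\tilde N-t$ as $j$ runs from $0$ to $m$, so all regimes of Lemma~\ref{lem:estP} are activated, including the critical case $L'=\tilde N-t$ that produces a logarithm.

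Finally, I recombine the contributions with the factor $|1-z\overline u|^{-(1+m-j+b)}$ left outside the $w$-integral. Choosing $a=n+L-1$ and $b=0$, the ``$u$-side'' term $(1-|u|^2)^{\tilde N-t-M'}|1-z\overline u|^{-(n+L')}$ collapses (after summing over those $j$ with $L'>\tilde N-t$) to the first term of the stated bound. The ``$z$-side'' term $(1-|z|^2)^{\tilde N-t-L'}|1-z\overline u|^{-(n+M')}$ sums geometrically in $j$, producing the dominant factor $(1-|z|^2)^{k+1-m}$ together with either a power or a logarithmic correction dictated by the sign or vanishing of $\tilde N-t-\tilde M$, which is precisely the behaviour encoded in $\Omega_{\tilde N-t-\tilde M}(1-|z|^2)$; this yields the second term. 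The main obstacle is the case-by-case bookkeeping in Lemma~\ref{lem:estP} and verifying that the borderline $\tilde N-t=\tilde M-k-1+j$ is the unique source of the logarithmic contribution.
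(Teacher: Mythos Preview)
Your approach is the paper's: differentiate under the integral, distribute $(1+R)^m_z$ and $\overline{R^{n+L-1}_{1,u}}$ by Leibnitz, insert $|g(w)|\le\|g\|_{B^\infty_{-t}}(1-|w|^2)^{-t}$, and evaluate the resulting $w$-integrals via Lemma~\ref{lem:estP}. The paper, however, does not carry the full sum over $(j,a,b)$: since $|1-z\overline w|,|1-z\overline u|,|1-w\overline u|\lesssim 1$, the elementary bound $x^{-j}y^{-(m-j)}\lesssim x^{-m}+y^{-m}$ collapses everything to four ``corner'' integrals $I_1,\dots,I_4$, corresponding to putting all $m$ $z$-derivatives on one of the two $z$-dependent factors and all $n+L-1$ $\overline u$-derivatives on one of the two $\overline u$-dependent singular factors. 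This keeps the exponents fed to Lemma~\ref{lem:estP} strictly away from transitional values and makes the case analysis short.

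Your diagnosis of the logarithm is off. The factor $\Omega_{\tilde N-t-\tilde M}$ does \emph{not} come from the crossover $L'=\tilde N-t$ between the first two regimes of Lemma~\ref{lem:estP}; it comes from the \emph{last} regime, applied to the corner $I_4$ in which all $m$ $z$-derivatives and all $n+L-1$ $\overline u$-derivatives hit $(1-z\overline u)^{-1}$. There $M'=k+1-n$ and $L'=\tilde M-k-1$ are both below $\tilde N-t$, and since $n+M'+L'=\tilde M$, Lemma~\ref{lem:estP} returns $\Omega_{\tilde N-t-\tilde M}(|1-z\overline u|)$, with a logarithm exactly when $\tilde N-t=\tilde M$. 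One then uses $1-|z|^2\lesssim|1-z\overline u|$ together with $\tilde N-t-\tilde M+k+1-m<0$ to pass from $\Omega(|1-z\overline u|)/|1-z\overline u|^{n+L+m}$ to $(1-|z|^2)^{k+1-m}\Omega(1-|z|^2)/|1-z\overline u|^{n+L+k+1}$. In the four-corner reduction the index $L'$ never equals $\tilde N-t$, so the borderline you highlight is simply not visited; if you insist on keeping all intermediate $j$, you would have to handle that borderline separately, which the paper's organization avoids.
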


\begin{proof}
Since 
$$
K^{\tilde N,\tilde M,k}_L(z,w,u)=\overline{R^{n+L-1}_{1,u}}
\frac{c_{\tilde N}c_{\tilde N,\tilde M,k+1}(w\overline u)^{k+1}(1-|w|^2)^{\tilde N-1}}
{(1-w\overline u)^{k+1}(1-z\overline w)^{n+\tilde M-k-1}(1-z\overline u)},
$$
an easy computation shows that 
\begin{align*}
&|(1+R)^m K^{\tilde N,\tilde M,k}_L(f,g)(z)|\\
&=\left|(1+R_z)^m \int_\B g(w)\, K^{\tilde N,\tilde M,k}_L(z,w,u) d\nu(w)\right|\\
&\lesssim \frac{\|g\|_{B^\infty_{-t}}}{|1-z\overline u|}\int_\B\frac{(1-|w|^2)^{\tilde N-t-1}}
{|1-w\overline u|^{n+L+k}|1-z\overline w|^{n+\tilde M-k-1+m}}d\nu(w)\\
&+ \frac{\|g\|_{B^\infty_{-t}}}{|1-z\overline u|^{m+1}}\int_\B\frac{(1-|w|^2)^{\tilde N-t-1}}
{|1-w\overline u|^{n+L+k}|1-z\overline w|^{n+\tilde M-k-1}}d\nu(w)\\
&+ \frac{\|g\|_{B^\infty_{-t}}}{|1-z\overline u|^{n+L}}\int_\B\frac{(1-|w|^2)^{\tilde N-t-1}}
{|1-w\overline u|^{k+1}|1-z\overline w|^{n+\tilde M-k-1+m}}d\nu(w)\\
&+ \frac{\|g\|_{B^\infty_{-t}}}{|1-z\overline u|^{n+L+m}}\int_\B\frac{(1-|w|^2)^{\tilde N-t-1}}
{|1-w\overline u|^{k+1}|1-z\overline w|^{n+\tilde M-k-1}}d\nu(w)\\
&:=I_1+I_2+I_3+I_4.
\end{align*}

Lemma \ref{lem:estP} and the hypotheses  $k<\min\{\tilde N-t,n+\tilde M-1\}$, 
$\tilde N-t<L$,  if $0< \tilde N-t-\tilde M+k+1<m$, we have 
$\max\{\tilde M-k-1,k\}<\tilde N-t<\min\{\tilde M,L+k,\tilde M-k-1+m\}$ gives
\begin{align*}
I_1+I_2+I_3
&\lesssim\|g\|_{B^\infty_{-t}}\left(\frac{(1-|u|^2)^{\tilde N-t-L-k}}
{|1-z\overline u|^{n+\tilde M-k+m}}
+\frac{(1-|z|^2)^{\tilde N-t-\tilde M+k+1-m}}{|1-z\overline u|^{n+L+k+1}}\right)\\
&\qquad+\|g\|_{B^\infty_{-t}}
\frac{(1-|u|^2)^{\tilde N-t-L-k}}{|1-z\overline u|^{n+\tilde M-k+m}}\\
&\qquad+\|g\|_{B^\infty_{-t}}
\frac{(1-|z|^2)^{\tilde N-t-\tilde M+k+1-m}}{|1-z\overline u|^{n+L+k+1}}\\
&\lesssim \|g\|_{B^\infty_{-t}}
\left(\frac{(1-|u|^2)^{\tilde N-t-L-k}}{|1-z\overline u|^{n+\tilde M-k+m}} 
+\frac{(1-|z|^2)^{\tilde N-t-\tilde M+k+1-m}}{|1-z\overline u|^{n+L+k+1}} \right)
\end{align*}
The last inequality is a consequence of $1-|u|^2\le 2 |1-z\overline u|$. 

The last integral depends of the sign of $\tilde N-t-\tilde M$ and in this case Lemma \ref{lem:estP} gives
\begin{align*}
I_4&\lesssim \|g\|_{B^\infty_{-t}}\frac{\Omega_{\tilde N-t-\tilde M}
(|1-z\overline u|)}{|1-z\overline u|^{n+L+m}}
&\lesssim \|g\|_{B^\infty_{-t}}\frac{(1-|z|^2)^{k+1-m}
\Omega_{\tilde N-t-\tilde M}(1-|z|^2))}{|1-z\overline u|^{n+L+k+1}}.
\end{align*}

 Since $(1-|z|^2)^{\tilde N-t-\tilde M}\le \Omega_{\tilde N-t-\tilde M}(1-|z|^2)$, 
 \begin{align*}
 I_1+I_2+I_3+I_4&\lesssim \|g\|_{B^\infty_{-t}}
 \frac{(1-|u|^2)^{\tilde N-t-L-k}}{|1-z\overline u|^{n+\tilde M-k+m}} \\
 &+\|g\|_{B^\infty_{-t}}\frac{(1-|z|^2)^{k+1-m}
 \Omega_{\tilde N-t-\tilde M}(1-|z|^2))}{|1-z\overline u|^{n+L+k+1}},
\end{align*}
 we ends the proof.
\end{proof}

\subsection{Proof of Theorem \ref{thm:mf}}
By Proposition \ref{prop:intpartsphi} for any nonnegative integer $J$ there exist constants $a_i$, $0\le i\le J$ such that 
\begin{align*}
\PN^{N,M}(g)(z)&=c_{N+J}\int_\B g(w)\overline{R^J_{n+N}}\frac{1}{(1-z\overline w)^{n+M}} (1-|w|^2)^{N+J-1}d\nu(w)\\
&=\sum_{i=0}^{J}a_i \PN^{N+J,M+i}(g)(z).
\end{align*}

In order to prove to prove Theorem \ref{thm:mf}, we apply Propositions \ref{prop:formula} and 
\ref{prop:estQ} to the terms $f\PN^{N+J,M+i}(g)$ with  $k+1-n-M<i\le J$. 
 Since  $f\PN^{N+J,M+i}(g)$ satisfy  the conditions in Proposition \ref{prop:formula}, we have 
\begin{align*}
f \PN^{N+J,M+i} (g)=&\sum_{j=0}^k \frac{c_{N+J,M+i,j}}{j!} 
\PN^{N+J,M+i-j}\left(g\,d^j\,f(R,\overset{(j)}{\cdots},R)\right)\\
&+\tilde Q^{N+J,M+i,k}(f,g).
\end{align*}

Defining 
$$
Q^{N,M,k}(f,g)(z):=\sum_{k+1-n-M<i\le J} \tilde Q^{N+J,M+i,k}(f,g)(z)
$$
we obtain formula \eqref{eqn:mf}.

The estimate of $|(1+R)^m Q^{N,M,k}(f,g)|$ given in Theorem \ref{thm:mf} 
follows from the estimates  of $|(1+R)^m \tilde Q^{N+J,M+i,k}(f,g)|$, for $k+1-n-M<i\le J$. 
By proposition \ref{prop:estQ}, all these estimates are bounded for the one corresponding to the case $i=J$, which coincides with  the one stated in Theorem \ref{thm:mf}.

\end{document}